\documentclass[11pt]{amsart}
\usepackage{titletoc}
\usepackage{setspace}
\usepackage{mathtools}
\usepackage[margin=1in]{geometry}

\textwidth=6.6truein
\mathsurround=1pt
\textheight=8.7truein
\topmargin -10pt \headheight 10pt 
\let\p\partial
\let\rh\hookrightarrow
\setlength{\parindent}{1cm}
\setlength{\parskip}{0cm}
\numberwithin{equation}{section}

\textheight=9.0truein
\frenchspacing 
\setlength{\oddsidemargin}{0pt} 
\setlength{\evensidemargin}{0pt}
\pagestyle{myheadings}

\usepackage{amssymb,latexsym,amsfonts}
\usepackage{pb-diagram}
\usepackage{graphicx}
\usepackage{hyperref}
\usepackage{verbatim}
\usepackage{color} 
\usepackage[matrix,arrow]{xy}
\usepackage{marginnote}
\usepackage{tikz}

\usetikzlibrary{matrix}

\newcommand{\Ric}{\mathrm{Ric}}
\newcommand{\tr}{\mathrm{tr}}
\newcommand{\vol}{\mathrm{Vol}}
\newcommand{\spt}{\mathrm{spt}}

\theoremstyle{plain}

\newtheorem{lemma}{Lemma}
\newtheorem{theorem}{Theorem}
\newtheorem{cor}{Corollary}
\newtheorem{claim}{Claim}

\newtheorem*{m-lemma}{Main Lemma}

\newtheorem{step}{Step}
\theoremstyle{definition}
\newtheorem{definition}{Definition}
\newtheorem*{rem}{Remark}

\pagestyle{myheadings} \markboth{\hfill \mytitle \hfill}{\hfill 
  \myauthor  \hfill}

\bibliographystyle{elsarticle-num}

\newcommand{\Z}{{\mathbf Z}}

\begin{document}

\vspace*{-5mm}

\title [Minimimal hypersurfaces and psc-bordism] {Minimimal
  hypersurfaces and bordism of \\ positive scalar
  curvature metrics}
\author{Boris Botvinnik}
\address{
Department of Mathematics\\
University of Oregon \\
Eugene, OR, 97405\\
USA
}
\email{botvinn@uoregon.edu}

\author{Demetre Kazaras}
\address{
Department of Mathematics\\
University of Oregon \\
Eugene, OR, 97405\\
USA
}
\email{demetre@uoregon.edu}

\subjclass[2000]{53C27, 57R65, 58J05, 58J50}

\keywords{Positive scalar curvature metrics, minimal surfaces with
  free boundary, conformal Laplacian.}  \date{\today}
\begin{abstract}
Let $(Y,g)$ be a compact Riemannian manifold of positive scalar
curvature (psc).  It is well-known, due to Schoen-Yau, that any closed
stable minimal hypersurface of $Y$ also admits a psc-metric. We
establish an analogous result for stable minimal hypersurfaces with
free boundary. Furthermore, we combine this result with tools
from geometric measure theory and conformal geometry to study
psc-bordism.  For instance, assume $(Y_0,g_0)$ and
$(Y_1,g_1)$ are closed psc-manifolds equipped with stable minimal
hypersurfaces $X_0 \subset Y_0$ and $X_1\subset Y_1$.  Under natural
topological conditions, we show that a psc-bordism $(Z,\bar g) :
(Y_0,g_0)\rightsquigarrow (Y_1,g_1)$ gives rise to a psc-bordism
between $X_0$ and $X_1$ equipped with the psc-metrics given by
the Schoen-Yau construction.
\end{abstract}  
\maketitle
\renewcommand{\baselinestretch}{1.3}\normalsize
\vspace*{-10mm}

\begin{small}
\tableofcontents
\end{small}
\newpage

\section{Introduction}\label{sec1}
\subsection{Schoen-Yau minimal hypersurface technique}
For a Riemannian metric $g$ on a smooth manifold, we denote by $R_g$ the scalar curvature
function and by $H_{g}$ the mean curvature of the boundary (if it
is not empty). The Schoen-Yau minimal hypersurface technique
\cite{SY79} provides well-known geometric obstructions to the
existence of positive scalar curvature. Here is the first fundamental
result:
\begin{theorem}\label{thm1}
  {\rm \cite[Proof of Theorem 1]{SY79}}
Let $(Y,g)$ be a compact Riemannian manifold with $R_g>0$, and $\dim Y
=n \geq 3$. Let $X\subset Y$ be a smoothly embedded stable minimal hypersurface
with trivial normal bundle.  Then $X$ admits a metric $\tilde h$ with
$R_{\tilde h}>0$. Furthermore, the metric $\tilde h$ could be chosen to be conformal
to the restriction $g|_X$.
\end{theorem}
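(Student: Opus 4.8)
The plan is to feed the second-variation (stability) inequality for $X$ into the traced Gauss equation, conclude that the conformal Laplacian of $(X,g|_X)$ is a positive operator, and then obtain $\tilde h$ from the standard conformal-change recipe. Write $h:=g|_X$ and $m:=\dim X=n-1$. Since the normal bundle of $X$ is trivial, $X$ carries a global unit normal field $\nu$; let $A$ be the second fundamental form, so minimality means $H:=\tr_h A=0$. The stability of $X$ asserts that
\[
\int_X |\nabla\phi|^2\, dA \;\ge\; \int_X \big(|A|^2 + \Ric_g(\nu,\nu)\big)\phi^2\, dA \qquad\text{for all } \phi\in C^\infty(X),
\]
where $\nabla$ and $dA$ are taken with respect to $h$.

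\textbf{The key estimate.} First I would eliminate $\Ric_g(\nu,\nu)$ by the traced Gauss equation, which with $H=0$ reads $R_h = R_g - 2\Ric_g(\nu,\nu) - |A|^2$, i.e.\ $2\Ric_g(\nu,\nu) = R_g - R_h - |A|^2$. Substituting into the stability inequality and rearranging gives, for every $\phi\in C^\infty(X)$,
\[
\int_X \Big(|\nabla\phi|^2 + \tfrac12 R_h\,\phi^2\Big)\, dA \;\ge\; \tfrac12\int_X \big(R_g + |A|^2\big)\phi^2\, dA \;\ge\; \tfrac12\int_X R_g\,\phi^2\, dA,
\]
and the last integral is strictly positive whenever $\phi\not\equiv 0$ since $R_g>0$. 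The coefficient $\tfrac12$ in front of $R_h$ is exactly what makes this estimate useful.

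\textbf{Conclusion when $m\ge 3$.} Here the conformal Laplacian is $L_h=-c_m\Delta_h+R_h$ with $c_m=\tfrac{4(m-1)}{m-2}>2$. Because $c_m>2$, the key estimate yields
\[
\int_X \big(c_m|\nabla\phi|^2 + R_h\phi^2\big)\, dA \;\ge\; 2\int_X \Big(|\nabla\phi|^2 + \tfrac12 R_h\phi^2\Big)\, dA \;>\; 0
\]
for every $\phi\not\equiv 0$, so the first eigenvalue $\lambda_1(L_h)$ is positive. Taking a positive first eigenfunction $u$ and setting $\tilde h:=u^{4/(m-2)}h$, the transformation law $R_{\tilde h}=u^{-(m+2)/(m-2)}L_h u=\lambda_1\, u^{-4/(m-2)}>0$ produces the desired psc-metric conformal to $g|_X$.

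\textbf{Conclusion when $m=2$, and the main difficulty.} For surfaces, apply the key estimate to a function $\phi$ equal to $1$ on one connected component $X'\subseteq X$ and $0$ on the rest (this is smooth): since $\nabla\phi=0$ and $R_h=2K_h$, we get $\int_{X'} K_h\, dA \ge \tfrac12\int_{X'} R_g\, dA>0$ for every component, so by Gauss--Bonnet each component has positive Euler characteristic and hence is $S^2$ or $\mathbb{RP}^2$; the uniformization theorem then supplies a conformal metric of constant positive curvature, and these assemble into $\tilde h$. The only genuinely delicate point in the whole argument is pinning down the sign conventions in the second-variation formula and the Gauss equation so that $R_h$ emerges with coefficient precisely $\tfrac12$, which is what lets the comparison with $c_m>2$ (resp.\ the Gauss--Bonnet computation) go through; once that bookkeeping is settled, the remainder is standard elliptic theory for $m\ge 3$ and two-dimensional conformal geometry for $m=2$.
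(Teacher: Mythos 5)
Your argument is correct and is essentially the same as the one the paper relies on (the Schoen--Yau computation, which the paper reproduces verbatim in its proof of Theorem \ref{thm3}): plug the traced Gauss equation into the stability inequality, conclude positivity of the conformal Laplacian's first eigenvalue, and conformally rescale by the first eigenfunction. Your separate treatment of $\dim X=2$ via Gauss--Bonnet and uniformization is also the standard (and necessary) supplement for $n=3$, matching the original Schoen--Yau argument the theorem cites.
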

We note that Theorem \ref{thm1} is proven by analyzing the conformal
Laplacian of the hypersurface $X$. It it crucial that $X$ is
stable minimal.  For arbitrary $(Y,g)$ it is a
non-trivial (and possibly obstructed) problem to find a stable minimal hypersurface. 
However, in low dimensions, geometric measure theory can 
provide a source of stable minimal hypersurfaces.
\begin{theorem}\label{thm2}
  {\rm (See \cite[Chapter 8]{Morgan}, \cite[Theorem 5.4.15]{F})}
  Let $(Y,g)$ be a compact orientable Riemannian manifold with $3\leq
  \dim Y=n\leq 7$. Assume $\alpha \in H_{n-1}(Y;\Z)$ is a nontrivial
  element. Then there exists a smoothly embedded hypersurface $X\subset
  Y$ such that
\begin{enumerate}
\item[(i)] up to multiplicity, $X$ represents the class $\alpha$;  
\item[(ii)] $X$ minimizes volume among all hypersurfaces which represent
  $\alpha$ up to multiplicity. In particular, the hypersurface $X$ is stable minimal.
 \end{enumerate}
\end{theorem}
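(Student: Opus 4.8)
The plan is to carry out the direct method of the calculus of variations in the space of integral currents, and then to invoke the interior regularity theory for codimension-one mass minimizers; the hypothesis $n\le 7$ will enter at exactly one point — the dimension bound on the singular set — and cannot be relaxed, because of the Simons cone in $\mathbf R^8$.

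First I would represent $\alpha$ by an integral cycle: since integral $(n-1)$-currents compute $H_{n-1}(Y;\Z)$ (Federer--Fleming), there is an integral current $T_0$ with $\partial T_0=0$ whose homology class is $\alpha$. I then set
\[
m \;=\; \inf\bigl\{\,\mathbf{M}(T)\ :\ T\ \text{an integral }(n-1)\text{-cycle in }Y,\ [T]=\alpha\,\bigr\},
\]
which is finite, and positive because a cycle of zero mass is the zero current, which does not represent $\alpha\neq 0$. Given a minimizing sequence $T_i$ with $\mathbf{M}(T_i)\to m$ and $\partial T_i=0$, compactness of $Y$ and the uniform mass bound let me apply the Federer--Fleming compactness theorem and extract a subsequence converging in the flat topology to an integral current $T$ with $\partial T=0$. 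Lower semicontinuity of mass gives $\mathbf{M}(T)\le\liminf_i\mathbf{M}(T_i)=m$, and the homology class is locally constant in the flat topology on integral cycles (a cycle of sufficiently small flat norm bounds, by the isoperimetric inequality for integral currents), so $[T]=\alpha$. Hence $\mathbf{M}(T)=m$: the current $T$ minimizes mass in its homology class.

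Next I would quote the interior regularity theory for mass-minimizing integral hypersurfaces (De Giorgi, Fleming, Almgren, Simons; see \cite[Chapter 8]{Morgan}, \cite[Theorem 5.4.15]{F}): $\spt T$ is a smooth embedded minimal hypersurface away from a closed singular set of Hausdorff dimension at most $n-8$. Since $n\le 7$, this singular set is empty, so $X:=\spt T$ is a smoothly embedded closed hypersurface. Because $T$ has no boundary, the constancy theorem gives $T=\sum_j k_j[X_j]$ with $k_j\in\Z\setminus\{0\}$ and $X_j$ the connected components of $X$; in codimension one a mass-minimizing cycle is locally a perimeter-minimizing boundary, whose reduced boundary has multiplicity one, so each $|k_j|=1$, and $X$, suitably oriented, represents $\alpha$ on the nose — in particular it represents $\alpha$ up to multiplicity, which is (i). For (ii), every hypersurface representing $\alpha$ up to multiplicity supports an integral cycle in the class $\alpha$, and $T$ was chosen of least mass among all such cycles; hence $X$ has least volume. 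Finally, mass-minimality forces the first variation of area along every compactly supported normal field to vanish and the second variation to be nonnegative, so $X$ is a stable minimal hypersurface; and since $Y$ is orientable and $X$ is an oriented hypersurface, its normal bundle is trivial, so $X$ is precisely of the type to which Theorem \ref{thm1} applies.

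The existence half of this argument is soft — it uses only compactness and lower semicontinuity in the integral-current category together with the fact that integral currents compute integral homology — so the step I expect to be the real obstacle, were one to reprove the theorem from scratch, is the dimension estimate on the singular set: monotonicity, tangent-cone analysis, and the Almgren--Simons dimension reduction, resting ultimately on Simons' classification of stable minimal hypercones in low dimensions. Its conclusion is sharp in ambient dimension $8$, which is why the hypothesis $\dim Y\le 7$ cannot be dropped.
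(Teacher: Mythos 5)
Your proposal is correct in outline and is essentially the argument the paper is pointing to: Theorem \ref{thm2} is quoted, not proved, in the paper (the citations to Morgan, Ch.~8 and Federer, 5.4.15 stand in for exactly the direct method in integral currents plus codimension-one interior regularity that you describe), and your dimension count — singular set of an $(n-1)$-dimensional minimizer has Hausdorff dimension at most $n-8$, hence empty for $n\le 7$ — is the right place where the hypothesis enters.

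Two small over-claims are worth correcting, though neither damages the statement as given. First, your assertion that each $|k_j|=1$ is not justified: a homologically mass-minimizing codimension-one cycle can carry multiplicity (e.g.\ on a flat torus the class $2[\gamma]$ admits the multiplicity-two circle as a minimizer); the local decomposition into multiplicity-one minimizing boundaries does not prevent their supports from coinciding. This is precisely why the theorem is phrased ``up to multiplicity,'' and $T=\sum_j k_j[X_j]$ with $X=\spt T$ already gives (i) without any multiplicity-one claim. Second, your deduction of (ii) silently identifies $\mathbf{M}(T)$ with $\vol(X)$: for a comparison hypersurface $X'$ representing $\alpha$ only with multiplicities $|k'_j|>1$ you get $\vol(X)\le \mathbf{M}(T)\le \sum_j |k'_j|\vol(X'_j)$, which is the weighted (mass) inequality rather than $\vol(X)\le\vol(X')$ literally; this weighted sense — minimality of mass in the class, i.e.\ ``homologically volume minimizing'' — is exactly how the paper uses the theorem later, so it is the right statement to prove, but you should state it that way rather than as an unweighted volume comparison. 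The stability and trivial-normal-bundle remarks at the end are fine.
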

There are several important results based on Theorems \ref{thm1}
and \ref{thm2}. In particular, this gives a geometric proof that the
torus $T^n$ does not admit a metric of positive scalar curvature for
$n\leq 7$, see \cite{SY79}. This method was also crucial to 
provide first counterexample to the Gromov-Lawson-Rosenberg
conjecture, see \cite{Sch98}.  In this paper we extend these ideas
and techniques to the case of manifolds with boundary.

\subsection{Stable minimal hypersufaces with free boundary}
Let $(M,\bar g)$ be a manifold with non-empty boundary $\p M$ and
$W\subset M$ be an embedded hypersurface. We say that a hypersurface 
$W$ is \emph{properly embedded}
if, in addition, $\p W = \p M\cap W$.  Such a hypersurface $W\subset M$
is \emph{stable minimal with free boundary} if $W$ is a local minimum
of the volume functional among properly embedded
hypersurfaces, see Section \ref{sec:minintro}. We establish the following 
analogue of Theorem \ref{thm1} for
manifolds with boundary in Section \ref{sec:thm3}.
\begin{theorem}\label{thm3}
  Let $(M,\bar g)$ be a compact Riemannian manifold with non-empty
  boundary $\p M$, $R_{\bar g}>0$, $H_{\bar g}\equiv 0$,
  and $\dim M =n+1 \geq 3$. Let $W\subset M$ be an embedded 
  stable minimal
  hypersurface with free boundary and trivial normal bundle.
  Then $W$ admits a metric $\tilde h$ with $R_{\tilde h}>0$ and $H_{\tilde
    h}\equiv 0$. Furthermore, the metric $\tilde h$ could be chosen to
  be conformal to the restriction $\bar g|_W$.
\end{theorem}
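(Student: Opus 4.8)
The plan is to follow the proof of Theorem~\ref{thm1}: reduce the existence of $\tilde h$ to a first eigenvalue problem for a conformally covariant operator, now carrying a Robin-type boundary condition, and deduce positivity of that eigenvalue from the free-boundary stability inequality. Write $n=\dim W$ and assume first that $n\ge 3$. For a positive function $u$ on $W$, set $\tilde h=u^{4/(n-2)}\,\bar g|_W$. The usual conformal transformation laws give
\[
R_{\tilde h}=u^{-\frac{n+2}{n-2}}\bigl(-a_n\Delta u+R_{\bar g|_W}\,u\bigr),\qquad
H_{\tilde h}=c_n\,u^{-\frac{n}{n-2}}\Bigl(\partial_\mu u+\tfrac{n-2}{2(n-1)}\,\mathcal H\,u\Bigr),
\]
where $a_n=\frac{4(n-1)}{n-2}$, $c_n>0$, $\Delta$ is the Laplacian of $\bar g|_W$, $\mu$ is the outward unit conormal of $\partial W$ in $(W,\bar g|_W)$, and $\mathcal H=\tr\,\mathrm{II}_{\partial W\subset W}$ with respect to $\mu$. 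Hence it suffices to find $u>0$ and $\lambda>0$ solving $-a_n\Delta u+R_{\bar g|_W}u=\lambda u$ in $W$ and $\partial_\mu u+\tfrac{n-2}{2(n-1)}\mathcal H\,u=0$ on $\partial W$; one then takes $u=u_1$, the first eigenfunction of this Robin problem, which exists and is positive by standard elliptic theory (minimize the associated quadratic form, replace $u_1$ by $|u_1|$, apply elliptic regularity up to $\partial W$, and use the strong maximum principle together with the Hopf lemma for the Robin condition), and $\tilde h=u_1^{4/(n-2)}\bar g|_W$ then has $R_{\tilde h}=\lambda_1 u_1^{-4/(n-2)}>0$ and $H_{\tilde h}\equiv 0$. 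Since $a_n\cdot\frac{n-2}{2(n-1)}=2$, integration by parts identifies
\[
\lambda_1=\inf_{\phi\in C^\infty(W),\ \phi\not\equiv 0}\ \frac{\int_W\bigl(a_n|\nabla\phi|^2+R_{\bar g|_W}\phi^2\bigr)\,dV+2\int_{\partial W}\mathcal H\,\phi^2\,dS}{\int_W\phi^2\,dV},
\]
so everything reduces to a lower bound for this Rayleigh quotient.

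To obtain it I would combine three facts. First, because $W$ is stable minimal with free boundary and $\partial M$ is minimal ($H_{\bar g}\equiv 0$), the second variation of area is nonnegative: for all $\phi\in C^\infty(W)$,
\[
\int_W\Bigl(|\nabla\phi|^2-\bigl(|\mathrm{II}_W|^2+\Ric_M(\nu,\nu)\bigr)\phi^2\Bigr)\,dV-\int_{\partial W}\mathrm{II}_{\partial M}(\nu,\nu)\,\phi^2\,dS\ \ge\ 0,
\]
where $\nu$ is the unit normal of $W$ in $M$ (tangent to $\partial M$ along $\partial W$ by the free-boundary condition), $\mathrm{II}_W$ is the second fundamental form of $W\subset M$, and $\mathrm{II}_{\partial M}$ that of $\partial M\subset M$ with respect to the outward unit normal of $M$. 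Second, the Gauss equation for $W\subset M$ and $H_W\equiv 0$ give $\Ric_M(\nu,\nu)=\tfrac12\bigl(R_{\bar g}-R_{\bar g|_W}-|\mathrm{II}_W|^2\bigr)$, so $|\mathrm{II}_W|^2+\Ric_M(\nu,\nu)=\tfrac12 R_{\bar g}-\tfrac12 R_{\bar g|_W}+\tfrac12|\mathrm{II}_W|^2$. Third --- and this is where $H_{\bar g}\equiv 0$ is used --- a short computation with the second fundamental forms along $\partial W$, using that the outward conormal $\mu$ of $\partial W$ in $W$ coincides there with the outward unit normal of $M$ (precisely the orthogonality in the free-boundary condition), gives $\tr\,\mathrm{II}_{\partial M}=\mathcal H+\mathrm{II}_{\partial M}(\nu,\nu)$ along $\partial W$; as $H_{\bar g}\equiv 0$ forces $\tr\,\mathrm{II}_{\partial M}\equiv 0$, this reads $\mathrm{II}_{\partial M}(\nu,\nu)=-\mathcal H$.

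Substituting the second and third facts into the stability inequality and multiplying by $2$ yields, for all $\phi\in C^\infty(W)$,
\[
\int_W\bigl(2|\nabla\phi|^2+R_{\bar g|_W}\phi^2\bigr)\,dV+2\int_{\partial W}\mathcal H\,\phi^2\,dS\ \ge\ \int_W\bigl(|\mathrm{II}_W|^2+R_{\bar g}\bigr)\phi^2\,dV\ \ge\ \Bigl(\min_W R_{\bar g}\Bigr)\int_W\phi^2\,dV,
\]
where $\min_W R_{\bar g}>0$ since $R_{\bar g}>0$ and $W$ is compact. As $a_n>2$, the numerator of the Rayleigh quotient above dominates its left-hand side, so $\lambda_1\ge\min_W R_{\bar g}>0$; this proves the theorem for $n\ge 3$. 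The case $n=2$ ($\dim M=3$) is handled identically using the operator $-2\Delta+R_{\bar g|_W}$ with Robin condition $\partial_\mu u+\mathcal H\,u=0$: the same (dimension-free) doubled stability inequality shows its first eigenvalue $\lambda_1$ is positive with positive first eigenfunction $u_1$, and then $\tilde h:=u_1^2\,\bar g|_W$ satisfies $H_{\tilde h}=0$ and, by $R_{\tilde h}=u_1^{-2}(R_{\bar g|_W}-2\Delta\log u_1)$ and the eigenvalue equation, $R_{\tilde h}=u_1^{-2}\bigl(\lambda_1+2|\nabla\log u_1|^2\bigr)>0$.

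I expect the bulk of the work to be the boundary bookkeeping: verifying the identity $\mathrm{II}_{\partial M}(\nu,\nu)=-\mathcal H$ and fixing the sign of the boundary term in the free-boundary second variation, so that the doubled stability inequality carries boundary coefficient exactly $2$ --- which is precisely the constant $a_n\cdot\frac{n-2}{2(n-1)}=2$ coming from integration by parts in the Robin eigenvalue problem. One also needs $W$ to be smooth up to $\partial W$ for the elliptic theory, which is part of the standing hypotheses on free-boundary stable minimal hypersurfaces in Section~\ref{sec:minintro}.
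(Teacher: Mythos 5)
Your proposal is correct and takes essentially the same route as the paper: the free-boundary stability inequality, rewritten via the Gauss equation and the identity $A^{\partial M}(\nu^W,\nu^W)=-H^{\partial W}$ forced by $H_{\bar g}\equiv 0$, yields positivity of the principal eigenvalue of the conformal Laplacian with the minimal (Robin) boundary condition, and the conformal change by the positive first eigenfunction produces $\tilde h$. The only differences are cosmetic normalization (your $a_n=c_n^{-1}$, plus the quantitative bound $\lambda_1\geq\min_W R_{\bar g}$) and your explicit treatment of $\dim W=2$ via a logarithmic conformal factor, a case the paper's $\phi^{4/(n-2)}$ formulas leave untreated.
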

The proof of Theorem \ref{thm3} is similar to the case of closed manifolds.
In particular, we have to analyze the
conformal Laplacian on $W$ with \emph{minimal boundary conditions}.
This boundary condition works well with the free boundary stability assumption.

For a compact oriented $(n+1)$-dimensional manifold $M$, we
consider the relative integral homology group $H_n(M,\p M;\Z)$. Let $\bar
\alpha\in H_{n}(M,\p M;\Z)$ be a non-trivial class which we may assume to be 
represented by a properly embedded hypersurface $W\subset M$.
We notice that the boundary $\p W$ (which may possibly be empty) represents
the class $\p(\bar \alpha)\in H_{n-1}(\p M;\Z)$, where $\p$ is the
connecting homomorphism in the exact sequence
\begin{equation}\label{eq2}
  \cdots\to H_{n}(\p M;\Z) \to H_{n}(M;\Z)
  \to H_{n}(M,\p M;\Z)\xrightarrow[]{\p }
H_{n-1}(\p M;\Z) \to \cdots
\end{equation}
There is an analog of Theorem \ref{thm2} which relies on a different regularity
result, see Appendix \ref{sec:thm4} for more details.
\begin{theorem}
{\rm{(See \cite[Theorem 5.2]{G})}}\label{thm4}
  Let $(M,\bar g)$ be a compact orientable Riemannian manifold with
  non-empty boundary $\p M$ and $3\leq \dim M=n+1\leq 7$.  Assume $\bar
\alpha \in H_{n}(M, \p M;\Z)$ is a nontrivial element. Then there
exists a smooth properly embedded hypersurface $W\subset M$ such that
\begin{enumerate}
\item[(i)] up to multiplicity, $W$ represents the class $\bar \alpha$;  
\item[(ii)] $W$ minimizes volume with respect to $\bar g$ 
among all hypersurfaces which represent $\bar \alpha$ up to multiplicity. 
  In particular, $W$ is stable minimal with free boundary.
\end{enumerate}
\end{theorem}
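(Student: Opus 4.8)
\emph{Proof strategy.} The plan is to carry out the geometric-measure-theoretic minimization underlying Theorem \ref{thm2}, now in the \emph{relative} setting, and to close it off with a boundary-regularity theorem in place of (or rather, in addition to) the interior regularity that suffices when $\partial M=\emptyset$. First I would set up the variational problem in the space of $n$-dimensional integral currents $T$ in $M$ whose boundary $\partial T$ is carried by $\partial M$: each such $T$ is a relative cycle and determines a class $[T]\in H_n(M,\partial M;\Z)$. Fixing a smooth properly embedded representative $W_0$ of $\bar\alpha$, set $c=\inf\mathbf{M}(T)$ over all integral currents $T$ with $[T]=\bar\alpha$; this infimum is finite since $W_0$ provides a competitor. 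Given a minimizing sequence $T_i$, the one point that is not a verbatim repetition of the closed case is that the Federer--Fleming compactness theorem requires a uniform bound on $\mathbf{M}(\partial T_i)$ as well as on $\mathbf{M}(T_i)$; this is arranged by a standard deformation argument in a collar $\partial M\times(-\varepsilon,0]$ of $\partial M$ (pushing $\partial T_i$ onto $\partial M$ with controlled mass), or, alternatively, by minimizing equivariantly on the double $DM$. Compactness together with lower semicontinuity of mass then yields an integral current $T$ with $[T]=\bar\alpha$ and $\mathbf{M}(T)=c$, which is therefore area-minimizing in its relative homology class, and in particular among properly embedded hypersurfaces representing $\bar\alpha$ up to multiplicity.

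Next I would establish regularity. Away from $\partial M$ the current $T$ is an area-minimizing integral hypersurface in the open $(n+1)$-manifold $M\setminus\partial M$, so by the standard interior regularity theory for area-minimizing hypersurfaces (De~Giorgi, Almgren, Simons, Federer) its singular set has Hausdorff dimension at most $n-7$, hence is empty because $\dim M=n+1\le 7$, i.e. $n\le 6$. Near $\partial M$ the current $T$ satisfies a free-boundary minimization property, and at this point one invokes the boundary-regularity result \cite[Theorem 5.2]{G} --- whose hypotheses include precisely the dimension bound $\dim M\le 7$, and which is discussed in Appendix \ref{sec:thm4} --- to conclude that $\spt T$ is a smooth properly embedded hypersurface-with-boundary $W\subset M$ meeting $\partial M$ orthogonally along $\partial W=W\cap\partial M$. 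The constancy theorem, applied on each connected component of $W$, shows that $T$ is, on that component, the current of integration with a constant integer multiplicity; thus $W$ represents $\bar\alpha$ up to multiplicity, which is (i). (Since $M$ is orientable, such a $W$ is automatically two-sided, hence has trivial normal bundle, as is needed to feed $W$ into Theorem \ref{thm3}.)

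Finally, (ii) is essentially formal: being a global volume-minimizer in the relative class $\bar\alpha$, $W$ is a fortiori a local minimizer of the volume functional among properly embedded hypersurfaces, hence stable minimal with free boundary --- the first variation vanishes over admissible deformations (so $W$ is minimal and orthogonal to $\partial M$) and the second variation is nonnegative. The crux of the whole argument, and the reason both for the dimension hypothesis and for the attribution to \cite{G}, is the boundary-regularity step; the interior regularity and the variational existence are routine transcriptions of the closed-manifold proof of Theorem \ref{thm2}, the only extra care being the mass bound on $\partial T_i$ flagged above.
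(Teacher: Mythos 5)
Your proposal is correct and follows essentially the same route as the paper: existence of a mass-minimizing integral current in the relative class via integral-current homology (the paper's Lemma \ref{lem:mincurrent}, which handles the compactness/boundary-mass point you flag by Nash-embedding $M$ and quoting the result of \cite[Section 5.1.6]{F}), followed by Gr\"uter's free-boundary regularity theorem \cite{G} adapted to a Riemannian ambient metric (Theorem \ref{thm:reg}), where the hypothesis $\dim M\leq 7$ forces the singular set to be empty, and then elliptic regularity, the constancy theorem, and the standard first/second variation remarks give smoothness, representation up to multiplicity, and free-boundary stability. The only cosmetic difference is that you split regularity into classical interior theory plus boundary regularity and sketch the compactness fix by hand, whereas the paper packages both into the cited references.
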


\subsection{Positive scalar curvature bordism and minimal hypersurfaces} 
The main result of this paper is an application of Theorems \ref{thm3} and 
\ref{thm4} to provide new obstructions for psc-metrics to be psc-bordant.
\begin{definition}\label{def01}
Let $(Y_0,g_0)$ and $(Y_1,g_1)$ be closed oriented $n$-dimensional
manifolds with psc-metrics. Then $(Y_0,g_0)$ and $(Y_1,g_1)$ are
{\emph{psc-bordant}} if there is a compact oriented
$(n+1)$-dimensional manifold $(Z,\bar g)$ such that
\begin{enumerate}
\item[$\bullet$] the manifold $Z$ is an oriented bordism between $Y_0$
  and $Y_1$, i.e., $\p Z=Y_0\sqcup -Y_1$;
\item[$\bullet$] $\bar g$ is a psc-metric which restricts to $g_i+dt^2$
  near the boundary $Y_i\subset \p Z$ for $i=0,1$.
\end{enumerate}
We write $(Z,\bar g): (Y_0,g_0) \rightsquigarrow (Y_1,g_1)$
for a psc-bordism as above.
\end{definition}
\begin{rem}
Sometimes we consider bordisms $(Z,\bar g): (Y_0,g_0)
\rightsquigarrow (Y_1,g_1)$ as above where the metrics do not
necessarily have positive scalar curvature.  However, we always
assume that the metric $\bar g$ restricts to a product metric near the
boundary.
\end{rem}  
Now we would like to enrich the psc-bordism relation with an extra
structure, namely with a choice of homology classes $\alpha_i\in
H_{n-1}(Y_i;\Z)$, $i=0,1$.  Recall the following elementary
observation.

Let $\alpha\in H_{n-1}(Y;\Z)$, where $Y$ is an oriented closed
$n$-dimensional manifold. Then the cohomology class $D\alpha\in H^1(Y;\Z)$
Poincare-dual to $\alpha$ can be represented by a smooth map $\gamma:
Y \to B\Z= S^1$. Furthermore, we can assume that a given point $s_0\in
S^1$ is a regular value for $\gamma$. It is easy to see that the
inverse image $X_{\gamma}:=\gamma^{-1}(s_0)\subset Y$ is an embedded
hypersurface which represents the homology class $\alpha$.

If $M$ is an oriented $(n+1)$-dimensional manifold
with a map $\bar \gamma : M \to
S^1$, let $\gamma : \p M \to S^1$ be the restriction $\bar \gamma|_{\p M}$. 
There is a simple relation between the
classes $[\bar \gamma]\in H^1(M;\Z)$ and $ [\gamma]\in H^1(\p M;\Z)$:
\begin{lemma}\label{lemma01}
Let $\bar \alpha\in H_n(M,\p M;\Z)$ and $\alpha\in H_{n-1}(\p M;\Z)$
be Poincare dual to the classes $[\bar \gamma]\in H^1(M;\Z)$ and
$[\gamma]\in H^1(\p M;\Z)$. Then $\p(\bar \alpha)= \alpha$, where $\p:
H_{n}(M,\p M;\Z)\to H_{n-1}(\p M;\Z)$ is the connecting homomorphism.
In particular, if $W=\bar\gamma^{-1}(s_0)\subset M$ is a smooth
properly embedded hypersurface representing $\bar\alpha$, then the
boundary $\p W$ represents the class $\alpha$.
\end{lemma}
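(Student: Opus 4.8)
The cleanest route is via the naturality of Poincar\'e--Lefschetz duality. Write $D_M\colon H^1(M;\Z)\to H_n(M,\partial M;\Z)$, $u\mapsto u\cap[M,\partial M]$, for the duality isomorphism of the pair (recall $\dim M=n+1$), and $D_{\partial M}\colon H^1(\partial M;\Z)\to H_{n-1}(\partial M;\Z)$, $v\mapsto v\cap[\partial M]$, for Poincar\'e duality of the closed manifold $\partial M$. By definition $\bar\alpha=D_M[\bar\gamma]$ and $\alpha=D_{\partial M}[\gamma]$, and $[\gamma]=i^*[\bar\gamma]$, where $i\colon\partial M\hookrightarrow M$ is the inclusion. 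First I would invoke the standard compatibility of the cap product with connecting maps, namely $\partial\bigl(u\cap[M,\partial M]\bigr)=(i^*u)\cap\partial[M,\partial M]$ for $u\in H^1(M;\Z)$, which holds (up to the usual sign) once $\partial M$ carries the induced orientation, so that $\partial[M,\partial M]=[\partial M]$. This says precisely that
\[
D_{\partial M}\circ i^*\;=\;\partial\circ D_M .
\]
Evaluating both sides on $[\bar\gamma]$ gives $\partial(\bar\alpha)=\partial\bigl(D_M[\bar\gamma]\bigr)=D_{\partial M}\bigl(i^*[\bar\gamma]\bigr)=D_{\partial M}[\gamma]=\alpha$, as claimed.

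Alternatively --- and more in the spirit of the surrounding discussion --- I would argue geometrically. Using Sard's theorem, choose $s_0\in S^1$ to be a regular value of $\bar\gamma$ and, simultaneously, of its restriction $\gamma=\bar\gamma|_{\partial M}$; then $W:=\bar\gamma^{-1}(s_0)$ is a properly embedded oriented hypersurface with $\partial W=W\cap\partial M=\gamma^{-1}(s_0)$. The standard description of duality via the classifying map $S^1=B\Z=K(\Z,1)$ --- the Poincar\'e dual of the pullback of a generator of $H^1(S^1;\Z)$ is represented by the preimage of a regular value --- yields $[W]=D_M[\bar\gamma]=\bar\alpha$ in $H_n(M,\partial M;\Z)$ and $[\partial W]=[\gamma^{-1}(s_0)]=D_{\partial M}[\gamma]=\alpha$ in $H_{n-1}(\partial M;\Z)$. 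On the other hand, the connecting homomorphism of the pair sends the fundamental class of the manifold-with-boundary $W$ to the fundamental class of its boundary, i.e. $\partial[W]=[\partial W]$. Combining the three identities gives $\partial(\bar\alpha)=\partial[W]=[\partial W]=\alpha$, and the ``in particular'' clause is precisely the content of this chain of equalities.

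There is no real obstacle here --- the lemma is elementary --- but two bookkeeping points merit a moment's care. First, the sign in the cap-product identity of the first paragraph: a mismatched orientation convention on $\partial M$ (or on the coorientation of $W$, or on $S^1$) would flip it, so I would pin down every orientation through the single convention $\partial[M,\partial M]=[\partial M]$ and check consistency once. Second, the existence of the common regular value $s_0$ in the second paragraph: one needs $s_0$ regular for $\bar\gamma$ \emph{as a map of manifolds-with-boundary}, which means regular both for $\bar\gamma$ and for $\gamma$, and this is immediate since each of the two critical-value sets is Lebesgue-null in $S^1$. Everything else is the standard Poincar\'e--Lefschetz package.
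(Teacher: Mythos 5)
Your argument is correct, and in fact the paper offers no proof of this lemma at all --- it is stated as an elementary observation --- so your two derivations (naturality of Poincar\'e--Lefschetz duality, $\partial\circ D_M = D_{\partial M}\circ i^*$ with $\partial[M,\partial M]=[\partial M]$, and the regular-value/preimage argument with $\partial[W,\partial W]=[\partial W]$ pushed forward by naturality of the connecting homomorphism) are exactly the standard justification the authors leave implicit. The only care needed is the orientation/sign bookkeeping you already flag, which is resolved by fixing the convention $\partial[M,\partial M]=[\partial M]$ and the induced orientations on $W$ and $\partial W$.
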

\begin{definition}\label{bordism-alpha}
Let $(Y_0,g_0)$ and $(Y_1,g_1)$ be closed oriented $n$-dimensional
Riemannian manifolds with given maps $\gamma_0: Y_0\to S^1$ and
$\gamma_1: Y_1\to S^1$. We say that the triples $(Y_0,g_0,\gamma_0)$
and $(Y_1,g_1,\gamma_1)$ are \emph{bordant} if there exists a bordism
$(Z,\bar g): (Y_0,g_0) \rightsquigarrow (Y_1,g_1)$ and a map $\bar
\gamma : Z \to S^1$ such that $\bar \gamma|_{Y_i}= \gamma_i$ for
$i=0,1$.

If the metrics $g_0$, $g_1$ and $\bar g$ are psc-metrics, we say that
the triples $(Y_0,g_0,\gamma_0)$ and $(Y_1,g_1,\gamma_1)$ are
\emph{psc-bordant}.
In both cases we use the notation $(Z,\bar g,\bar \gamma):
(Y_0,g_0,\gamma_0) \rightsquigarrow (Y_1,g_1,\gamma_1)$ for such a
bordism.
\end{definition}
\begin{theorem}\label{thm5}
Let $(Y_0,g_0)$ and $(Y_1,g_1)$ be closed oriented connected
$n$-dimensional manifolds with psc-metrics, $3\leq n\leq 7$, and maps $\gamma_0
: Y_0\to S^1$ and $\gamma_1 : Y_1\to S^1$. Assume that
$(Y_0,g_0,\gamma_0)$ and $(Y_1,g_1,\gamma_1)$ are psc-bordant.

Then there exists a psc-bordism $(Z,\bar g,\bar \gamma):
(Y_0,g_0,\gamma_0) \rightsquigarrow (Y_1,g_1,\gamma_1)$ and a properly
embedded hypersurface $W\subset Z$ such that
\begin{enumerate}
\item[(i)] the hypersurface $W$ represents the class $\bar\alpha\in
  H_n(Z,\p Z;\Z)$ Poincare-dual to $[\bar \gamma]\in
  H^1(Z;\Z)$;
\item[(ii)] the hypersurface $X_i := \p W\cap Y_i\subset Y_i$
  represents the class $\alpha_i\in H_{n-1}(Y_i;\Z)$ 
  Poincare-dual to $[\gamma_i]\in H^1(Y_i;\Z)$, $i=0,1$;
\item[(iii)] there exists a metric $\bar h$ on $W$ such that $R_{\bar
  h}>0$ and $H_{\bar h}\equiv 0$ along $\p W$, and $R_{h_i}>0$, where
  $h_i=\bar h|_{X_i}$, in particular, $(W,\bar h): (X_0,h_0)
  \rightsquigarrow (X_1,h_1)$ is a psc-bordism;
\item[(iv)] the metric $\bar h$ on $W$ could be chosen to be conformal
  to the restriction $\bar g|_{W}$.
\end{enumerate}
\end{theorem}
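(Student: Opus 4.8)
\emph{Step 1: normalizing the bordism and constructing $W$.} The plan is to produce $W$ as a volume-minimizing hypersurface inside a suitably normalized representative of the given psc-bordism, then apply Theorem~\ref{thm3} to build the conformal psc metric $\bar h$ on $W$, and finally transfer positivity of the scalar curvature to the boundary slices $X_i$ using the product structure of $\bar g$ near $\partial Z$. Concretely, begin with a psc-bordism $(Z,\bar g,\bar\gamma):(Y_0,g_0,\gamma_0)\rightsquigarrow(Y_1,g_1,\gamma_1)$. Since $\bar g$ is a product $g_i+dt^2$ on a collar $Y_i\times[0,\varepsilon)$, the boundary $\partial Z=Y_0\sqcup-Y_1$ is totally geodesic and $H_{\bar g}\equiv 0$; after homotoping $\bar\gamma$ rel $\partial Z$ we may also assume $\bar\gamma$ is independent of the collar parameter near $\partial Z$, which changes neither $[\bar\gamma]\in H^1(Z;\Z)$ nor the underlying bordism. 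Let $\bar\alpha\in H_n(Z,\partial Z;\Z)$ be Poincar\'e dual to $[\bar\gamma]$. Theorem~\ref{thm4}, applied in the ambient dimension $\dim Z=n+1$ (the borderline case $n=7$ needs the regularity statement discussed in Appendix~\ref{sec:thm4}), produces a smooth properly embedded hypersurface $W\subset Z$ minimizing volume in its class, hence stable minimal with free boundary; being oriented in the oriented manifold $Z$, it has trivial normal bundle. By Lemma~\ref{lemma01} and the exact sequence~\eqref{eq2}, $[\partial W]=\partial\bar\alpha$, so $X_i:=\partial W\cap Y_i$ represents $\alpha_i=\mathrm{PD}[\gamma_i]$; this gives (i) and (ii). One technical point here is the ``up to multiplicity'' clause in Theorem~\ref{thm4}: since $\bar\alpha$ is already represented by the embedded hypersurface $\bar\gamma^{-1}(s_0)$, one arranges the minimizer to have multiplicity one so that these homological conclusions are literal.

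\emph{Step 2: the boundary slices, and cylindricity of $W$ near $\partial Z$.} On $Y_i\times[0,\varepsilon)$ the metric is the product $g_i+dt^2$, so $\nabla_{\partial_t}\partial_t=0$. For the free-boundary minimal hypersurface $W$ meeting the totally geodesic $\partial Z$ orthogonally, comparing second fundamental forms shows that along $\partial W$ the mean curvature of $X_i$ in $Y_i$ equals $H_W|_{\partial W}=0$, and that $\partial W$ is minimal in $W$; in particular the minimal boundary condition for the conformal Laplacian on $W$ is the Neumann condition $\partial_\nu u=0$. Since $\partial_t$ is Killing and $\langle\partial_t,\nu_W\rangle$ is a Jacobi field on $W$ vanishing to first order along $\partial W$, unique continuation forces $W$ to be $\partial_t$-invariant near $\partial Z$, i.e.\ $W\cap(Y_i\times[0,\varepsilon))=X_i\times[0,\varepsilon)$ with product metric $g_i|_{X_i}+dt^2$; and moving $\partial W$ within $\partial Z$ and using that $W$ minimizes volume shows $X_i$ is stable minimal in $(Y_i,g_i)$. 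Thus Theorem~\ref{thm1} applies to $X_i\subset Y_i$: the conformal Laplacian of $(X_i,g_i|_{X_i})$ has positive first eigenvalue with a positive first eigenfunction (for $n=3$ this just says $X_i$ is a $2$-sphere and the analysis below is replaced by uniformization).

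\emph{Step 3: the conformal psc metric and its boundary restriction.} Theorem~\ref{thm3}, applied to $W\subset(Z,\bar g)$, yields $u>0$ on $W$ --- the first Neumann eigenfunction of the conformal Laplacian of $\bar g|_W$, with positive eigenvalue $\lambda_1$ by stability --- such that $\bar h:=u^{4/(n-2)}\,\bar g|_W$ has $R_{\bar h}>0$ and $H_{\bar h}\equiv 0$ on $\partial W$; this is (iv) and the first half of (iii). It remains to see $R_{h_i}>0$ for $h_i=\bar h|_{X_i}=(u|_{X_i})^{4/(n-2)}\,g_i|_{X_i}$. On the cylindrical collar $u$ solves $-\tfrac{4(n-1)}{n-2}(\Delta_{X_i}+\partial_t^2)u+R_{g_i|_{X_i}}u=\lambda_1 u$ with $\partial_t u|_{t=0}=0$, so an even reflection in $t$ exhibits $X_i$ as a reflection-symmetric, hence totally geodesic, hypersurface of the psc manifold $\bigl(X_i\times(-\varepsilon,\varepsilon),\,u^{4/(n-2)}(g_i|_{X_i}+dt^2)\bigr)$, with induced metric $h_i$. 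Feeding this picture, together with the positivity of the conformal Laplacian on $(X_i,g_i|_{X_i})$ from Step~2, into the Schoen--Yau-type conformal computation --- and using the elementary inequality $\tfrac{4(n-2)}{n-3}>\tfrac{4(n-1)}{n-2}$ between the conformal-Laplacian coefficients in dimensions $n-1$ and $n$, for which the sub-unit rewriting $(u|_{X_i})^{4/(n-2)}=\bigl((u|_{X_i})^{(n-3)/(n-2)}\bigr)^{4/(n-3)}$ supplies a favorable gradient term --- one concludes $R_{h_i}>0$. Finally, since $(W,\bar h)$ now has $R_{\bar h}>0$, $H_{\bar h}\equiv 0$, and $R_{h_i}>0$ along a product collar, a standard bending of $\bar h$ near $\partial W$ makes it an exact product $h_i+dt^2$ there while keeping $R_{\bar h}>0$; then $(W,\bar h):(X_0,h_0)\rightsquigarrow(X_1,h_1)$ is a psc-bordism in the sense of Definition~\ref{def01}, which proves (iii).

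\emph{The main obstacle.} The heart of the argument is the transfer in Step~3 --- showing that the restriction to $X_i$ of the Schoen--Yau conformal metric built on the ambient minimal hypersurface $W$ is again psc, rather than merely conformal to the a priori non-psc metric $g_i|_{X_i}$; a psc manifold with minimal boundary need not have psc boundary, so the product structure of $\bar g$ near $\partial Z$, the cylindricity of $W$, and the conformal positivity on the slices must all be used together. By comparison, the bookkeeping in Steps~1--2 (the multiplicity clause in Theorem~\ref{thm4}, the cylindrical structure of $W$, the stability of the slices), the invocation of Theorem~\ref{thm3}, and the concluding bending near $\partial W$ are routine.
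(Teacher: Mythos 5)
Your Step 2 contains the fatal gap, and it is precisely the point the paper identifies as the main difficulty. From the free boundary condition you only know that $\phi=\bar g(\partial_t,\nu^W)$ vanishes \emph{on} $\partial W$; unique continuation for the Jacobi equation would require vanishing Cauchy data (both $\phi$ and its normal derivative), which you do not have, so you cannot conclude that $W$ is $\partial_t$-invariant near $\partial Z$. Indeed the claim is false: the upper half of a catenoid in the product half-space $\mathbb{R}^2\times[0,\infty)$ is a free-boundary minimal surface meeting the totally geodesic boundary orthogonally, yet it is not a cylinder over its boundary circle. Likewise, the fact that $W$ minimizes volume in $(Z,\partial Z)$ gives no variational information about $\partial W$ inside $Y_i$: in general $X_i=\partial W\cap Y_i$ is neither minimal nor stable in $(Y_i,g_i)$, so Theorem~\ref{thm1} cannot be applied to it. Since your Step 3 (the transfer of positivity to the slices, already only sketched via a reflection and an unexplained coefficient inequality) rests entirely on this cylindricity and on the slices being stable minimal, the argument collapses at exactly the point flagged in the introduction: ``$\partial W$ is, in general, not a minimal representative of $\partial\bar\alpha$.''

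The paper's route is different and is what the statement's freedom to \emph{choose} the psc-bordism is for: one attaches longer and longer product collars $Y\times[-i,0]$ to $Z$, takes homologically volume-minimizing representatives $W_i\subset Z_i$, and proves the Main Lemma of Section~\ref{sec3} (via the coarea comparison of Claims 1--2, the Schoen--Simon curvature estimates, and graphical compactness) that $(Z_i,W_i)$ subconverges in the Cheeger--Gromov sense to the cylinder $X\times(-\infty,0]$ over the unique closed minimizer $X\subset Y$. Then $\partial W_{a_i}$ is $C^k$-close to $X$, positivity of the boundary conformal class is transferred by the eigenvalue-continuity Lemma~\ref{lem:thm5}, Theorem~\ref{thm3} gives positivity of the relative conformal data on $W_{a_i}$, and the Akutagawa--Botvinnik theorem (Theorem~\ref{thm:AB}) produces the psc-metric with product ends, rather than your ad hoc ``bending.'' Two further inaccuracies in your Step 1: for $n=7$ the ambient dimension is $8$, outside the range of Theorem~\ref{thm4}, so one needs the generic-regularity perturbation of the metric (Appendix~\ref{sec:dim8}, after N.~Smale), not just the regularity discussion of Appendix~\ref{sec:thm4}; and uniqueness of the closed minimizer $X$ (arranged by a small conformal change, \cite[Lemma 1.3]{Smale93}) is an essential input to the Main Lemma, with no counterpart in your setup.
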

\begin{rem}
The psc-bordism $(Z,\bar g,\bar \gamma)$ and
hypersurface $W$ may be chosen so that $\p W$ is arbitrarily $C^k$-close
to a desired homologically volume minimizing representative of 
$\alpha_0-\alpha_1$ for any $k$ and $i=0,1$.
\end{rem}
Recall few definitions. We say that a conformal class $C$ of metrics
is \emph{positive} if it contains a metric with positive scalar
curvature. It is equivalent to the condition that the Yamabe constant
$Y(X;C)>0$. Now let $W$ be a bordism with $\p W = X_0\sqcup X_1$, and
$C_0$, $C_1$ be positive conformal classes on $X_0$, $X_1$
respectively. Then we say that the conformal manifolds $(X_0,C_0)$ and
$(X_1,C_1)$ are positively conformally cobordant if the relative
Yamabe invariant $Y(W,X_0\sqcup X_1; C_0\sqcup C_1)>0$, see Section
\ref{sec4} for details. In these terms, the remark following Theorem \ref{thm5} 
can be used to show the following: 
\begin{cor}\label{corollary}
Let $(Y_0,g_0,\gamma_0)$ and $(Y_1,g_1,\gamma_1)$ be as in Theorem
\ref{thm5}. Assume $X_i\subset Y_i$ are volume minimizing hypersurfaces
representing homology classes Poincar\`e-dual to $[\gamma_i]\in H^1(X_i;\Z)$,
$i=0,1$. Then the conformal manifolds $(X_0,[g_0|_{X_0}])$ and
$(X_1,[g_1|_{X_1}])$ are positively conformally cobordant.
\end{cor}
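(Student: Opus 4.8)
The plan is to derive the corollary from Theorem \ref{thm5} together with the remark that follows it. The key point is that Theorem \ref{thm5} produces a psc-bordism $(W,\bar h): (X_0, h_0) \rightsquigarrow (X_1, h_1)$ where $h_i = \bar h|_{X_i}$ and $\bar h$ is conformal to $\bar g|_W$; in particular $h_i$ is conformal to $g_i|_{X_i}$. Since $\bar h$ has $R_{\bar h} > 0$ and $H_{\bar h} \equiv 0$ along $\partial W$, the relative Yamabe invariant $Y(W, X_0 \sqcup X_1; [h_0] \sqcup [h_1])$ is positive — this is essentially the definition recalled just before the corollary, applied to the conformal class of $\bar h$ on $W$. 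Thus $(X_0, [h_0])$ and $(X_1, [h_1])$ are positively conformally cobordant. What remains is to identify $[h_i]$ with $[g_i|_{X_i}]$ as conformal classes on the \emph{given} volume-minimizing representatives $X_i \subset Y_i$ of the corollary's statement.

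The subtlety is that the hypersurfaces $X_i = \partial W \cap Y_i$ produced by Theorem \ref{thm5} need not literally coincide with the volume-minimizing representatives fixed in the corollary; they are only homologous to them. Here is where the remark following Theorem \ref{thm5} enters: it guarantees that the bordism $(Z, \bar g, \bar \gamma)$ and the hypersurface $W$ can be chosen so that $\partial W \cap Y_i$ is arbitrarily $C^k$-close to any prescribed homologically volume-minimizing representative of $\alpha_i$. Taking this representative to be the given $X_i \subset Y_i$, we obtain hypersurfaces $C^k$-close (hence, for $k$ large, isotopic via an ambient diffeomorphism $C^k$-close to the identity) to $X_i$. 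First I would invoke this to arrange that $\partial W \cap Y_i$ equals $\phi_i(X_i)$ for a diffeomorphism $\phi_i$ of $Y_i$ isotopic to the identity; pulling the metric $h_i$ back by $\phi_i$ gives a metric on $X_i$ conformal to $\phi_i^* (g_i|_{\phi_i(X_i)})$, which is isometric — in particular conformal — to a metric in $[g_i|_{X_i}]$ after a further small adjustment. Since the relative Yamabe invariant is a conformal bordism invariant insensitive to such diffeomorphisms, positivity is preserved.

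I expect the main obstacle to be precisely this last identification: making rigorous that the $C^k$-closeness of $\partial W \cap Y_i$ to $X_i$ upgrades to a statement about conformal classes on $X_i$ itself, and that the positivity of the relative Yamabe invariant is stable under the resulting perturbation. One clean way to handle this is to observe that positivity of $Y(W, \partial W; \cdot)$ for the conformal structure induced by $\bar h$ is an \emph{open} condition in an appropriate topology, and that replacing $X_i$ by a $C^k$-close hypersurface (equivalently, composing the collar identification with a $C^k$-small diffeomorphism) changes the induced conformal class on the boundary by an arbitrarily small amount; hence positivity persists. Alternatively, one avoids perturbation altogether by noting that the relative Yamabe invariant depends only on the diffeomorphism type of the pair and the conformal classes, so it suffices that $(\partial W \cap Y_i, [h_i])$ is conformally diffeomorphic to $(X_i, [g_i|_{X_i}])$, which follows once the two hypersurfaces are isotopic in $Y_i$ and $h_i \in [g_i|_{\partial W \cap Y_i}]$. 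Either route reduces the corollary to Theorem \ref{thm5}, its following remark, and the definition of positive conformal cobordism recalled above; no genuinely new analysis is needed.
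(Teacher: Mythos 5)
Your overall strategy---invoke Theorem \ref{thm5} and the remark after it, observe that the metric $\bar h$ (conformal to $\bar g|_W$, with $R_{\bar h}>0$ and $H_{\bar h}\equiv 0$) forces the relative Yamabe invariant of $(W,\p W)$ to be positive, and then transfer the statement from the hypersurfaces $\p W\cap Y_i$ to the prescribed minimizers $X_i$---is the route the paper intends (the paper gives no detailed proof, only the sentence preceding the corollary), and the first half is fine once you note that positivity of the infimum is not ``essentially the definition'' but follows from the conformal invariance of the sign of the principal eigenvalue of the minimal boundary problem; the paper itself uses this equivalence when it deduces $Y(W_{a_i},\p W_{a_i};[\bar h_{a_i}])>0$ from Theorem \ref{thm3}. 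The trouble is your last step, and both of your proposed ways of handling it have gaps. The ``alternative route'' is false as stated: the fact that $\p W\cap Y_i$ is isotopic to $X_i$ in $Y_i$ does not make $(\p W\cap Y_i,[g_i|_{\p W\cap Y_i}])$ conformally diffeomorphic to $(X_i,[g_i|_{X_i}])$; pulling back by the small ambient diffeomorphism only yields a conformal class on $X_i$ that is \emph{close} to $[g_i|_{X_i}]$, not equal to it, so no perturbation is avoided.

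The openness route, as you set it up, is circular in its quantifiers. The threshold ``how small a change of boundary conformal class preserves positivity'' depends on the bordism $(W,[\bar g|_W])$; but the only mechanism the remark provides for making $\p W$ closer to $X_0\sqcup X_1$ is to change the bordism (pass to $W_{a_i}$ with longer collars), and nothing in your argument gives a lower bound for the eigenvalue of the minimal boundary problem that is uniform along this sequence---a priori the admissible perturbation size could shrink faster than the achieved closeness. The way to close this is to measure all smallness against the \emph{fixed} limit supplied by the Main Lemma, namely the Cheeger--Gromov convergence $(\p W_{a_i},h_{a_i})\to (X,g|_X)$. For example: fix once and for all a psc metric $m\in[g|_X]$, written $m=\varphi\cdot(g|_X)$ with a fixed positive conformal factor $\varphi$; for each large $i$, Theorems \ref{thm3} and \ref{thm:AB} give a psc metric on $W_{a_i}$ which is a product $h'_{a_i}+dt^2$ near $\p W_{a_i}$ with $h'_{a_i}\in[h_{a_i}]$ psc; transplant by the convergence diffeomorphisms $\Phi_i:X\to\p W_{a_i}$, use that the psc metrics inside a fixed conformal class form a connected (indeed convex, in the conformal factor) set to isotope $\Phi_i^*h'_{a_i}$ to $\varphi\cdot\Phi_i^*h_{a_i}$ through psc metrics, note $\varphi\cdot\Phi_i^*h_{a_i}\to m$ so that for $i$ large a further linear path stays psc, convert the psc isotopy into a psc concordance, glue it onto the bordism, and apply Theorem \ref{thm:AB} once more with boundary classes $[g_0|_{X_0}]\sqcup[g_1|_{X_1}]$. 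Some argument of this kind (or a Lemma \ref{lem:thm5}--type eigenvalue-convergence argument for the minimal boundary problem on the bordisms with interpolation collars attached) is genuinely needed; the bare appeal to openness does not complete the proof.
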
  
The first step in the proof of Theorem \ref{thm5} is to apply Theorem
\ref{thm4} to $\bar\alpha$, obtaining a minimal representative
$W$. The main difficulty is that $\p W$ is, in general, not a minimal
representative of $\p \bar\alpha$ and so we may not apply Theorem
\ref{thm1} to conclude that $\p W$ even admits a psc-metric.  However,
in Section \ref{sec3} we prove the Main Lemma, which states that $\p
W$ becomes closer to minimizing $\p\bar\alpha$ as longer collars are
attached to the psc-bordism $Z$.

This work was motivated by intense discussions with D. Ruberman and
N. Saveliev during and after the PIMS Symposium on Geometry and
Topology of Manifolds held in Summer 2015.  The authors are grateful
to D. Ruberman and N. Saveliev for their help and inspiration.  It is
a pleasure to also thank C. Breiner, A. Fraser, and T. Schick  for very helpful comments.

\section{Preliminaries and Theorem 3}
\subsection{Stable minimal hypersurfaces with free boundary}\label{sec:minintro}
Let $(M,\bar g)$ be a compact oriented $(n+1)$-dimensional 
Riemannian manifold with 
nonempty boundary $\p M$. Assume $W\subset M$ is a
properly embedded hypersurface. 

Let $\bar h$ denote the restriction metric $\bar h=\bar g|_{W}$
and fix a unit normal vector field $\nu^W$ on $W$ which is
compatible with the orientation. This determines the second
fundamental form $A^W$ on $W$ given by the formula
$A^W_{\bar g}(X,Y)=\bar
  g(\nabla_XY,\nu^W) $
for vector fields $X$ and $Y$ tangential to $W$.  The trace of $A^W_{\bar
  g}$ with respect to the metric $\bar h$ gives the mean curvature
$H^W_{\bar g}=\tr_{\bar h}A^W_{\bar g}$. We will often omit the sub-
and super-scripts, writing $\nu, A,$ and $H$ if there is no risk of
ambiguity.
\begin{definition}
Let $W\subset M$ be a properly embedded hypersurface. A \emph{variation} of
the hypersurface $W\subset M$ is a smooth one-parameter family
$\{F_t\}_{t\in(-\epsilon,\epsilon)}$ of proper embeddings $F_t:W\to
M$, $t\in(-\epsilon,\epsilon)$ such that $F_0$ coincides with the
inclusion $W\subset M$. A variation
$\{F_t\}_{t\in(-\epsilon,\epsilon)}$ is said to be \emph{normal} if
the curve $t\mapsto F_t(x)$ meets $W$ orthogonally for each $x\in W$.
\end{definition}

The vector field $X=\frac{d}{dt}F_t|_{t=0}$ is called the
{\emph{variational vector field}} associated to
$\{F_t\}_{t\in(-\epsilon,\epsilon)}$.  For normal
variations, the associated variational vector field takes the form
$\phi\cdot\nu^W$ for some function $\phi\in C^\infty(W)$.  Clearly, a
variation $\{F_t\}_{t\in(-\epsilon,\epsilon)}$ gives a smooth
function $t\mapsto \vol(F_t(W))$.
\begin{definition}
A properly embedded hypersurface $W\subset (M,\bar g)$ is \emph{minimal with
  free boundary} if
\begin{equation*}
\begin{array}{c}
  \left.\frac{d}{dt}\vol(F_t(W))\right|_{t=0} = 0
\end{array}
\end{equation*}
for all variations $\{ F_t\}_{t\in(-\epsilon,\epsilon)}$.
  \end{definition}
More notation: we denote by $d\sigma$ and $d\mu$ the volume forms of
$(W,\bar h)$ and $(\p W, h)$, where $h=\bar h|_{\p W}$ is the induced
metric. We denote the outward-pointing unit length
normal to $\p M$ by $\nu^\p$.  Below, Lemmas \ref{fact1} and \ref{lem:variation} 
contain well-known formulas, see \cite{FL}.
\begin{lemma}\label{fact1}
Let $(M,\bar g)$ be an oriented Riemannian manifold and let $W\subset
M$ be a properly embedded hypersurface. If
$\{F_t\}_{t\in(-\epsilon,\epsilon)}$ is a variation of $W$ with
variational vector field $X$, then
\begin{equation}\label{eq:1var}
\left.\frac{d}{dt}\vol(F_t(W))\right|_{t=0} 
=-\int_WH^W\bar g(X,\nu^W)d\mu+\int_{\p W}\bar g(X,\nu^{\p M})d\sigma .
\end{equation}
In particular, a hypersurface $W$ is minimal with free boundary if and
only if $H_{\bar g}^W\equiv 0$ and $W$ meets the boundary $\p M$
orthogonally.
\end{lemma}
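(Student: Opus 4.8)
The plan is to carry out the standard first‑variation‑of‑volume computation, keeping careful track of the contribution of the corner $\p W\subset\p M$. First I would pass to the family of induced metrics on the fixed manifold $W$: since each $F_t$ is an embedding, $\vol(F_t(W))=\int_W dv_{g_t}$ where $g_t:=F_t^*\bar g$ and $dv_{g_t}$ is its Riemannian volume form, and since $W$ is compact and everything is smooth in $t$ I may differentiate under the integral sign. The classical identity for the derivative of a volume form gives $\left.\tfrac{d}{dt}\right|_{t=0}dv_{g_t}=\tfrac12\tr_{\bar h}(\dot g)\,dv_{\bar h}$, where $\dot g:=\left.\tfrac{d}{dt}\right|_{t=0}g_t$. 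Using metric‑compatibility of the Levi‑Civita connection of $\bar g$ together with the symmetry lemma for the mixed covariant derivative of $F$ (valid because the tangent fields of $W$ are $t$‑independent), one obtains $\dot g(Y,Z)=\bar g(\nabla_YX,Z)+\bar g(\nabla_ZX,Y)$ for $Y,Z$ tangent to $W$, so that $\tfrac12\tr_{\bar h}(\dot g)=\sum_i\bar g(\nabla_{e_i}X,e_i)$ for any local orthonormal frame $\{e_i\}$ of $(W,\bar h)$.

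Next I would split $X=X^\top+\bar g(X,\nu^W)\,\nu^W$ along $W$ and evaluate the trace termwise. For the tangential part, the Gauss formula $\nabla_{e_i}X^\top=\nabla^W_{e_i}X^\top+A^W(e_i,X^\top)\nu^W$ shows that $\bar g(\nabla_{e_i}X^\top,e_i)=\bar g(\nabla^W_{e_i}X^\top,e_i)$, whence $\sum_i\bar g(\nabla_{e_i}X^\top,e_i)=\mathrm{div}_WX^\top$. For the normal part, writing $\phi:=\bar g(X,\nu^W)$ and using $\bar g(\nabla_{e_i}\nu^W,e_i)=-A^W(e_i,e_i)$, one gets $\sum_i\bar g(\nabla_{e_i}(\phi\nu^W),e_i)=-\phi\,\tr_{\bar h}A^W=-\phi H^W$. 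Hence the integrand is $\mathrm{div}_WX^\top-H^W\bar g(X,\nu^W)$, and the divergence theorem on the compact manifold‑with‑boundary $(W,\bar h)$ converts the first term into $\int_{\p W}\bar g(X^\top,\eta)=\int_{\p W}\bar g(X,\eta)$, where $\eta$ is the outward unit conormal of $\p W$ in $W$ (the last equality because $\eta\perp\nu^W$). This is the asserted first‑variation formula.

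For the ``in particular'' part, the backward implication is immediate: if $H^W\equiv0$ the interior term drops out, and if moreover $W$ meets $\p M$ orthogonally then $\eta$ is normal to $\p M$; since each $F_t$ is a \emph{proper} embedding it keeps $\p W$ inside $\p M$, so $X|_{\p W}$ is tangent to $\p M$ and the boundary integrand $\bar g(X,\eta)$ vanishes. For the forward implication I would first feed in variations compactly supported in the interior of $W$ (so $X^\top=0$ and $\bar g(X,\nu^W)$ is an arbitrary bump function) to force $H^W\equiv0$, and then let $X|_{\p W}$ range over all vector fields tangent to $\p M$; vanishing of $\int_{\p W}\bar g(X,\eta)$ then forces $\eta\perp T\p M$ along $\p W$, which is precisely orthogonality of $W$ and $\p M$.

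The one genuinely delicate point is the bookkeeping at the codimension‑two locus $\p W\subset\p M$: one has to keep straight the $2$‑plane in $TM$ that is normal to $\p W$ and the distinguished unit vectors inside it — the conormal $\eta$, the hypersurface normal $\nu^W$, and the boundary normal $\nu^\p$ — and to verify that ``$F_t$ is a proper embedding for all $t$'' genuinely pins $X|_{\p W}$ to $T\p M$. Once that is in hand, the remaining ingredients — differentiation under the integral, the formula for $\dot g$, the Gauss formula, and the divergence theorem — are all routine, and the claim follows.
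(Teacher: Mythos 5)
The paper does not actually prove this lemma: it is stated as a ``well-known formula'' with a citation to Fraser--Li \cite{FL}, so there is no in-paper argument to compare against. Your computation is the standard first-variation argument (differentiate the pulled-back volume forms, $\tfrac12\tr_{\bar h}\dot g=\sum_i\bar g(\nabla_{e_i}X,e_i)$, split $X$ into tangential and normal parts, use the Gauss/Weingarten relations and the divergence theorem), the signs are consistent with the paper's convention $A^W(X,Y)=\bar g(\nabla_XY,\nu^W)$, and the derivation of the ``in particular'' clause (interior bump functions force $H^W\equiv 0$, then boundary variations tangent to $\p M$ force the conormal to be perpendicular to $T\p W\oplus$\,$T\p M$-directions) is sound, granted the routine fact that every field along $W$ tangent to $\p M$ on $\p W$ is realized by a proper variation.

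The one point you should not pass over silently is the boundary term. What you actually prove is
\begin{equation*}
\left.\frac{d}{dt}\vol(F_t(W))\right|_{t=0}=-\int_W H^W\,\bar g(X,\nu^W)\,d\mu+\int_{\p W}\bar g(X,\eta)\,d\sigma,
\end{equation*}
with $\eta$ the outward unit conormal of $\p W$ in $W$, and you then assert this ``is the asserted formula.'' But the paper's display (\ref{eq:1var}) literally has $\nu^{\p M}$, the outward unit normal of $\p M$, which is a different vector unless $W$ already meets $\p M$ orthogonally; indeed, since variations are through \emph{proper} embeddings, $X|_{\p W}$ is tangent to $\p M$ and $\bar g(X,\nu^{\p M})\equiv 0$, so the formula as literally printed would have an identically vanishing boundary term and could not yield the stated characterization of free-boundary minimality. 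In other words, the statement in the paper contains a notational slip, and the conormal version you derived (which is the formula in \cite{FL}, and the one the ``in particular'' clause and the rest of Section 2 actually rely on) is the correct one. Your proof is fine; just state explicitly that you are proving the formula with the conormal $\eta$ in place of $\nu^{\p M}$, rather than identifying the two without comment.
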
  
\begin{definition}
A properly embedded minimal hypersurface with free boundary $W$ is
\emph{stable} if
\begin{equation*}
\begin{array}{c}
\left.\frac{d^2}{dt^2}\vol(F_t(W))\right|_{t=0} \geq 0
\end{array}
\end{equation*}
for all variations $\{ F_t\}_{t\in(-\epsilon,\epsilon)}$. 
  \end{definition}
If a hypersurface $W$ is minimal with free boundary, then any
variational vector field must be parallel to $\nu^W$ on $\p W$ since
the variation must go through proper embeddings. Hence, it is enough to 
consider
only normal variations to analyze the second variation of the volume
functional. 
\begin{lemma}\label{lem:variation}
Let $(M,\bar g)$ be an oriented Riemannian manifold and let $W\subset
M$ be a properly embedded minimal hypersurface with free boundary.
Let $\{F_t\}_{t\in(-\epsilon,\epsilon)}$ be a normal variation with
variational vector field $\phi\cdot\nu^W$. Then
\begin{equation}\label{eq:2var}
\left.\frac{d^2}{dt^2}\vol(F_t(W))\right|_{t=0} \!
=\!\int_W\!\left(|\nabla\phi|^2-\phi^2(\Ric_{\bar g}
(\nu^W\!,\nu^W)+|A^W|^2)\right)d\mu
	-\int_{\p W}\!\phi^2A^{\p M}(\nu^W\!,\nu^W)d\sigma \ ,
\end{equation}
where $\Ric_{\bar g}$ denotes the Ricci tensor of $(M,\bar g)$.
\end{lemma}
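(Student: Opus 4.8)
The plan is to differentiate the first variation formula once more in $t$ and to use the free--boundary minimality of $W$ to discard the resulting ``error'' terms. Write $V(t)=\vol(F_t(W))$, set $W_t=F_t(W)$, and pull each geometric quantity back to $W$ by $F_t$: let $X_t=\tfrac{\p}{\p t}F_t$, let $\nu_t$ be the unit normal of $W_t$, let $\eta_t$ be the outward unit conormal of $\p W_t$ inside $W_t$, and let $d\mu_t,\, d\sigma_t$ be the volume forms of $W_t$ and $\p W_t$. Running the first variation computation (as in Lemma \ref{fact1}) for $W_t$ and the field $X_t$ --- noting that for $t\neq0$ the hypersurface $W_t$ need not meet $\p M$ orthogonally, so the boundary integrand carries the conormal $\eta_t$, which agrees with $\nu^{\p M}$ only at $t=0$ --- gives
\begin{equation*}
V'(t)=-\int_W H_t\,\bar g(X_t,\nu_t)\,d\mu_t+\int_{\p W}\bar g(X_t,\eta_t)\,d\sigma_t .
\end{equation*}
Since $W$ is minimal with free boundary, at $t=0$ we have $H_0\equiv0$ and $\eta_0=\nu^{\p M}$ (because $W\perp\p M$), hence $\bar g(X_0,\eta_0)=\bar g(\phi\nu^W,\nu^{\p M})=0$ on $\p W$. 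Differentiating at $t=0$, every term in which $d\mu_t$, $\nu_t$, or $d\sigma_t$ is differentiated vanishes, leaving
\begin{equation*}
V''(0)=-\int_W \dot H\,\phi\,d\mu+\int_{\p W}\Big(\tfrac{d}{dt}\big|_{0}\,\bar g(X_t,\eta_t)\Big)\,d\sigma ,\qquad \dot H:=\tfrac{\p}{\p t}H_t\big|_{t=0}.
\end{equation*}

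For the interior integral I would substitute the classical linearization of the mean curvature along a normal variation of speed $\phi$, namely $\dot H=\Delta_W\phi+\big(|A^W|^2+\Ric_{\bar g}(\nu^W,\nu^W)\big)\phi$, and integrate by parts over $W$; this yields $\int_W\big(|\nabla\phi|^2-\phi^2(\Ric_{\bar g}(\nu^W,\nu^W)+|A^W|^2)\big)\,d\mu$ plus a boundary remainder $-\int_{\p W}\phi\,\p_\eta\phi\,d\sigma$, where $\eta:=\eta_0=\nu^{\p M}$ is tangent to $W$ along $\p W$. For the boundary integral I would split $\tfrac{d}{dt}\big|_0\bar g(X_t,\eta_t)=\bar g\big(\tfrac{D X_t}{dt}\big|_{0},\nu^{\p M}\big)+\bar g\big(X_0,\tfrac{D\eta_t}{dt}\big|_{0}\big)$. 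Since the curve $t\mapsto F_t(x)$ lies in $\p M$ for $x\in\p W$, the Gauss formula for this curve identifies $\bar g\big(\tfrac{D X_t}{dt}\big|_{0},\nu^{\p M}\big)$ with the second fundamental form of $\p M$ applied to $X_0=\phi\nu^W$, i.e.\ with $-\phi^2A^{\p M}(\nu^W,\nu^W)$ on $\p W$ ($A^{\p M}$ being normalized as in the statement --- tracking signs here pins down the convention, the one under which a ball has convex boundary). For the other summand, differentiating $\bar g(\eta_t,\nu_t)\equiv0$ gives $\bar g\big(\nu^W,\tfrac{D\eta_t}{dt}\big|_{0}\big)=-\bar g\big(\nu^{\p M},\tfrac{D\nu_t}{dt}\big|_{0}\big)$, and the elementary identity $\tfrac{D\nu_t}{dt}\big|_{0}=-\nabla_W\phi$ (obtained by differentiating $\bar g(\nu_t,dF_t(V))=0$ for $V$ tangent to $W$) turns this into $\bar g(\nu^{\p M},\nabla_W\phi)=\p_\eta\phi$; so this summand contributes $\int_{\p W}\phi\,\p_\eta\phi\,d\sigma$. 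Adding the three boundary contributions, the two copies of $\int_{\p W}\phi\,\p_\eta\phi\,d\sigma$ cancel and exactly $-\int_{\p W}\phi^2A^{\p M}(\nu^W,\nu^W)\,d\sigma$ remains, which is \eqref{eq:2var}.

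The interior computation and the linearization of $H$ are routine, so I expect the boundary bookkeeping of the second step to be the only genuine obstacle. Two points need care: one must not replace $\eta_t$ by $\nu^{\p M}$ away from $t=0$, since the $t$--variation of the conormal is exactly what cancels the integration--by--parts boundary term; and the coefficient $A^{\p M}(\nu^W,\nu^W)$ must be extracted via the Gauss formula for the boundary curves $t\mapsto F_t(x)$, which is the only place beyond $t=0$ where the free--boundary hypothesis enters --- and also the reason \eqref{eq:2var} depends on $\phi$ alone and not on the higher--order behaviour of $\{F_t\}$, as the acceleration $\tfrac{D X_t}{dt}\big|_{0}$ contributes only through its $\nu^{\p M}$--component on $\p W$, which $\phi$ already determines. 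As a sign check one can take $W$ to be the flat equatorial disk in the round ball $B^{\,n+1}$, where $\Ric_{\bar g}=|A^W|^2=0$: for $\phi\equiv1$, \eqref{eq:2var} gives $V''(0)=-A^{\p M}(\nu^W,\nu^W)\,\vol(\p W)=-n\,\vol(B^{n})$, matching $\tfrac{d^2}{dt^2}\big|_{0}\vol\big(\{x_{n+1}=t\}\cap B^{\,n+1}\big)$.
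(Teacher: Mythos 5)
Your derivation is correct, and it is worth noting that the paper does not actually prove Lemma \ref{lem:variation} at all: it is quoted as a well-known formula with a citation to Fraser--Li \cite{FL}. Your argument is the standard route to that formula (differentiate the first variation, which for $t\neq 0$ carries the conormal $\eta_t$ rather than $\nu^{\p M}$; use $H_0\equiv 0$ and $\bar g(X_0,\eta_0)=0$ to kill all terms where $d\mu_t$, $\nu_t$, $d\sigma_t$ are differentiated; insert the classical linearization $\dot H=\Delta\phi+(|A^W|^2+\Ric(\nu,\nu))\phi$; and extract the boundary term from the Gauss formula for the curves $t\mapsto F_t(x)\subset\p M$ together with $\tfrac{D\nu_t}{dt}|_0=-\nabla\phi$), and the cancellation of the two $\int_{\p W}\phi\,\p_\eta\phi$ terms is exactly the right mechanism. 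The only imported ingredients are the first variation formula with conormal boundary term and the linearization of $H$, both standard and legitimately quotable here. Two remarks. First, your observation that the acceleration $\tfrac{DX_t}{dt}|_0$ enters only through its $\nu^{\p M}$-component along $\p W$, which is determined by $\phi$ via $A^{\p M}$, is precisely why the quadratic form is well defined on $\phi$ alone; this is a point the paper leaves implicit. Second, your handling of the sign of $A^{\p M}$ is the right call: the formula \eqref{eq:2var} is only true with the convention under which a round ball has convex boundary, i.e.\ $A^{\p M}(X,Y)=\bar g(\nabla_X\nu^{\p M},Y)$, which is \emph{opposite} to what one would get by blindly transplanting the paper's convention $A^W(X,Y)=\bar g(\nabla_XY,\nu^W)$ with the outward normal $\nu^{\p M}$; your equatorial-disk-in-the-ball check ($V''(0)=-n\vol(B^n)$) pins this down correctly and matches how the term is used later in the stability inequality \eqref{eq:gc2var}.
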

It will be useful to rewrite equation (\ref{eq:2var}). The
Gauss-Codazzi equations for a minimal hypersurface $W\subset M$
imply
\begin{equation*}
R^M_{\bar g}=R^W_{\bar h}+2\Ric_{\bar g}(\nu^W,\nu^W)+|A^W|^2
\end{equation*}
on $W$. Here $R^M_{\bar g}$ and $R^W_{\bar h}$ are the scalar
curvatures of $(M,\bar g)$ and $(W,\bar h)$, respectively. It follows
that the inequality $\left.\frac{d^2}{dt^2}\vol(F_t(W))\right|_{t=0}
\geq 0$ is equivalent to
\begin{equation}\label{eq:gc2var}
  \int_W|\nabla\phi|^2d\mu\geq\int_W\frac12
  \phi^2\left(R^M_{\bar g}-R^W_{\bar h}+|A^W|^2\right)d\mu
	-\int_{\p W}\phi^2A^{\p M}(\nu^W,\nu^W)d\sigma.
\end{equation}
\subsection{Conformal Laplacian with minimal boundary conditions}
The proof of Theorem \ref{thm3} will rely on some basic facts about
the conformal Laplacian on manifolds with boundary. Let $(W,\bar h)$
be an $n$-dimensional manifold with non-empty boundary $(\p W, h)$ 
where $h=\bar h|_{\p W}$. We consider the following pair of operators acting on
$C^\infty(W)$:
\begin{equation*}\label{intr:eq3}
\left\{
\begin{array}{lcll}
  L_{\bar h} &=& -\Delta_{\bar h}+c_n R^W_{\bar h} 
 & \mbox{in $W$}
  \\
B_{\bar h} & = & \p_\nu+2c_nH_{\bar h}^{\p W}  & \mbox{on $\p W$},
\end{array}
\right.
\end{equation*}
where $\nu$ is the outward pointing normal vector to $\p W$ and
$c_n=\frac{n-2}{4(n-1)}$.

Recall that if $\phi\in C^\infty(W)$ is a positive
function, then the scalar and boundary mean curvatures of the
conformal metric $\tilde h=\phi^{\frac{4}{n-2}}\bar h$ are given by
\begin{equation}\label{eq:conformal}
\left\{
\begin{array}{lcll}
  R_{\tilde{h}} &=& c_n^{-1} \phi^{-\frac{n+2}{n-2}} \cdot L_{\bar
    h}\phi & \mbox{in $W$} \\ H_{\tilde{h}} & = &
  \frac{1}{2}c_n^{-1}\phi^{-\frac{n}{n-2}} \cdot B_{\bar h}\phi &
  \mbox{on $\p W$}.
\end{array}
\right.
\end{equation}
We consider a relevant Rayleigh quotient and take the infimum:
\begin{equation}\label{eq:Rayleigh}
\lambda_1=\inf_{\phi\not\equiv0\in H^1(W)}\frac{\int_W\left(|\nabla\phi|^2+c_nR^W_{\bar h}\phi^2\right)d\mu+2c_n\int_{\p W}H_{\bar h}^{\p W}\phi^2d\sigma}{\int_W\phi^2d\mu}.
\end{equation}
According to standard elliptic PDE theory, we obtain an elliptic
boundary problem, denoted by $(L_{\bar h}, B_{\bar h})$,
and the infimum $\lambda_1 = \lambda_1(L_{\bar h}, B_{\bar h})$ is
the \emph{principal eigenvalue of the minimal boundary problem}
$(L_{\bar h}, B_{\bar h})$. The corresponding Euler-Lagrange
equations are the following:
\begin{equation}
\label{eq:eigenvalue}
\left\{
\begin{array}{lcll}
  L_{\bar h}\phi &= & \lambda_1\phi& \text{ in }W
  \\
  B_{\bar h}\phi &= & 0 &\text{ on } \p W.
\end{array}
\right.
\end{equation}  
This problem was first studied by Escobar \cite{E1} in the context of
the Yamabe problem on manifolds with boundary.  

Let $\phi$ be a solution of (\ref{eq:eigenvalue}).  It is
well-known that the eigenfunction $\phi$ is smooth and can be chosen to
be positive.  
A straight-forward computation shows that the conformal metric $\tilde h=
\phi^{\frac{4}{n-2}}\bar h$ has the following scalar and mean
curvatures:
\begin{equation}\label{eq:6}
  \left\{
\begin{array}{lcll}
  R_{\tilde  h} &= & \lambda_1 \phi^{-\frac{4}{n-2}}_1 & \mbox{in $W$}
\\
  H_{\tilde  h} &\equiv &  0 & \mbox{on $\p W $}.
\end{array}
\right. 
\end{equation} 
In particular, the sign of the eigenvalue $\lambda_1$ is a 
conformal invariant, see
\cite{E1,E3}.

\subsection{Proof of Theorem \ref{thm3}}\label{sec:thm3}
  Let $(M,\bar g)$ and
  $W\subset M$ be as in Theorem \ref{thm3}.  From the assumption
  $H^{\p M}\equiv0$, one can use the Gauss equations to show that
  $A^{\p M}(\nu,\nu)=-H^{\p W}$ where $H^{\p W}$ is the mean curvature
  of $\p W$ as a hypersurface of $W$.  Now, using the condition
  $R^M_{\bar g}>0$, the stability inequality (\ref{eq:gc2var}) implies
\begin{equation}\label{eq:thm3-1}
  \int_W\left(|\nabla\phi|^2+\frac12 R^W_{\bar h}\right)d\mu+\int_{\p W}\phi^2H^{\p W}d\sigma\geq0
\end{equation}
for all functions $\phi\in H^1(W)$ with strict inequality if
$\phi\not\equiv0$.  By simple manipulation, the inequality
(\ref{eq:thm3-1}) may be written as
\begin{equation}\label{eq:thm3var}
\int_W\left(|\nabla\phi|^2+c_nR^W_{\bar h}\right)d\mu+2c_n\int_{\p
  W}\phi^2 H^{\p W}d\sigma > (1-2c_n)\int_W|\nabla\phi|^2d\mu
\end{equation}
for all $\phi\not\equiv0\in H^1(W)$.  The right hand side of
(\ref{eq:thm3var}) is non-negative since $1-2c_n=\frac{n}{2(n-1)}>0$.
Furthermore, the left hand side of (\ref{eq:thm3var}) coincides with
the numerator of the Rayleigh quotient in equation
(\ref{eq:Rayleigh}).  We conclude that the principal eigenvalue
$\lambda_1= \lambda_1(L_{\bar h},B_{\bar h})$ is positive. Let $\phi$
be an eigenfunction corresponding to $\lambda_1$.  Then, according to
(\ref{eq:6}), the metric $\tilde h= \phi^{\frac{4}{n-2}}\bar h$ has
positive scalar curvature and zero mean curvature on the boundary.
This completes the proof of Theorem \ref{thm3}.

\section{Cheeger-Gromov convergence of minimizing hypersurfaces}\label{sec3}
\subsection{Convergence of hypersurfaces}
Here we introduce the notion of smooth convergence of hypersurfaces we
require for the proof of Theorem \ref{thm5}.  First, we consider the
case when the hypersurfaces are embedded in the same ambient
$(n+1)$-dimensional manifold $M$.  Below we use coordinate charts
$\Phi_j : U_j\to M$, where $U_j$ is an open subset of
$\mathbb{R}^{n+1}_+=\{(x_1,\ldots,x_{n+1})\in\mathbb{R}^{n+1}\colon
x_{n+1}\geq 0\}$.

\begin{figure}[!htb]
\begin{picture}(0,0)
\put(94,53){{\small $P$}}
\put(74,43){{\small $x$}}
\put(5,65){{\small $\eta$}}
\put(40,73){{\small $x\!+\!u(x)\eta $}}
\put(88,10){{\small $U$}}
\end{picture}
\includegraphics[height=1.7in]{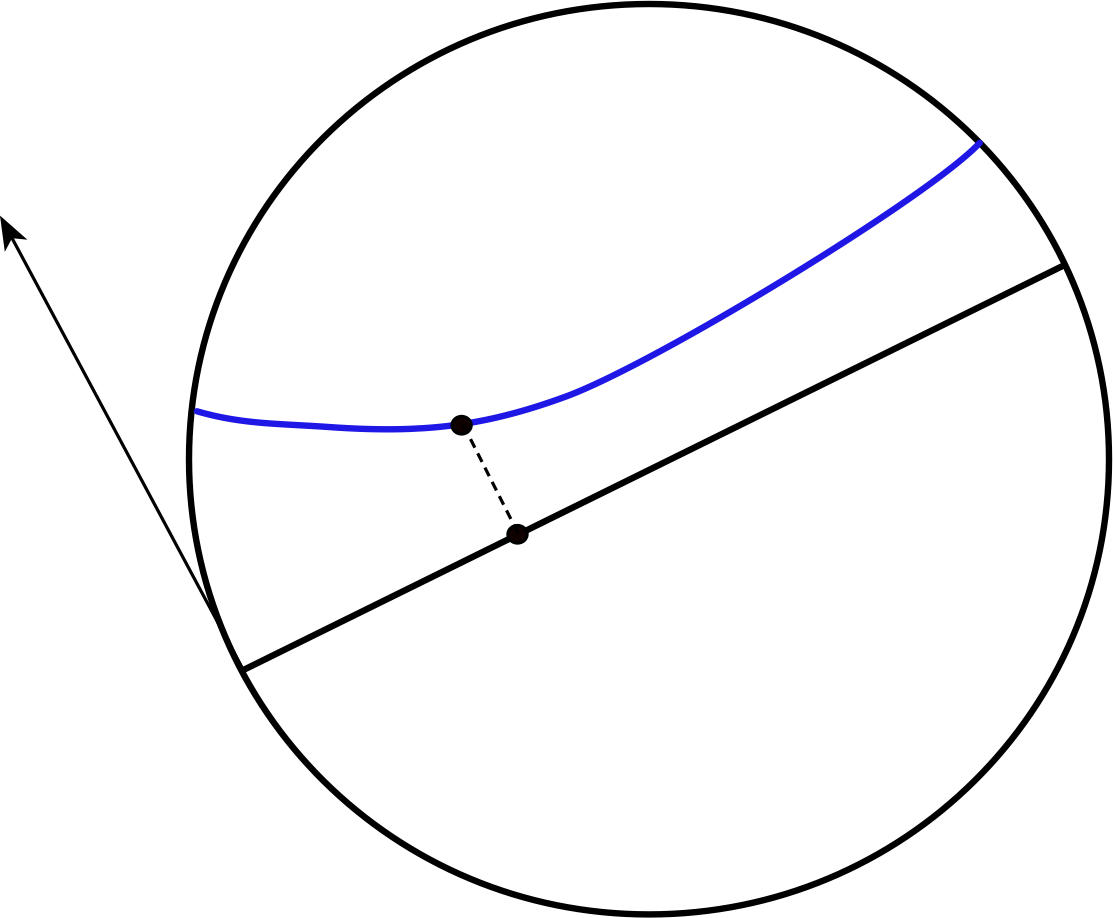}
\caption{$\mathrm{graph}(u)$}
\label{fig:01}
\end{figure} 

Let $P\subset \mathbb{R}^{n+1}$ be a hyperplane equipped with a normal
unit vector $\eta$, and $U\subset \mathbb{R}^{n+1}_+$ be an open
subset. Then for a function $u : P\cap U \to \mathbb{R}$, we denote by
$\mathrm{graph}(u)$ its \emph{graph}, see Fig. \ref{fig:01}:
\begin{equation*}
\mathrm{graph}(u)=\{x+u(x)\eta \ | \ x\in P\cap U \ \}.
\end{equation*}  
\begin{definition}\label{def:local-graph}
Let $k \geq 1$ be an integer.  Let $(M,\bar g)$ be an
$(n+1)$-dimensional compact Riemannian manifold and let
$\{\Sigma_i\}_{i=1}^\infty$ be a sequence of smooth, properly embedded
hypersurfaces. Then we say that the sequence
$\{\Sigma_i\}_{i=1}^\infty$ converges to a smooth embedded
hypersurface $\Sigma_\infty$ \emph{$C^k$-locally as graphs} if there
exist
\begin{enumerate}
\item[(i)] coordinate charts $\Phi_j:U_j\to M$ for $j=1,\dots,N$;
\item[(ii)] hyperplanes $P_j\subset \mathbb{R}^{n+1}$ 
  equipped with unit normal vectors $\eta_j$ for $j=1,\dots,N$;
\item[(iii)] smooth functions $u_{i,j}: P_j\cap U_j \to\mathbb{R}$ for
  $j=1,\dots,N$, $i=1,2,\ldots$, and $i=\infty$,
\end{enumerate}
which satisfy the following conditions:
\begin{enumerate}
\item[(a)] $\displaystyle
  \bigcup_{j=1}^N\Phi_j(\mathrm{graph}(u_{i,j})\cap U_j)=\Sigma_i$ for
  $i=1,2,\ldots$ and $i=\infty$;
\item[(b)] for each $j=1,\ldots, N$, $u_{i,j}\to u_{\infty,j}$ 
  in the $C^k(P_j\cap U_j)$ topology as
  $i\to\infty$.
\end{enumerate}
We say the sequence $\{\Sigma_i\}_{i=1}^\infty$ converges to a smooth
embedded hypersurface $\Sigma_\infty$ \emph{smoothly locally as
  graphs} if it converges $C^k$-locally as graphs for all
$k=1,2,\ldots$.
\end{definition}
Next, we consider a sequence $\{(M_i, \Sigma_i, \bar g_i,
\mathsf{S}_i)\}_{i=1}^\infty$, where $(M_i, \bar g_i)$ is a Riemannian manifold,
$\Sigma_i\subset M_i$ is a properly embedded smooth hypersurface, and
$\mathsf{S}_i\subset M_i$ a compact subset, playing a role of a
base-point or a finite collection of base points. 
\begin{definition}\label{def:graph}
Let $k \geq 1$ be an integer, and $\{(M_i, \Sigma_i, \bar g_i,
\mathsf{S}_i)\}_{i=1}^\infty$ be a sequence as above, where $\dim
M_i=n+1$. We say that $\{(M_i, \Sigma_i, \bar g_i,
\mathsf{S}_i)\}_{i=1}^\infty$ $C^k$-converges to
$(M_\infty,\Sigma_\infty,\bar g_\infty, \mathsf{S}_\infty)$ if there
is an exhaustion of $M_\infty$ by precompact open sets
\begin{equation*}
\mathsf{S}_\infty\subset \mathsf{U}_1\subset
\mathsf{U}_2\subset\dots\subset M_\infty, \quad
M_\infty=\bigcup_{i=1}^\infty \mathsf{U}_i
\end{equation*} 
and maps $\Psi_i:\mathsf{U}_i\to M_i$ which are diffeomorphisms onto
their images for each $i=1,2,\ldots$, such that
\begin{enumerate}
\item[(1)]
  $\mathrm{dist}^{M_\infty}_H(\mathsf{S}_\infty,\Psi_i^{-1}(\mathsf{S}_i))\to
  0$ as $i\to\infty$, where $\mathrm{dist}^{M_\infty}_H$ is the Hausdorff distance
  for subsets of the manifold $M_\infty$;
\item[(2)] the sequence $\{\Psi_i^*\bar g_i\}$ of metrics converges
  to $\bar g_\infty$
  in the ${C^{k}(\mathsf{U}_i)}$-topology as $i\to\infty$;
\item[(3)] the sequence of hypersurfaces
  $\{\Psi_j^{-1}(\Sigma_i)\}_{i=1}^\infty$ converges $C^k$-locally as
  graphs in the manifold $M_\infty$ to $\Sigma_\infty\cap
  \mathsf{U}_j$ as $i\to\infty$ for each $j=1,\dots,N$.
\end{enumerate}
\end{definition}
\begin{rem}
 We notice that the conditions $(1)$ and $(2)$ imply that the sequence
$\{(M_i,\bar g_i,\mathsf{S}_i)\}_{i=1}^\infty$ $C^k$-converges to $(M_\infty,\bar
g_\infty,\mathsf{S}_\infty)$ in the Cheeger-Gromov topology. 
\end{rem}
We say that $\{(M_i,\Sigma_i,\bar g_i,\mathsf{S}_i)\}_{i=1}^\infty$
\emph{smoothly converges} to $(M_\infty,\Sigma_\infty,\bar
g_\infty,\mathsf{S}_\infty)$ if it $C^k$-converges for all $k\geq 1$.
Then we say that $\{(M_i, \Sigma_i, \bar g_i,
\mathsf{S}_i)\}_{i=1}^\infty$ \emph{sub-converges} to
$(M_\infty,\Sigma_\infty,\bar g_\infty, \mathsf{S}_\infty)$ if it has
a subsequence which converges to $(M_\infty,\Sigma_\infty,\bar
g_\infty,\mathsf{S}_\infty)$.  In this case we write
\begin{equation*} (M_i,
\Sigma_i, \bar g_i, \mathsf{S}_i) \longrightarrow
(M_\infty,\Sigma_\infty,\bar g_\infty, \mathsf{S}_\infty).
\end{equation*} 

\subsection{Main convergence result}\label{sec3-2}
We are ready to set the stage for the main result of this section.
Let $(Y,g)$ be a closed, oriented $n$-dimensional Riemannian manifold with a
homology class $\alpha\in H_{n-1}(Y;\mathbb{Z})$. As we discussed in
Section \ref{sec1}, the class $\alpha$ gives the Poincar\`e dual class
$D\alpha= [\gamma]\in H^1(Y;\mathbb{Z})$ represented by some map
$\gamma : Y \to S^1$. Furthermore, we assume that there is a bordism
\begin{equation}\label{eq:bordism}
(M,\bar g, \bar \gamma) : (Y,g,\gamma ) \rightsquigarrow
  (Y',g',\gamma')
\end{equation}  
for some triple $(Y',g',\gamma')$. In the above, $\bar \gamma : M\to
S^1$ represents a class $[\bar \gamma]\in H^1(M;\mathbb{Z})$
Poincar\`e dual to a class $\bar\alpha\in H_{n}(M,\p M;\mathbb{Z})$.

Recall that $Y\subset\p M$ and $\bar g=g+dt^2$ near $Y$.  For a 
real number $L\geq 0$, we consider the following Riemannian manifold
\begin{equation*}
(M_L, \bar g_L) : = (M\cup_{Y\times\{-L\}}(Y\times[-L,0]), \bar g_L),
\end{equation*}  
where $\bar g_L$ restricts to $\bar g$ on $M$ and to the
product-metric $g+dt^2$ on $Y\times[-L,0]$.  We obtain another bordism
\begin{equation}\label{eq:bordism-L}
(M_L,\bar g_L, \bar \gamma_L) : (Y,g,\gamma ) \rightsquigarrow
  (Y',g',\gamma'),
\end{equation}  
where $[\bar \gamma_L]$ is the image of $[\bar\gamma]$ under the isomorphism
$H^1(M;\mathbb{Z})\cong H^1(M_L;\mathbb{Z})$. We refer to the bordism 
$(M_L,\bar g_L, \bar \gamma_L)$ as
the \emph{$L$-collaring of} $(M,\bar g, \bar \gamma)$.
Below we will take $L$ be an integer $i= 1,2,\ldots$,
and write $\bar \alpha_L\in H_{n}(M,\p M; \mathbb{Z})$ for the
class Poincar\`e dual to $[\bar \gamma_L]$.
\begin{m-lemma}\label{lem:main-1}
Let $(M,\bar g, \bar \gamma) : (Y,g,\gamma ) \rightsquigarrow
(Y',g',\gamma')$ be a bordism as in {\rm (\ref{eq:bordism})} and denote by
$(M_i,\bar g_i,\bar \gamma_i)$ the $i$-collaring of $(M,\bar g, \bar
\gamma)$ as in {\rm (\ref{eq:bordism-L})} for $i=0,1,2,\ldots$. Fix a
basepoint in each component of $Y$, denote their union by
$\mathsf{S}$, and let $\mathsf{S}_i$ be the image of $\mathsf{S}$ under
the inclusion
\begin{equation*}
  Y\cong Y\times \{0\} \subset Y\times [-i,0] \subset  M_i.
\end{equation*}
Assume
$W_i\subset M_i$ is an oriented homologically volume minimizing
representative of $\bar \alpha_i$ for $i=0, 1,2,\ldots$.
If $X\subset Y$ is an embedded hypersurface which is the only
volume minimizing representative of $\alpha\in H_{n-1}(Y;\mathbb{Z})$, then
there is smooth subconvergence
\begin{equation*}
(M_i,W_i,\bar g_i,\mathsf{S}_i) \longrightarrow
	(Y\times(-\infty,0], X\times(-\infty,0],g+dt^2,\mathsf{S}_\infty)
\end{equation*}
as $i\to\infty$ where $\mathsf{S}_\infty\subset Y\times\{0\}$
is the inclusion of $\mathsf{S}$.
\end{m-lemma}
\begin{rem}
In Main Lemma, we allow the manifold $Y'$ to be empty. 
\end{rem}  
\subsection{Proof of the Main Lemma: outline}
Consider the limiting space $Y\times(-\infty,0]$, with the
exhaustive sequence $\mathsf{U}_i=Y\times(-i-1,0]$ and maps
$\Psi_i:\mathsf{U}_i \to M_i$ taking $\mathsf{U}_i$ identically
onto $Y\times(-i-1,0]\subset M_i$.  Our choice of $\mathsf{U}_i$ and $\Psi_i$ satisfy the conditions
$(1)$ and $(2)$ from Definition \ref{def:graph} for obvious reasons.

It will be useful to equip $M$ with a height function $F: M\to[-1,0]$
satisfying $Y=F^{-1}(0)$ and $Y'= F^{-1}(-1)$.
Extend this function to $M_i$ by 
\begin{equation*}
  F_i(x)= 
  \begin{cases}
\ \ t &\text{ if }x=(y,t)\in Y\times[-i,0]\\
F(x)-i&\text{ if }x\in M.
\end{cases}
\end{equation*}
\begin{figure}[!htb]
\begin{picture}(0,0)
\put(-20,190){{\small $Y$}}
\put(73,158){{\small \textcolor{red}{$W_i^R$}}}
\put(28,0){{\small $M_i$}}
\put(180,190){{\small $0$}}
\put(180,127){{\small $-R$}}
\put(180,96){{\small $-R-1$}}
\put(180,-2){{\small $-i-1$}}
\put(150,114){{\small $F_i$}}
\end{picture}
\includegraphics[height=3in]{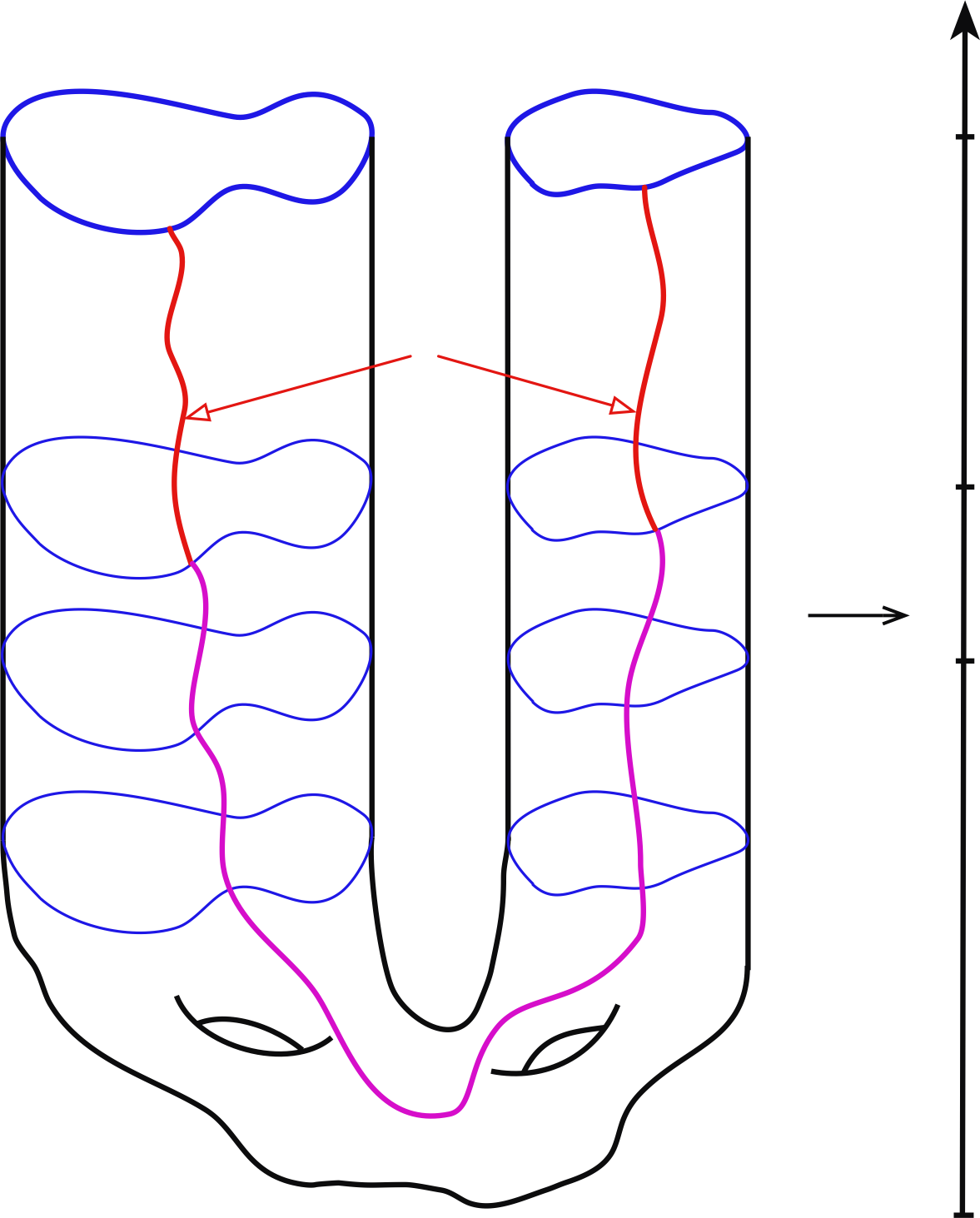}
\caption{The hypersurface $W_i^R\rh M_i$. In this figure, $Y'=\emptyset$.}
\label{fig:02}
\end{figure} 

\noindent
For any positive integer $i$ and heights $0\leq R<R'\leq i$, we
write
\begin{equation*}
W_i^R=F_i^{-1}([-R,0]) \ \ \ \mbox{and}
\ \ \ W_i[-R',-R]=F_i^{-1}([-R',-R]).
\end{equation*}
Let $\alpha\in H_{n-1}(Y;\mathbf{Z})$ be the class from the statement
of Main Lemma. For $L>0$ let 
\begin{equation*}
  \alpha\times [-L,0] \in H_n(Y\times[-L,0],Y\times\{-L,0\};\mathbf{Z})
\end{equation*}
be the product of $\alpha$ and
the fundamental class of $([-L,0],\{-L,0\})$.
We will break up the proof of Main Lemma into three claims.
\begin{claim}\label{claim1}
Let $L>0$. The hypersurface $X\times[-L,0]\subset Y\times[-L,0]$
is the only homologically volume-minimizing representative of 
$\alpha\times [-L,0]$.
\end{claim}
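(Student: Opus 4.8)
The plan is to reduce the minimization problem on the cylinder $Y\times[-L,0]$ to the minimization problem on the fiber $Y$, exploiting the product structure of both the metric $g+dt^2$ and the homology class $\alpha\times[-L,0]$. First I would observe that $X\times[-L,0]$ is a competitor with volume exactly $L\cdot\vol(X)$, since the metric is a product. Given any other properly embedded hypersurface $W\subset Y\times[-L,0]$ representing $\alpha\times[-L,0]$ (up to multiplicity), I want to show $\vol(W)\geq L\cdot\vol(X)$, with equality forcing $W=X\times[-L,0]$.

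The key step is a slicing/coarea argument. For $t\in[-L,0]$ let $W_t = W\cap (Y\times\{t\})$; for a.e.\ $t$ this is a rectifiable cycle in $Y\times\{t\}\cong Y$. I claim $[W_t]=\alpha$ in $H_{n-1}(Y;\Z)$ for a.e.\ $t$ (up to the fixed multiplicity): this follows because $W$ represents $\alpha\times[-L,0]$, and intersecting with a generic slice $Y\times\{t\}$ realizes the pairing that recovers $\alpha$ — concretely, the map $F_i$ (here the coordinate $t$) restricted to $W$ is, for generic $t$, transverse, and $W\cap F^{-1}(t)$ is homologous in $Y$ to the slice of the minimizing representative. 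Then by the coarea inequality applied to the projection $\pi:W\to[-L,0]$, which is $1$-Lipschitz,
\begin{equation*}
\vol(W)\ \geq\ \int_{-L}^{0}\mathcal{H}^{n-1}(W_t)\,dt\ \geq\ \int_{-L}^{0}\vol(X)\,dt\ =\ L\cdot\vol(X),
\end{equation*}
where the middle inequality uses that $X$ minimizes volume in the class $\alpha$ on $(Y,g)$, so each slice $W_t$, representing $\alpha$, has $\mathcal{H}^{n-1}(W_t)\geq\vol(X)$.

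For the rigidity statement, suppose $\vol(W)=L\cdot\vol(X)$. Then equality holds in the coarea inequality, which forces $W$ to be ``vertical'' — the tangent planes of $W$ contain the $\partial_t$ direction a.e.\ — so $W$ is (a multiplicity-one) cylinder $X'\times[-L,0]$ over some $X'\subset Y$; and equality in the slice inequality forces each $X'$ (a slice) to be a volume-minimizing representative of $\alpha$ in $(Y,g)$. By the uniqueness hypothesis on $X$, we get $X'=X$, hence $W=X\times[-L,0]$. The main obstacle I anticipate is the regularity bookkeeping: ensuring the coarea formula and the slice-homology identification are valid for the a priori only rectifiable minimizer $W$ (using Theorem \ref{thm4}'s regularity, $W$ is smooth away from a codimension-$\geq 7$ set, so slices are well-behaved), and verifying carefully that the equality case of the coarea inequality genuinely yields a product — this is where I would need to invoke that $W$ is stationary and apply a unique-continuation or constancy argument along the $t$-lines rather than just the measure-theoretic equality.
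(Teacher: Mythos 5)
Your proposal is correct and follows essentially the same route as the paper: apply the coarea formula to the $1$-Lipschitz projection $P:\Sigma\to[-L,0]$, note each slice $P^{-1}(t)$ represents $\alpha$ and hence has $\mathcal{H}^{n-1}$-measure at least $\vol(X)$, and conclude $\vol(\Sigma)\geq L\cdot\vol(X)$ with equality forcing each slice to be $X$ (using the uniqueness hypothesis), i.e.\ $\Sigma=X\times[-L,0]$. The regularity worries you raise are not needed in the paper's setting, since the claim is applied to smooth properly embedded hypersurfaces (the limits $W_{\infty,k}$), so the coarea formula and slice-homology identification are immediate.
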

\begin{claim}\label{claim2}
For each $R>0$, $\vol(W_i^R)\to R\cdot\vol(X)$ as $i\to\infty$.
\end{claim}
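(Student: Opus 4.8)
\textbf{Proof plan for Claim \ref{claim2}.}
The idea is to trap $\vol(W_i^R)$ between $R\cdot\vol(X)$ and $R\cdot\vol(X)+o(1)$ by combining three inputs: a uniform upper bound $\vol(W_i)\le i\cdot\vol(X)+C$ for the total volume, a coarea lower bound for $\vol(W_i^R)$ coming from the fact that the horizontal slices of $W_i$ still represent $\alpha$, and a monotonicity property of the average density $t\mapsto t^{-1}\vol(W_i^t)$ obtained from a cut-and-fill comparison. Only the equality $\vol(X)=\min\{\vol(S):[S]=\alpha\text{ up to multiplicity}\}$ enters, so Claim \ref{claim1} is not used here.

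\emph{Step 1: a global volume bound.} Fix once and for all a representative $V$ of $\bar\alpha\in H_n(M,\p M;\Z)$ (an integral current will do) whose boundary along $Y\subset\p M$ is exactly the minimizer $X$. This is possible because the $Y$-boundary of any representative of $\bar\alpha$ is homologous in $Y$ to $X$ --- both represent $\p\bar\alpha$ --- and altering a representative by a chain supported in $\p M$ does not change its class in $H_n(M,\p M;\Z)$. Gluing $V$ to the cylinder $X\times[-i,0]\subset Y\times[-i,0]$ along $X=X\times\{-i\}$ produces a competitor $\hat W_i\subset M_i$ with $\p\hat W_i\subset\p M_i$; collapsing the collar gives $H_n(M_i,\p M_i;\Z)\cong H_n(M,\p M;\Z)$ under which $[\hat W_i]=\bar\alpha_i$. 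Minimality of $W_i$ then yields $\vol(W_i)\le\vol(\hat W_i)=i\cdot\vol(X)+\vol(V)$, so we may take $C=\vol(V)$.

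\emph{Step 2: slices and two comparisons.} For a.e.\ $t\in(0,i)$ the slice $\Sigma_i(t):=W_i\cap F_i^{-1}(-t)$ is a closed hypersurface of $Y\times\{-t\}\cong(Y,g)$. Since $W_i$ is Poincar\'e dual to $[\bar\gamma_i]$ and slicing is compatible with restriction of Poincar\'e duals, $\Sigma_i(t)$ represents $\alpha$ (up to multiplicity) in $H_{n-1}(Y;\Z)$, hence $\vol(\Sigma_i(t))\ge\vol(X)$. On the collar $F_i$ is the $[-i,0]$-coordinate, so $|\nabla^{M_i}F_i|\equiv1$ there; applying the coarea formula to $F_i$ restricted to $W_i\cap F_i^{-1}([-t,-s])$ gives
\begin{equation*}
\vol(W_i^{t})-\vol(W_i^{s})\ \ge\ \int_s^t\vol(\Sigma_i(\tau))\,d\tau\ \ge\ (t-s)\cdot\vol(X),\qquad 0\le s\le t\le i,
\end{equation*}
and with $s=0$ this gives $\vol(W_i^{R})\ge R\cdot\vol(X)$. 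For the reverse comparison, replace $W_i\cap F_i^{-1}([-t,0])$ by the cylinder $\Sigma_i(t)\times[-t,0]$; the result is again a competitor for $\bar\alpha_i$ --- it agrees with $W_i$ below height $-t$ and is relatively homologous to it above --- so minimality gives $\vol(W_i^{t})\le t\cdot\vol(\Sigma_i(t))$ for a.e.\ $t$. Writing $v(t):=\vol(W_i^{t})$ and combining the two comparisons with the coarea estimate produces $v'(t)\ge\vol(\Sigma_i(t))\ge v(t)/t$ for a.e.\ $t$, and a Gr\"onwall-type argument (using that $v$ is continuous and non-decreasing) then shows $t\mapsto v(t)/t$ is non-decreasing on $(0,i]$.

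\emph{Step 3: conclusion.} For fixed $R$ and all large $i$, Steps 1--2 give
\begin{equation*}
\vol(X)\ \le\ \frac{\vol(W_i^{R})}{R}\ \le\ \frac{\vol(W_i^{i})}{i}\ \le\ \frac{\vol(W_i)}{i}\ \le\ \vol(X)+\frac{C}{i},
\end{equation*}
so $\vol(W_i^{R})\to R\cdot\vol(X)$ as $i\to\infty$; a height $R$ that is non-generic for some particular $i$ is handled by sandwiching $\vol(W_i^{R})$ between its values at nearby generic heights and using continuity in $R$. The part I expect to require the most care is the geometric--measure-theoretic bookkeeping in the dimensions where $W_i$ must be treated as a mass-minimizing integral current rather than a smooth hypersurface: one has to justify the slicing identities and the coarea inequality, and above all check that the cut-and-paste objects $\hat W_i$ and $\Sigma_i(t)\times[-t,0]$ are bona fide integral currents in the class $\bar\alpha_i$.
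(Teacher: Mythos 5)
Your argument is correct, but it follows a genuinely different route from the paper's. The paper also starts from a cap-plus-cylinder competitor (their $N_L=W_0\cup U_L\cup(X\times[-L+1,0])$, playing the role of your $V\cup(X\times[-i,0])$), but then proceeds by contradiction: assuming $\vol(W_{a_i}^{R_0})>R_0\vol(X)+\epsilon_0$ along a sequence, it lower-bounds the volume of the collar segments of $W_{a_i}$ using Claim \ref{claim1} and iterates the comparison over the sequence of collar lengths to accumulate an error $i\epsilon_0$, contradicting the uniform upper bound. You instead prove a direct sandwich: the coarea/slicing estimate $\vol(W_i^R)\ge R\vol(X)$ (which re-derives the mechanism behind Claim \ref{claim1} at the level of slices of $W_i$, so you genuinely do not need Claim \ref{claim1} itself), plus the new ingredient --- the cut-and-fill comparison $\vol(W_i^t)\le t\,\vol(\Sigma_i(t))$ obtained by replacing $W_i^t$ with the cylinder over its slice --- which together yield monotonicity of $t\mapsto\vol(W_i^t)/t$ and hence the quantitative bound $R\vol(X)\le\vol(W_i^R)\le R\vol(X)+RC/i$. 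What your approach buys is a direct, rate-explicit proof with no contradiction or iteration; what it costs is a heavier reliance on geometric-measure-theoretic bookkeeping, exactly where you flag it: one must check that for a.e.\ $t$ the slice $\Sigma_i(t)$ is an integral cycle representing $\alpha$ (a Lemma \ref{lemma01}-type naturality of Poincar\'e duality under restriction to $Y\times\{-t\}$), and that the glued current $(W_i\setminus W_i^t)\cup(\Sigma_i(t)\times[-t,0])$ has boundary in $\p M_i$ and lies in the class $\bar\alpha_i$ --- the difference with $W_i$ is supported in the collar with boundary in $Y\times\{0\}$, and $H_n(Y\times[-t,0],Y\times\{0\};\Z)=0$, so this does go through; also note your Gr\"onwall step is cleanest in integral form (iterating $v(t)-v(s)\ge\int_s^t v(\tau)\tau^{-1}d\tau$ gives $v(t)/t\ge v(s)/s$ without needing continuity of $v$). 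The paper's route, by comparison, only ever compares against one explicitly constructed smooth hypersurface and the product regions handled by Claim \ref{claim1}, so it keeps the measure-theoretic overhead lower at the price of a less transparent iteration.
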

\begin{claim}\label{claim3}
For each $R>0$, there is a sequence $\{a^R_i\}_{i=1}^\infty$ 
such that, for each $j=1,2,\ldots$, the hypersurfaces
$\{\Psi_j^{-1}(W_{a^R_i}^R)\}_{i=1}^\infty$ converge
smoothly locally as graphs in $Y\times(-\infty,0]$.
\end{claim}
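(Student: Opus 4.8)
\subsection*{Proof of Claim \ref{claim3}: plan}

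The plan is to combine a uniform area bound coming from Claim \ref{claim2} with uniform curvature estimates for stable minimal hypersurfaces, upgrade these to uniform $C^k$ bounds by elliptic regularity, and then run a standard Arzel\`a--Ascoli and diagonal argument to produce the subsequence $\{a^R_i\}$. Throughout, the essential structural facts are that each $W_i$ is homologically volume minimizing, hence stable minimal, and that inside the collar region $F_i^{-1}([-R-1,0])$ the metric $\bar g_i$ is the fixed product $g+dt^2$, with $\Psi_i$ restricting to an isometry onto $Y\times[-R-1,0]$; in particular the ambient geometry of that region has bounds independent of $i$.

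First I would fix the auxiliary height $R'=R+1$ and note that, by Claim \ref{claim2} applied with $R'$, the areas $\vol(W_i^{R'})$ are bounded uniformly in $i$ for all large $i$. Near the top boundary $Y\times\{0\}\subset\p M_i$ the hypersurface $W_i$ meets $\p M_i$ orthogonally (free boundary condition from minimality) and $Y\times\{0\}$ is totally geodesic, so a standard reflection across $Y\times\{0\}$ turns $W_i$ into a two-sided stable minimal hypersurface $\widetilde W_i$ without boundary in $(Y\times(-1,1),g+dt^2)$. The interior curvature estimates of Schoen and Simon for stable minimal hypersurfaces, valid in our range of dimensions, together with the uniform area bound, then give a constant $C=C(R)$ with $|A^{W_i}|\le C$ on $W_i^R$ for all large $i$; the area bound also furnishes, via the monotonicity formula, a uniform bound on the number of sheets of $W_i^R$ meeting any fixed small metric ball. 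Localizing, each such sheet is a graph of bounded gradient over an affine hyperplane, and bootstrapping the minimal surface equation through Schauder estimates (applied to $\widetilde W_i$ near $Y\times\{0\}$) promotes this to uniform $C^k$ bounds, for every $k$, on these graph functions, with constants depending only on $R$, $k$, and the fixed product metric.

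With these bounds in hand, I would fix $j$, cover the relevant compact part of $\mathsf{U}_j=Y\times(-j-1,0]$ by finitely many coordinate charts, and apply Arzel\`a--Ascoli to the uniformly $C^{k+1}$-bounded graph functions describing $\Psi_j^{-1}(W_i^R)$ in each chart to extract $C^k$-convergent subsequences; a diagonal argument over $k$, over the finitely many charts, and then over $j=1,2,\ldots$ yields a single subsequence $\{a^R_i\}$ along which $\Psi_j^{-1}(W^R_{a^R_i})$ converges smoothly locally as graphs, for every $j$, to a limiting embedded minimal hypersurface $\Sigma_\infty\subset Y\times(-\infty,0]$ (possibly with multiplicity, which will be excluded once Claims \ref{claim1}--\ref{claim3} are combined, using the volume count of Claim \ref{claim2} and the uniqueness of Claim \ref{claim1}). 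I expect the main obstacle to be obtaining the curvature estimates uniformly up to the free boundary $Y\times\{0\}$; the totally-geodesic reflection reduces this to the interior Schoen--Simon estimates. A minor secondary point, that the bottom truncation at height $-R$ does not spoil the local graph structure, is handled by deriving all estimates from the slightly larger piece $W_i^{R+1}$, which is genuinely minimal across that height.
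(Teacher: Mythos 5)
Your proposal is correct and follows essentially the same route as the paper: uniform local area bounds, reflection/doubling across the totally geodesic boundary $Y\times\{0\}$ to reduce the free-boundary case to the interior Schoen--Simon curvature estimates, local graphical representation with Schauder bootstrapping to uniform $C^{k,\alpha}$ bounds, a uniform bound on the number of local sheets, and an Arzel\`a--Ascoli plus diagonal argument over charts and $j$. The only cosmetic differences are that you obtain the area bound from Claim \ref{claim2} rather than the paper's separate slab-volume estimate (Step \ref{step:volume}) and bound the sheet count via monotonicity rather than a per-graph volume lower bound, neither of which changes the substance.
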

Now we show how Main Lemma follows from 
Claims \ref{claim1}, \ref{claim2}, and \ref{claim3}.
Indeed, by Claim \ref{claim3}, for each $k=1,2,\ldots$, there is a
sequence $\{a^k_i\}_{i=1}^\infty$ such that, for each $j=1,2,\ldots$,
the hypersurfaces $\{\Psi_j^{-1}(W_{a^k_i}^k)\}_{i=1}^\infty$
converges smoothly locally as graphs to some hypersurface
\begin{equation*}
W_{\infty,k}\subset
Y\times(-\infty,0].
\end{equation*}
We notice that the hypersurface $W_{\infty,k}$ is contained in
$Y\times[-k,0]$ and represents the class $\alpha\times[-k,0]$.
Since the convergence is smooth, we have
\begin{align}
  \vol(\Psi_j^{-1}(W_{\infty,k}))&=\lim_{i\to\infty}\vol(\Psi_j^{-1}(W_{a^k_i}^k)) =
k\cdot\vol(X)\notag,
\end{align}
where the last equality follows from Claim \ref{claim2}.  However, according to
Claim \ref{claim1}, the only volume minimizing representative of
$\alpha\times [-k,0]$ is the hypersurface $X\times[-k,0]$ which has
the volume $k\cdot\vol(X)$.
Thus $W_{\infty,k}$ must be $X\times[-k,0]$.  Evidently, the diagonal
sequence $\{\Phi_j^{-1}(W_{a_i^i})\}_{i=1}^\infty$ has the property
that, for each $k>0$, $\Phi_j^{-1}(W_{a_i^i}^k)$ converges smoothly
locally as graphs to $X\times[-k,0]$. This then completes the proof of
Main Lemma.
\subsection{Proof of Claim \ref{claim1}}
Let $\Sigma\subset Y\times[-L,0]$ be a properly embedded hypersurface 
representing the class
$\alpha\times[-L,0]$. 
Consider the projection function $P:\Sigma\to[-L,0]$.
The coarea formula \cite[Theorem 5.3.9]{KP} applied to $P$ yields
\begin{equation}\label{eq:coarea}
\int_{\Sigma}|\nabla P|d\mu=
\int_{-L}^0\mathcal{H}^{n-1}(P^{-1}(t))dt \ ,
\end{equation}
where $\mathcal{H}^{n-1}$ denotes the $(n-1)$-dimensional Hausdorff measure 
associated to the metric
$h+dt^2$ on $Y\times[-L,0]$. Notice that $P$ is weakly contractive
in the sense that
\begin{equation*}
  |P(x)-P(y)|\leq\mathrm{dist}^{\Sigma}(x,y)
\end{equation*}
for all $x,y\in\Sigma$. Thus we have the pointwise bound $|\nabla
P|\leq1$. Furthermore, since $P^{-1}(t)$ represents the class
$\alpha\in H_{n-1}(Y\times\{t\};\Z)$ for each $t\in[-L,0]$, 
\begin{equation*}
  \mathcal{H}^{n-1}(P^{-1}(t))\geq\vol(X)
\end{equation*} with equality if
and only if $P^{-1}(t)$ is $X$.  Combining this observation with
(\ref{eq:coarea}), we conclude
\begin{equation*}
\vol(\Sigma)\geq L\cdot\vol(X)
\end{equation*}
with equality if and only if $\Sigma=X\times[-L,0]$. 
This completes the proof of Claim \ref{claim1}.
\subsection{Proof of Claim \ref{claim2}}
Before we begin, we will construct particular (in general, non-minimizing)
properly embedded 
hypersurfaces $N_L\subset M_L$ representing $\alpha_L$ with which to
compare $\vol(W_L)$ against.

Let $X\subset Y$ and $W_0\subset M_0$ be as in Main Lemma. 
Since $\p W_0 \cap Y$ and $X$ represent
the same homology class, they are bordant via a smooth, properly
embedded hypersurface $\iota: U \rh Y\times[0,1]$. We identify
$[0,1]\cong[-L,-L+ 1]$ to obtain the embedding
\begin{equation*}
\iota_L : U \xhookrightarrow[]{\iota} Y\times[0,1] \cong
Y\times[-L,-L+ 1] \rh M_L.
\end{equation*}
Clearly the embedding $\iota: U \rh Y\times[0,1]$ may be chosen so that
\begin{equation*}
 N_L:= W_0 \cup_{\p W_0} U_L \cup (X \times [-L+1,0]),
\end{equation*}
where $U_L= \iota_L(U)$, is a smooth properly embedded hypersurface
of $M_L$.

Evidently, $\vol(N_L)=\vol(W_0)+\vol(U_L)+(L-1)\vol(X)$ and $N_L$
represents the same homology class as $W_L$.  Since $W_L$ is
homologically area-minimizing, we have $\vol(W_L)\leq\vol(N_L)$.
\begin{figure}[!htb]
\begin{picture}(0,0)
\put(-20,195){{\small $Y$}}
\put(69,166){{\small \textcolor{red}{$W_L^R$}}}
\put(69,124){{\small \textcolor{blue}{$N_L$}}}
\put(70,90){{\small \textcolor{red}{$U_L$}}}
\put(58,-10){{\small $M_L$}}
\put(360,193){{\small $0$}}
\put(360,135){{\small $-R$}}
\put(360,106){{\small $-L+1$}}
\put(360,75){{\small $-L$}}
\put(230,-10){{\small $Y\times (-\infty,0]$}}
\end{picture}
\includegraphics[height=3in]{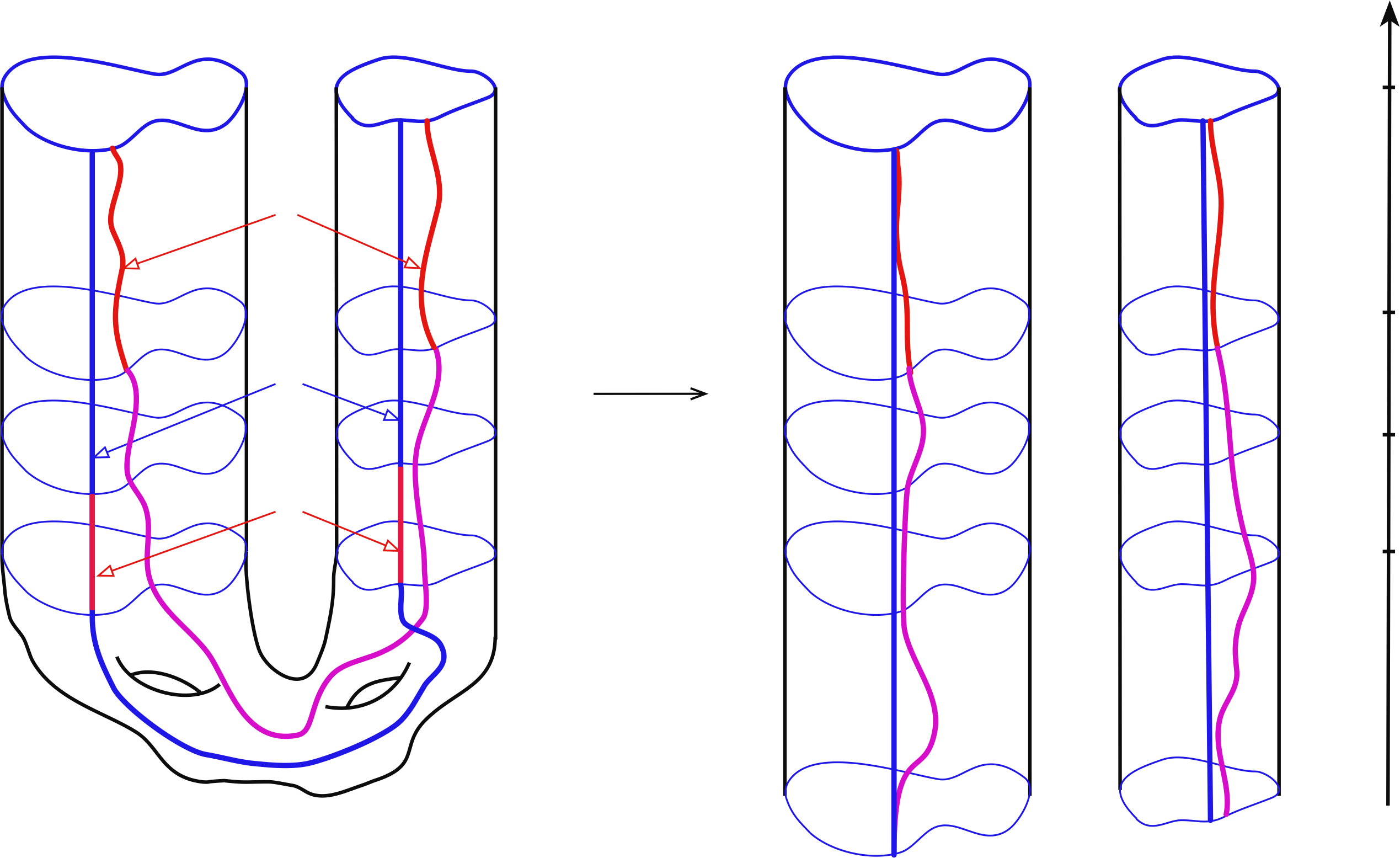}
\vspace*{2mm}

\caption{The hypersurface $N_L\rh M_L$.}
\label{fig:03}
\end{figure} 

\noindent
In other words, we obtain the inequality
\begin{equation}\label{eq:comparison}
\vol(W_L^{R})+\vol(W_L\setminus W_L^{R})\leq\vol(W_0)+\vol(U_L)+(L-1)\vol(X)
\end{equation}
for any $0<R<L-1$.

Now we are ready to prove Claim \ref{claim2}. Assume it
fails. Then there exist $\epsilon_0,R_0>0$ and an increasing sequence
of whole numbers
$\{a_i\}_{i=1}^\infty$ such that the inequality
\begin{equation}\label{eq:claim2vol}
\vol(W_{a_i}^{R_0})> R_0\cdot\vol(X)+\epsilon_0
\end{equation}
holds for all $i$. Combining the inequality (\ref{eq:comparison}) with
the assumption (\ref{eq:claim2vol}), we have
\begin{equation}\label{eq:inequality1}
\vol(W_0)+\vol(U_{a_i})+(a_i-1)\vol(X)>\vol(W_{a_i}\setminus W_{a_i}^{R_0})+\epsilon_0+R_0\vol(X).
\end{equation}
Now we will inspect the first term in the right hand side of
(\ref{eq:inequality1}):
\begin{align}\label{eq:inequality2}
\vol(W_{a_i}\setminus W_{a_i}^{R_0})&=
	\vol(W_{a_i}[a_{i-1}-a_i,-R_0])+\vol(W_{a_i}[-a_i-1,a_{i-1}-a_i])\notag\\
&\geq(a_i-a_{i-1}-R_0)\vol(X)+\vol(W_{a_{i-1}})\notag\\
&>(a_i-a_{i-1})\vol(X)+\epsilon_0+\vol(W_{a_{i-1}}\setminus W_{a_{i-1}}^{R_0}).
\end{align}
Here we use Claim \ref{claim1} in the first inequality and the
assumption (\ref{eq:claim2vol}) in the second.

Combining (\ref{eq:inequality1}) with (\ref{eq:inequality2}), we obtain 
\begin{equation*}
\vol(W_0)+\vol(U_{a_i})+(a_i-1)\vol(X)>(a_i-a_{i-1}+R_0)\vol(X)+2\epsilon_0+
	\vol(W_{a_{i-1}} \setminus W_{a_{i-1}}^{R_0}).
\end{equation*}
We iterate the argument to find 
\begin{equation}\label{eq:inequality3}
\vol(W_0)+\vol(U_{a_i})+(a_1-R_0-1)\vol(X)>i \cdot\epsilon_0+
	\vol(W_{a_1})
\end{equation}
for every $i=1,2,\ldots$. Since the left hand side of (\ref{eq:inequality3}) is
independent of $i$, we arrive at a contradiction
by taking $i$ to be sufficiently large.
\subsection{Proof of Claim \ref{claim3}}\label{section:claim3}
While the proof of Claim \ref{claim3} is rather technical, it is
essentially a consequence of standard tools used in the study of
stable minimal hypersurfaces. For instance, see \cite{CS}
for a similar result in a $3$-dimensional context.
We divide the proof into three steps,
referring to Appendix A when necessary.

To begin, we require the following straight-forward volume bound.
\begin{step}\label{step:volume}
For each $R>0$, there is a constant $V_R>0$ such that 
\begin{equation*}
\vol(W_i[-\lambda-R,-\lambda])\leq V_R
\end{equation*}
holds for all $i$ and all $\lambda\in[0,i-R]$. In particular, 
$\vol(W_i\cap B_R^{M_i}(x))\leq V_R$ for all $i$ and $x\in M_i$.
\end{step}
The next key ingredient is the following uniform bound on the second
fundamental form $A^{W_L}$.
\begin{step}\label{step:SFF}
There is a constant $C_1>0$, depending only on the geometry of $(M,\bar g)$,
such that 
\begin{equation*}
\sup_{x\in W_L}|A^{W_L}(x)|^2\leq C_1 \ \  \ \mbox{for $L\geq0$.}
\end{equation*}
\end{step}
Step 2 is a consequence of \cite[Corollary 1.1]{SS}. See Appendix,
Section \ref{section:SFF} for more details.

\begin{step}\label{step3}
For each $R>0$ and $j=1,2,\ldots$, the sequence of hypersurfaces
$\Psi^{-1}_j(W_i^R)$ sub-converges smoothly locally as graphs as $i\to \infty$.
\end{step}
\begin{proof}[Proof of Step \ref{step3}]
We restrict our 
attention to the tail of the sequence $\{W^R_i\}_{i=1}^\infty$, where
$i\geq R+1$. This allows us to consider each $W^R_i$ and $W^{R+1}_i$
as hypersurfaces of $Y\times(-\infty,0]$ which is where we will show
the convergence. By rescaling the original metric $\bar g$, we will assume
that $\mathrm{inj}_g\geq1$ and the bounds
\begin{equation*}
  \begin{array}{c}
    \sup_{x\in B_1(y)}\left|\bar g_{ij}(x)-\delta_{ij}\right|\leq\mu_0,\quad
\sup_{x\in B_1(y)}\left|\frac{\p \bar g_{ij}}{\p x^k}(x)\right|\leq\mu_0
\end{array}
  \end{equation*}
hold for $1\leq i,j,k\leq n+1$ in geodesic normal 
coordinates centered about any 
$y\in Y\times(-\infty,0]$ where $\mu_0$
is the constant from Lemma \ref{lem:Riemgraphical}.
Let $r=\min(\frac{1}{24},\frac{1}{6\sqrt{20C_0}})$
where $C_0$ is the constant from Step \ref{step:SFF}. 
 
We cover $Y\times[-R,0]$ by a finite collection of open balls 
$\mathcal{U}=\{B_{r}(y_l)\}_{l=1}^N$. Notice that
each $B_{r}(y_l)\subset\subset Y\times[-R-1,0]$.
Consider a ball $B_{r}(y_l)$ in $\mathcal{U}$ with the property that
\begin{equation*}
W_i^{R+1}\cap B_r(y_l)\neq\emptyset
\end{equation*} 
for infinitely many $i$. Unless explicitly stated, we will continue
to denote all subsequences by 
$W_i^{R+1}$. Our next goal is to
show that the sequence of hypersurfaces
$\{W_i^R\cap B_r(y_l)\}_{i=1}^\infty$ sub-converges
smoothly locally as graphs.

We choose a subsequence of $W_i^{R+1}$ and points $x_i\in
W_i^{R+1}\cap B_r(y_l)$ which converge to some point
$x_\infty\in\overline{B_r(y_l)}$.  Now it will
be convenient to work in the tangent space
to the point $x_\infty$. We
use the short-hand notation $\phi=\exp_{x_\infty}^{\bar g}$
and let
\begin{equation*}
  B=\phi^{-1}(B_1(x_\infty))\subset T_{x_\infty}(Y\times[-L-1,0]).
\end{equation*}  
Consider the properly embedded hypersurfaces $\Sigma_i\subset B$
with base points $p_i\in \Sigma_i$, given by 
\begin{equation*}
\Sigma_i=\phi^{-1}(B_1(x_\infty)\cap W_i^R),\quad p_i=\phi^{-1}(x_i).
\end{equation*}
We also write $Z=\phi^{-1}(y_l)$
Since $W_i^R\subset M_i$ are minimal, $\Sigma_i$ are minimal
hypersurfaces in $B$ with
respect to the metric $\bar g_B=(\phi^{-1})^*(\bar g)$.

Notice that the choice of $r$ allows us to 
apply Corollary \ref{cor:schauder} to each $\Sigma_i\subset B$ at
$p_i$ with $s=3r$.  For each $i=1,2,\dots$, we obtain an open subset $U_i\subset
T_{p_i}\Sigma_i\cap B$, a unit normal
vector $\eta_i\perp T_{p_i}\Sigma_i$, and a function
$u_i:U_i\to\mathbb{R}$ satisfying the bounds (\ref{eq:schauder}) and
such that $\mathrm{graph}(u_i)=B^{\Sigma_i}_{6r}(p_i)$. 
Moreover, the connected component of $B^{\bar g_B}_{3r}(p_i)\cap\Sigma_i$ containing 
$x_0$ lies in $B^{\Sigma_i}_{6r}(p_i)$. 

We use compactness of $S^n$ and pass to a subsequence so that the vectors
$\eta_i$ converge to some vector $\eta_\infty\in S^n$.  Let
$P_\infty\subset T_{x_\infty}(Y\times[-L-1,0])$
be the hyperplane perpendicular to $\eta_\infty$.
For large enough $i$, we may translate and rotate the sets $U_i$ to
obtain open subsets $U_i^\prime\subset P_\infty$
and functions $u_i^\prime:U_i^\prime\to\mathbb{R}$ such that
\begin{enumerate}
\item $\mathrm{graph}(u_i^\prime)=B_{4r}^{\Sigma_i}(p_i)$;
\item the ball $B_{2r}^{P_\infty}(0)\subset U_i^{\prime}$;
\item for each $k\geq1$ and $\alpha\in(0,1)$ there is a constant $C^\prime>0$, depending 
only on $n$, $k$, $\alpha$, and the geometry of $g$, such that
\[
||u_i^\prime||_{C^{k,\alpha}(U^{\prime}_i)}\leq C^\prime,
\]
\end{enumerate}
see Fig. \ref{fig:04}.
In particular, writing $u_i^{\prime\prime}=u_i^\prime|_{B_{2r}^{P_\infty}(0)}$,
the sequence $\{u_i^{\prime\prime}\}_{i}$ is uniformly
bounded in $C^{k,\alpha}(B_{2r}^{P_\infty}(0))$.
Moreover, the connected component of $B_{2r}(p_i)\cap \Sigma_i$
containing $p_i$ is contained in $\mathrm{graph}(u_i^{\prime\prime})$.
It follows that $\Sigma_i^\prime$, the connected component of $B_r(Z)\cap\Sigma_i$
containing $p_i$, lies in $\mathrm{graph}(u_i^{\prime\prime})$.
\begin{figure}[!htb]
\begin{picture}(0,0)
\put(84,50){{\small $B_{2r}(0)$}}
\put(54,60){{\small $x^\prime$}}
\put(34,50){{\small \textcolor{blue}{$U^\prime_i$}}}
\put(14,83){{\small \textcolor{blue}{$U_i$}}}
\put(-2,128){{\small \textcolor{blue}{$\Sigma_i'$}}}
\put(-20,84){{\small \textcolor{blue}{$T_{p_i}\!\Sigma_i$}}}
\put(0,42){{\small \textcolor{red}{$P_{\infty}$}}}
\put(31,88){{\small $x$}}
\put(80,112){{\small $p_i$}}
\put(93,78){{\small $0$}}
\put(11,102){{\small \textcolor{blue}{$u_i\!(x)$}}}
\put(48,82){{\small \textcolor{red}{$u_i^\prime\!(x)$}}}
\put(88,10){{\small $B$}}
\end{picture}
\includegraphics[height=2.5in]{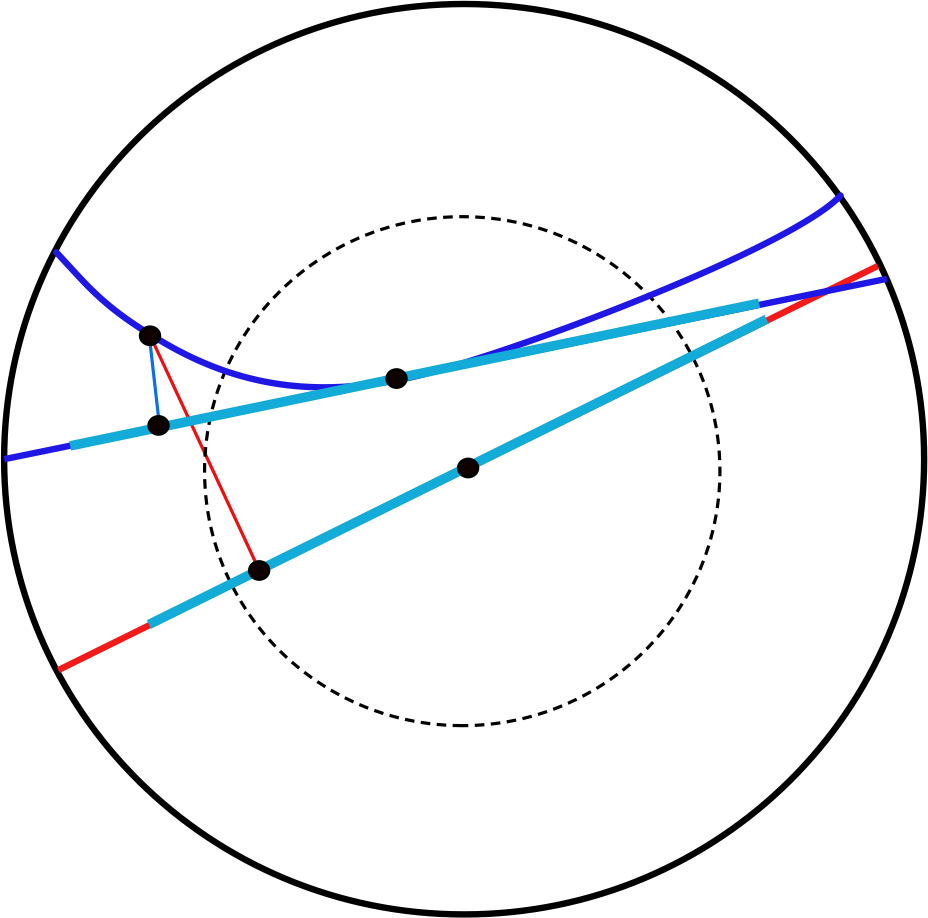}
\caption{The functions $u^\prime_i$ and hypersurfaces $\Sigma_i^\prime$}
\label{fig:04}
\end{figure} 

By Arzela-Ascoli, one can find a subsequence of $u_i^{\prime\prime}$ converging in
$C^k(B_{2r}^{P_\infty}(0))$ to a
function $u_\infty:B_{2r}^{P_\infty}(0)\to\mathbb{R}$. 
In particular, $u_\infty$ is a strong solution to the
minimal graph equation on $B_{2r}^{P_\infty}(0)$ with respect to 
$\bar g_B$ and $\Sigma_i^\prime$ converge as graphs to $\mathrm{graph}(u_\infty)$.
To summarize our current progress, the components of 
$W_i^{R+1}\cap B_r(y_l)$ containing $x_i$
sub-converge smoothly to $\phi(\mathrm{graph}(u_\infty))$.
This finishes our work with the hypersurfaces $\Sigma_i^\prime$.

Now suppose that there is a second sequence of connected components
within $W_i^{R+1}\cap B_r(y_l)$.  
We can repeat the above process to obtain a second limiting hypersurface.
Observe that the number of components of $W_i^{R+1}\cap B_r(y_l)$
uniformly bounded in $i$.  
Indeed, using the notation above, for any component $\bar{\Sigma_i}\subset W_i^{R+1}\cap B_r(y_l)$, we have
\begin{equation*}
\begin{array}{c}
  \vol_{\bar g_B}(\Sigma_i^\prime)\geq \vol_{\bar g_B}(B_r^{P_\infty}(0)),
\end{array}
\end{equation*}
which is uniformly bounded below in terms of $r$ and the geometry of $g$.
However, Step \ref{step:volume} implies that $\vol(W_i^R\cap B_r(y_l))$ is
bounded above uniformly in $i$ so the number of connected components
$W_i^R\cap B_r(y_l)$ is uniformly bounded in $i$. 
Hence the above process terminates after finitely many iterations.
We conclude that the sequence $\{W_i^R\cap B_{r}(y_l)\}_{i=1}^\infty$
sub-converges smoothly locally as graphs to a minimal hypersurface
$\Sigma_{\infty,l}$.

Now, restricting to this subsequence, we turn our attention to another
ball $B_{r}(y_{l^\prime})$ in the cover $\mathcal{U}$. We repeat the above
argument to obtain a further subsequence and limiting minimal
hypersurface $\Sigma_{\infty,l^\prime}$. Repeating this process for each
element of $\mathcal{U}$ produces a subsequence
converging to a minimal hypersurface
$W_\infty^R=\bigcup_{l=0}^N \Sigma_{\infty,l}$ smoothly
locally as graphs. This completes the proof of Claim \ref{claim3}, and
consequently, the proof of Main Lemma. 
\end{proof}

\section{Proof of Theorem 5}\label{sec4}
\subsection{Positive conformal bordism}
In order to prove Theorem \ref{thm5}, we have to use fundamental facts
relating conformal geometry and psc-bordism. We briefly
recall necessary results, following the conventions in \cite{AB}.  Let
$Y$ be a compact closed manifold with $\dim Y =n$ given together with
a conformal class $C$ of Riemannian metrics.  Then the \emph{Yamabe
  constant} of $(Y,C)$ is defined as
\begin{equation*}
Y(Y;C)=\inf_{g\in C}\frac{\int_Y R_{g}d\mu_{g}}{\vol_g(Y)^{\frac{n-2}{n}}}.
\end{equation*}
We say that a conformal class $C$ is \emph{positive} if $Y(Y;C)>0$. It
is well-known that $C$ is positive if and only if there exists a
psc-metric $g\in C$. 

Now let $Z: Y_0 \rightsquigarrow Y_1$ be a bordism between closed
manifolds $Y_0$ and $Y_1$. Suppose we are given 
conformal classes $C_0$ and $C_1$ on $Y_0$ and $Y_1$, respectively.
Let $\bar C$ be a conformal class on $Z$, such
that $\bar C|_{Y_0}=C_0$ and $\bar C|_{Y_1}=C_1$, i.e. $\p \bar C=
C_0\sqcup C_1$.  Denote by $\bar C^0 =\{\bar g\in\bar C\colon H_{\bar
  g}\equiv0\}$ the subclass of those metrics with
vanishing mean curvature of the boundary.
Then the \emph{relative Yamabe constant} of $((Z,\bar C), (Y_0\sqcup
Y_1,C_0\sqcup C_1))$ is defined as
\begin{equation*}
Y_{\bar C}(Z,Y_0\sqcup Y_1;C_0\sqcup C_1)=\inf_{\bar g\in\bar C^0} \frac{\int_Z
  R_{\bar g}d\mu_{\bar g}}{\vol_{\bar g}(Z)^{\frac{n-2}{n}}}.
\end{equation*} 
This gives the {\emph{relative Yamabe invariant}} 
\begin{equation*}
Y(Z,Y_0\sqcup Y_1;C_0\sqcup C_1) =\sup_{\bar C,\;\p \bar C=C_0\sqcup C_1}Y_{\bar
  C}(Z,Y_0\sqcup Y_1;C_0\sqcup C_1) .
\end{equation*} 
Now we assume that the conformal classes $C_0$ and $C_1$ are positive.
Then we say that positive conformal manifolds $(Y_0,C_0)$ and
$(Y_1,C_1)$ are \emph{positive-conformally bordant} if there exists a
conformal manifold $(Z,\bar C)$ and a bordism $Z: Y_0 \rightsquigarrow
Y_1$ between $Y_0$ and $Y_1$ such that $\p \bar C= C_0\sqcup C_1$
and $Y_{\bar C}(Z,Y_0\sqcup Y_1;C_0\sqcup
C_1)>0$. In this case, we write $(Z,\bar C) : (Y_0,C_0) \rightsquigarrow
(Y_1,C_1)$.
We need the following result which relates the above notions to psc-bordisms.
\begin{theorem}\cite[Corollary B]{AB}\label{thm:AB}
  Let $Y_0$ and $Y_1$ be closed manifolds of dimension $n\geq 3$, $Z:
  Y_0 \rightsquigarrow Y_1$ be a bordism between $Y_0$ and $Y_1$, and
  $g_0$ and $g_1$ be psc-metrics on $Y_0$ and $Y_1$, respectively. Then
  $Y(Z,Y_0\sqcup Y_1;[g_0]\sqcup [g_1])>0 $ if and only if the
  boundary metric $g_0\sqcup g_1$ on $Y_0\sqcup Y_1$ may be extended
  to a psc-metric $\bar g$ on $Z$ such that $\bar g= g_j+dt^2$ near
  $Y_j$ for $j=0,1$.
\end{theorem}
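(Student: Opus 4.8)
The plan is to prove the two implications separately, with the principal eigenvalue $\lambda_1(L_{\bar g},B_{\bar g})$ of the conformal Laplacian of the $(n+1)$-manifold $Z$ under the minimal boundary condition — the exact analogue of the operator pair $(L_{\bar h},B_{\bar h})$ of Section~2.2 — serving as the bridge, together with the standard fact (Escobar \cite{E1,E3}) that for any $\bar g\in\bar C^0$ the sign of $\lambda_1(L_{\bar g},B_{\bar g})$ is a conformal invariant which coincides with the sign of the relative Yamabe constant $Y_{\bar C}(Z,Y_0\sqcup Y_1;\partial\bar C)$.

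\emph{($\Leftarrow$).} Suppose $g_0\sqcup g_1$ extends to a psc-metric $\bar g$ on $Z$ with $\bar g=g_j+dt^2$ near $Y_j$. The product form makes $\partial Z$ totally geodesic, so $H_{\bar g}\equiv 0$; thus $\bar g\in\bar C^0$ for $\bar C=[\bar g]$, and $\partial\bar C=[g_0]\sqcup[g_1]$. For this $\bar g$ the numerator of the Rayleigh quotient \eqref{eq:Rayleigh} for $(L_{\bar g},B_{\bar g})$ reduces to $\int_Z\bigl(|\nabla\phi|^2+cR_{\bar g}\phi^2\bigr)\,d\mu$ with $c=\tfrac{n-1}{4n}>0$, which is positive for $\phi\not\equiv 0$ because $R_{\bar g}>0$. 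Hence $\lambda_1(L_{\bar g},B_{\bar g})>0$, so $Y_{\bar C}(Z,Y_0\sqcup Y_1;\partial\bar C)>0$, and therefore $Y(Z,Y_0\sqcup Y_1;[g_0]\sqcup[g_1])=\sup_{\bar C}Y_{\bar C}>0$.

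\emph{($\Rightarrow$).} Assume $Y(Z,Y_0\sqcup Y_1;[g_0]\sqcup[g_1])>0$ and choose a conformal class $\bar C$ with $\partial\bar C=[g_0]\sqcup[g_1]$ and $Y_{\bar C}>0$. Pick $\bar g_0\in\bar C^0$; then $\lambda_1(L_{\bar g_0},B_{\bar g_0})>0$, and if $\phi>0$ is a principal eigenfunction, the conformal metric $\bar g_1=\phi^{4/(n-1)}\bar g_0$ satisfies $R_{\bar g_1}>0$ in $Z$ and $H_{\bar g_1}\equiv 0$ on $\partial Z$, by the conformal-change formulas \eqref{eq:conformal} and \eqref{eq:6} applied in dimension $n+1$. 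So $Z$ carries a psc-metric with minimal boundary whose restriction to $Y_i$ lies in $C_i=[g_i]$. From here one invokes the standard collar deformations of psc-bordism theory (Gromov–Lawson, Gajer; cf. also the concatenation arguments in \cite{AB}): first bend $\bar g_1$, staying within psc-metrics, to a metric which is a Riemannian product $dt^2+h_i$ near each $Y_i$ (the boundary being minimal is what makes this possible), observing that at the product collar the Gauss equation forces $R_{h_i}=R_{\bar g_1}>0$, so each $h_i$ is a psc-metric on $Y_i$; then, since $h_i$ and $g_i$ are psc-metrics in the same positive conformal class $C_i$, connect them by the explicit psc-path $s\mapsto\bigl((1-s)+s\,u_i\bigr)^{4/(n-2)}h_i$, where $g_i=u_i^{4/(n-2)}h_i$ — its scalar curvature has the sign of $(1-s)\,c_nR_{h_i}+s\,L_{h_i}u_i>0$ — and feed this path slowly into the Gromov–Lawson cylinder construction to obtain a psc-metric on $Y_i\times[0,T_i]$ equal to $dt^2+h_i$ near one end and $dt^2+g_i$ near the other; gluing these cylinders onto $(Z,\bar g_1)$ along $Y_0$ and $Y_1$ produces, after the obvious identification $Z\cup(\text{cylinders})\cong Z$, the desired psc-metric equal to $g_j+dt^2$ near $\partial Z$.

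I expect the real obstacle to be this last paragraph: converting an abstract psc-representative of a positive conformal class on $Z$ into a bona fide extension of $g_0\sqcup g_1$ with product form near the boundary. The two delicate points are (a) that a psc-metric with $H\equiv 0$ on the boundary can be deformed through psc-metrics to be a Riemannian product near the boundary — the mean-convexity sign is borderline, so one may first need to bend to $H>0$ using the strict positivity of $R_{\bar g_1}$ — and (b) the fact that the psc-metrics inside a fixed positive conformal class on $Y_i$ form a path-connected set, which is what allows one to bridge $h_i$ to the prescribed $g_i$. The analytic content — solvability of \eqref{eq:eigenvalue} on $Z$, smoothness and positivity of the principal eigenfunction, and the sign comparison with $Y_{\bar C}$ — is routine once these deformation lemmas are in hand.
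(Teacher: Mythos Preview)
The paper does not prove this theorem at all: it is quoted verbatim as \cite[Corollary B]{AB} and used as a black box in Section~\ref{sec4}. So there is no ``paper's own proof'' to compare against; any argument you supply is necessarily going beyond what the present paper does.

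That said, your outline is the right one and matches the strategy in \cite{AB}. The $(\Leftarrow)$ direction is immediate exactly as you wrote it. For $(\Rightarrow)$ you have correctly isolated the two nontrivial ingredients: (a) deforming a psc-metric with $H\equiv 0$ to one that is a Riemannian product near the boundary, and (b) bridging the induced boundary metric $h_i\in[g_i]$ to the prescribed $g_i$ through psc-metrics. Your convex-combination path for (b) is fine: linearity of $L_{h_i}$ and positivity of both $R_{h_i}$ and $L_{h_i}u_i$ give $R>0$ along the whole path, and the Gromov--Lawson cylinder then produces the required psc-collar. Point (a) is genuinely the crux; your remark that one may first push to $H>0$ using the strict inequality $R_{\bar g_1}>0$ is the standard maneuver, after which the bending to a product is a known deformation lemma (this is where the real work in \cite{AB} lies). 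In short: your proposal is a faithful sketch of the Akutagawa--Botvinnik argument, with the analytic parts correctly flagged as routine and the geometric collar deformation correctly flagged as the substantive step.
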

\subsection{Long collars}\label{sec:longcollars}
We are ready to prove Theorem \ref{thm5} for $n\leq6$. The adjustments
required to adapt the following proof to the case $n=7$
are provided in Appendix \ref{sec:dim8}.

Let $(Y_0,g_0,\gamma_0)$ and $(Y_1,g_1,\gamma_1)$ be the manifolds
from Theorem \ref{thm5} and 
let $\alpha_0\in H_{n-1}(Y_0; \mathbf{Z})$ and $\alpha_1\in
H_{n-1}(Y_1; \mathbf{Z})$ be the classes Poincar\`e dual to $\gamma_0$
and $\gamma_1$, respectively. It is convenient to use the notation
\footnote{\ \ Here we emphasize a proper orientation on $Y_0$ and $Y_1$}
$Y=Y_0\sqcup - Y_1$ and
\begin{equation*}
\alpha=(\iota_0)_*\alpha_0-(\iota_1)_*\alpha_1\in H_{n-1}(Y;\mathbf{Z}) ,
\end{equation*}  
where $\iota_j:Y_j\rh Y$ is the inclusion map for $j=0,1$.  Then we
consider hypersurfaces $X_0\subset Y_0$ and $X_1\subset Y_0$ 
which are homologically volume minimizing representatives of 
the classes $\alpha_0$ and
$-\alpha_1$.  The existence of such smooth $X_0$ and $X_1$ is guaranteed 
in this range of dimensions, see \cite{SY79}. Notice that, by a small 
conformal change which does not effect the assumptions on 
$(Y_j,g_j,\gamma_j)$,
we may assume that $X_j$ is the only representative of $\alpha_j$ with
minimal volume for $j=0,1$, see \cite[Lemma 1.3]{Smale93}.
We write $(X,h_X)$ for the Riemannian manifold
$(X_0\sqcup X_1, g_0|_{X_0}\sqcup g_1|_{X_1})$.

Now we choose a psc-bordism $(Z,\bar g,\bar \gamma):
(Y_0,g_0,\gamma_0) \rightsquigarrow (Y_1,g_1,\gamma_1)$. We will 
use $(Z,\bar g,\bar \gamma)$ to construct a psc-bordism which
satisfies the conclusion of Theorem \ref{thm5}. We denote by $\bar
\alpha\in H_{n}(Z;\mathbf{Z})$ the homology class Poincar\`e dual to
$\bar \gamma$. Then $\p\bar \alpha =\alpha$, see Lemma \ref{lemma01}.

Now for each $i=1,2,\ldots$, we consider the $i$-collaring of the bordism $(Z,\bar g, \bar
\gamma)$, denoted by $(Z_i,\bar g_i,
\bar \gamma_i)$, as in Section \ref{sec3-2}. By Theorem \ref{thm4}, there
exists properly embedded hypersurfaces $W_i\subset Z_i$ which are
homologically volume minimizing and represents $\bar\alpha_i$.
The restrictions of $\bar g_i$ to $W_i$ and $\p W_i$ are denoted by 
$\bar h_i$ and $h_i$, respectively. 

In preparation to apply Main Lemma, we fix basepoints $x_j\in X_j$ for
each $j=0,1$ and set $\mathsf{S}=\{x_0,x_1\}\subset X$.  Naturally,
the set $\mathsf{S}$ is identified with the subsets $\mathsf{S}_i$
in $(X\times\{0\})\subset \p Z_i$ for $i=1,2,\ldots$ and with
$\mathsf{S}_\infty$ in the boundary of the cylinder
$(X\times\{0\})\subset(Y\times(-\infty,0])$.  According to Main Lemma we may
  find a subsequence $\{a_i\}_{i=1}^\infty$ such that 
\begin{equation*}
(Z_{a_i},W_{a_i},\bar g_{a_i},\mathsf{S}_{a_i})\longrightarrow
(Y\times(-\infty,0],X\times(-\infty,0],g+dt^2,\mathsf{S}_\infty)
\end{equation*}
smoothly as $i\to\infty$ and the Riemannian manifolds $(\p
W_{a_i},h_{a_i})$ converge to $(X,h_X)$ in the smooth Cheeger-Gromov
topology as $i\to\infty$.
\begin{rem}
We note that the manifolds $(\p W_{a_i},h_{a_i})$, $(X,h_X)$ are
compact and so there is no need to specify base points for this convergence.
\end{rem}  
The following is a special case of a much more general fact on 
the behavior of elliptic eigenvalue problems under
smooth Cheeger-Gromov convergence (see \cite{BM}).
\begin{lemma}\label{lem:thm5}
Let $\{(M_i,g'_i)\}_{i=1}^\infty$ be a sequence of compact Riemannian manifolds
smoothly converging to a compact Riemannian manifold
$(M_\infty,g'_\infty)$ in the Cheeger-Gromov sense.  If
$Y(M_\infty;[g'_\infty])>0$, then, upon passing to a subsequence, $Y(M_i;[g'_i])>0$ for all
sufficiently large $i$.
\end{lemma}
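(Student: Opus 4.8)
The plan is to prove the statement by exhibiting, for each large $i$, a metric $g_i'' \in [g_i']$ with positive scalar curvature, using the principal eigenvalue of the conformal Laplacian as the mechanism. Recall that for a closed manifold $(M,g)$ the sign of $Y(M;[g])$ agrees with the sign of the principal eigenvalue $\lambda_1(L_g)$ of the conformal Laplacian $L_g = -\Delta_g + c_n R_g$; this is the closed-manifold analogue of the discussion in Section 2.2 (no boundary term, since the $M_i$ here are closed). So the statement $Y(M_\infty;[g_\infty'])>0$ is equivalent to $\lambda_1(L_{g_\infty'})>0$, and it suffices to show $\lambda_1(L_{g_i'})>0$ for all large $i$.

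First I would set up the comparison using the Cheeger-Gromov convergence. Since the manifolds are compact, for $i$ large there are diffeomorphisms $\Psi_i : M_\infty \to M_i$ with $\Psi_i^* g_i' \to g_\infty'$ in the $C^\infty$ (in fact $C^2$ suffices) topology on $M_\infty$. The conformal Laplacian depends on the metric through the Laplace-Beltrami operator and the scalar curvature, both of which involve at most two derivatives of the metric; hence the coefficients of $\Psi_i^* L_{g_i'}$ converge uniformly to those of $L_{g_\infty'}$, and likewise the volume forms converge. Writing the Rayleigh quotient
\begin{equation*}
\lambda_1(L_g) = \inf_{\phi \not\equiv 0 \in H^1(M)} \frac{\int_M \left(|\nabla_g \phi|^2 + c_n R_g \phi^2\right)\, d\mu_g}{\int_M \phi^2\, d\mu_g},
\end{equation*}
pulled back via $\Psi_i$ it becomes a Rayleigh quotient on the fixed manifold $M_\infty$ whose numerator and denominator converge uniformly (over $\phi$ in, say, the unit sphere of $L^2(M_\infty, d\mu_{g_\infty'})$) to those of $L_{g_\infty'}$. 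The conclusion $\lambda_1(L_{g_i'}) \to \lambda_1(L_{g_\infty'})$ then follows from a standard min-max continuity argument: the numerator of the $i$-th quotient differs from that of the limiting quotient by at most $\varepsilon_i \|\phi\|_{H^1}^2$ with $\varepsilon_i \to 0$, and one controls the $H^1$-norm of would-be minimizers by elliptic estimates (or simply uses that a uniform lower bound on the $i$-th quotient plus the $L^2$-normalization bounds the Dirichlet energy). Since $\lambda_1(L_{g_\infty'}) > 0$, we get $\lambda_1(L_{g_i'}) > 0$ for all sufficiently large $i$.

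Finally, for each such $i$, let $\phi_i > 0$ be the principal eigenfunction of $L_{g_i'}$; it is smooth and positive by standard elliptic theory. Then $g_i'' := \phi_i^{4/(n-2)} g_i'$ has $R_{g_i''} = c_n^{-1}\phi_i^{-(n+2)/(n-2)} L_{g_i'}\phi_i = c_n^{-1}\lambda_1(L_{g_i'})\phi_i^{-4/(n-2)} > 0$, so $g_i'' \in [g_i']$ is a psc-metric and therefore $Y(M_i;[g_i']) > 0$. Passing to the subsequence along which the Cheeger-Gromov convergence holds completes the proof. The main obstacle is the continuity of $\lambda_1$ under Cheeger-Gromov convergence — making the pulled-back Rayleigh quotients genuinely comparable requires care with the $H^1$-norms under the changing metrics $\Psi_i^* g_i'$ — but since the $C^2$-convergence makes all the relevant norms uniformly equivalent, this is routine; indeed the paper defers the general statement to \cite{BM}, so here it is enough to indicate this reduction.
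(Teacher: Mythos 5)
Your argument is correct, but the central analytic step is handled by a genuinely different mechanism than the paper's. You compare the pulled-back Rayleigh quotients directly: since $\Psi_i^*g_i'\to g_\infty'$ in $C^2$, the quadratic forms and volume forms differ by terms of size $\varepsilon_i\|\phi\|_{H^1}^2$, and after the absorption trick you allude to (the gradient-term error is not uniformly small on the $L^2$-unit sphere, but it can be absorbed because the error is a small multiple of the full numerator plus a bounded zeroth-order term) one gets $\lambda_1(L_{g_i'})\geq(1-o(1))\lambda_1(L_{g_\infty'})-o(1)>0$ for large $i$; positivity of the Yamabe constant then follows from the conformal transformation law, exactly as in the closed-manifold analogue of (\ref{eq:6}). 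The paper instead runs a compactness argument on the eigenfunctions: it normalizes the principal eigenfunctions by $\sup\phi_i=1$, notes that the uniformly $C^1$-bounded coefficients give a uniform bound on $\lambda_{1,i}$ via the Rayleigh quotient, applies the Schauder estimate (Theorem \ref{thm:schauder}) and Arzel\'a--Ascoli to extract a $C^{2,\alpha}$-limit $\phi_\infty\not\equiv0$ with $L_{g_\infty'}\phi_\infty=\lambda_{1,\infty}\phi_\infty$, and concludes $\lambda_{1,\infty}\geq\lambda_1(L_{g_\infty'})>0$, hence $\lambda_{1,i}>0$ eventually. Your route is more elementary and quantitative -- no elliptic regularity or further subsequence extraction is needed for the eigenvalue estimate, and it yields an explicit rate -- while the paper's route is the finite-dimensional shadow of the general Cheeger--Gromov spectral convergence result it cites from \cite{BM} and additionally produces the limiting eigenfunction. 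Both proofs implicitly use that, for a compact limit, the Cheeger--Gromov maps $\Psi_i$ eventually identify $M_i$ with $M_\infty$, so this is not a point of difference.
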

\begin{proof}
For each $i=1,2,\ldots$, we denote by $\lambda_{1,i}=\lambda_1(L_{g'_i})$
the principal eigenvalue of the conformal Laplacian on $(M_i,g'_i)$.
Let $\phi_i\in C^\infty(M_i)$ be the eigenfunction satisfying
\begin{equation}\label{eq:thm5}
L_{g'_i}\phi_i=\lambda_{1,i}\phi_i, \quad\quad \sup_{M_i}\phi_i=1.
\end{equation}
Since $\{(M_i,g'_i)\}_{i=1}^\infty$ is converging in the Cheeger-Gromov topology to a
compact manifold, the coefficients of the operator $L_{g'_i}$ are
bounded in the $C^1$-norm uniformly in $i$. In particular, there is a
constant $C_1>0$, independent of $i$, such that $|R_{g'_i}|\leq C_1$
on $M_i$. An obvious estimate on the Rayleigh quotient
(\ref{eq:Rayleigh}) shows that the sequence
$\{\lambda_{1,i}\}_{i=1}^\infty$ is uniformly bounded above and below.

This allows us to apply the Schauder estimate Theorem \ref{thm:schauder}
to $\phi_i$ uniformly in $i$. Using
Arzel\'a-Ascoli, we can find a subsequence, still denoted by $\{(M_i,g'_i)\}_{i=1}^\infty$,
$\{\phi_i\}_{i=1}^\infty$, and $\{\lambda_{1,i}\}_{i=1}^\infty$, a function 
$\phi_\infty\in C^\infty(M_\infty)$, and a number $\lambda_{1,\infty}$ such that
\begin{equation*}
\phi_i\to\phi_\infty \quad \lambda_{1,i}\to\lambda_{1,\infty}
\end{equation*}
where the former convergence is in the $C^{2,\alpha}$-topology.
This allows us to take the limit of equation (\ref{eq:thm5}) as $i\to\infty$.
Namely, $\phi_\infty$ is a non-zero solution of the equation
\begin{equation*}
L_{g_\infty}\phi_\infty=\lambda_{1,\infty} \phi_\infty
\end{equation*}
and so $\lambda_{1,\infty}
\geq \lambda_1(L_{g_\infty})$. On the other hand, we have assumed that
$\lambda_1(L_{g_\infty})>0$.  Hence $\lambda_{1,i}>0$ for all sufficiently
large $i$.
\end{proof}
Now we return to the proof of Theorem \ref{thm5}.  Since $X$ is a
stable minimal hypersurface of $Y$ with trivial normal bundle, Theorem
\ref{thm1} implies that $Y(X,[g_X])>0$. Now we may apply Lemma
\ref{lem:thm5} to find $Y(\p W_{a_i},[h_{a_i}])>0$ for sufficiently
large $i$.  Fix such an $i$
and let $h'_{a_i}\in[h_{a_i}]$ be a psc metric on $\p W_{a_i}$.
Since each $W_{a_i}$ is a stable minimal hypersurface with
free boundary and trivial normal bundle, Theorem \ref{thm3} states
that $Y(W_{a_i},\p W_{a_i};[\bar h_{a_i}])>0$ for all
$i\in\mathbb{N}$. 
Finally, we use Theorem \ref{thm:AB} to find a psc-metric $\tilde
h_{a_i}$ on $W_{a_i}$ which restricts to $h'_{a_i}+dt^2$ near $\p
W_{a_i}$.  This completes the proof of Theorem \ref{thm5} for $n\leq6$.

\appendix
\section{}
The main goal here is to provide technical details we used in
the main body of the paper. In Section \ref{sec:graphrep}, we recall
relevant facts on the minimal graph equation and provide the Schauder
estimates we use in the proof of Main Lemma.  Section \ref{sec:thm4}
is dedicated to Theorem \ref{thm4}. Here we recall necessary results
on currents and state well-known facts on their compactness and
regularity, adapted to our setting.  Section \ref{sec:doubling}
describes a simple doubling method which is a convenient technical
tool in the remaining sections. In Section
\ref{section:SFF}, we justify Step 2 from the proof of Claim
\ref{claim3}. In Section \ref{sec:dim8}, we discuss regularity issues
in dimension $8$ and prove Theorem \ref{thm5} for $n=7$.

\subsection{The minimal graph equation}
\label{sec:graphrep}
This section is concerned with local properties of hypersurfaces in
Riemannian manifolds. Throughout this section we will consider the
unit ball in Euclidian space $B=B_1(0)\subset\mathbb{R}^{n+1}$
equipped with a Riemannian metric $g$ and a hypersurface
$\Sigma^n\subset B$.  The balls of radius $s>0$ centered at
$x\in \Sigma$ induced by $g$ and $g|_\Sigma$ are denoted by
$B^g_s(x)\subset B$ and $B^\Sigma_s(x)\subset \Sigma$, respectively.
Assume there is a point $x_0\in\Sigma\cap B_{1/4}(0)$.

The following straight-forward Riemannian version of \cite[Lemma
2.4]{CM} allows us to consider $\Sigma$ locally as a graph over
$T_{x_0}\Sigma$.
\begin{lemma}\label{lem:Riemgraphical}
There is a constant $\mu_0>0$ so that if $g$ satisfies
\begin{equation}\label{eq:mu0}
\begin{array}{c}
{\displaystyle \sup_{x\in B}}\left| g_{ij}(x)-\delta_{ij}\right|\leq\mu_0,\quad
{\displaystyle \sup_{x\in B}}\left|\frac{\p  g_{ij}}{\p x^k}(x)\right|\leq\mu_0
\end{array}
\end{equation}
for $1\leq i,j,k\leq n+1$ in standard Euclidian coordinates, then the
following holds:
\noindent
If $s>0$ satisfies
\begin{equation*}
\begin{array}{c}
 \mathrm{dist}^\Sigma(x_0,\p\Sigma)\geq3s,\quad 
\sup_{\Sigma}|A_g|^2\leq\frac{1}{20 s^2},
\end{array}
\end{equation*}
then there is an open subset $U\subset
T_{x_0}\Sigma\subset\mathbb{R}^{n+1}$, a unit vector $\eta$ normal to
$T_{x_0}\Sigma$, and a function $u:U\to\mathbb{R}$ such that
\begin{enumerate}
\item $\mathrm{graph}(u)=B_{2s}^\Sigma(x_0)$;
\item $|\nabla u|\leq1$ and $|\nabla\nabla
  u|\leq\frac{1}{s\sqrt{2}}$ hold pointwise.
\end{enumerate}
Moreover, the connected component of $B_s^{g}(x_0)\cap\Sigma$ containing $x_0$
lies in $B_{2s}^\Sigma(x_0)$.
\end{lemma}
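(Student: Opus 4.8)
The plan is to run the proof of the Euclidean statement \cite[Lemma 2.4]{CM} with $g$ in place of the flat metric, using the $C^1$-bound (\ref{eq:mu0}) to show that each geometric quantity entering that argument is perturbed by at most $O(\mu_0)$, and then choosing $\mu_0$ small enough at the end. Translate so that $x_0$ is the origin and regard $T_{x_0}\Sigma$ as the linear subspace of $\mathbb{R}^{n+1}$ that it is (a notion that does not involve the metric). Let $\eta$ be a Euclidean unit normal to $T_{x_0}\Sigma$ and let $\pi\colon\mathbb{R}^{n+1}\to T_{x_0}\Sigma$ be the Euclidean orthogonal projection. The whole statement will follow once we show that, for $\mu_0$ small, $\pi$ restricts to a diffeomorphism of $B_{2s}^\Sigma(x_0)$ onto an open set $U\subset T_{x_0}\Sigma$ whose inverse has the form $y\mapsto y+u(y)\eta$ with $u$ obeying the asserted bounds.

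The core estimate is that the tangent plane of $\Sigma$ rotates slowly over $B_{2s}^\Sigma(x_0)$. Fix $x\in B_{2s}^\Sigma(x_0)$ and let $\gamma$ be a length-minimizing $g|_\Sigma$-geodesic in $\Sigma$ from $x_0$ to $x$; by the hypothesis $\mathrm{dist}^\Sigma(x_0,\partial\Sigma)\ge 3s$ it has length $<2s$ and stays in the interior of $\Sigma$. Transport a $g$-orthonormal frame of $T\Sigma$ along $\gamma$, parallel inside $(\Sigma,g|_\Sigma)$. Because this parallel transport preserves the tangent bundle of $\Sigma$, the ambient $g$-covariant derivative of each frame vector $e$ equals $A_g(\gamma',e)$ times the $g$-unit normal, of $g$-norm at most $\sup_\Sigma|A_g|\le(20)^{-1/2}s^{-1}$; replacing the ambient $g$-derivative by the Euclidean one costs only $O(\mu_0)$, since the Christoffel symbols of $g$ are $O(\mu_0)$ by (\ref{eq:mu0}). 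Integrating over length $<2s$ bounds the Euclidean angle between $T_x\Sigma$ and $T_{x_0}\Sigma$ by $2(20)^{-1/2}+O(\mu_0)$, which stays strictly below $\tfrac{\pi}{4}$ once $\mu_0$ is small (and in fact the same bound persists over $B_{3s}^\Sigma(x_0)$, since $3(20)^{-1/2}<\tfrac{\pi}{4}$). Hence $\pi|_{T_x\Sigma}$ is an isomorphism for every $x$ in this ball and, writing its inverse as $v\mapsto v+\ell_x(v)\eta$ for a linear functional $\ell_x$, we get $|\ell_x|<1$ — that is, $\Sigma$ is locally graphical over $T_{x_0}\Sigma$ with Euclidean gradient $<1$; this also forces the induced distance on $\Sigma$ and the ambient Euclidean distance to be comparable near $x_0$.

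To pass from ``locally graphical'' to a single graph, note that the previous step makes $\Pi:=\pi|_{B_{2s}^\Sigma(x_0)}$ a local diffeomorphism onto a connected open subset of $T_{x_0}\Sigma$; to upgrade this to a diffeomorphism onto its image one argues, as in \cite{CM}, that $B_{2s}^\Sigma(x_0)$ is a topological disk and that lifts through $\Pi$ of segments emanating from $\pi(x_0)$ remain inside it — here the distance comparability and the room between the radii $2s$ and $3s$ in the hypotheses are used. The only point requiring extra care is the disk assertion, because (\ref{eq:mu0}) controls only one derivative of $g$ and so gives no bound on the ambient curvature; the remedy is to argue through the \emph{Euclidean} second fundamental form $A_{\mathrm{eucl}}$ of $\Sigma$, which differs from $A_g$ by a multiple of $\mu_0$ (by the same Christoffel-symbol comparison), so that by the Gauss equation the sectional curvature of $(\Sigma,\mathrm{eucl}|_\Sigma)$ is at most $\tfrac{1}{2}|A_{\mathrm{eucl}}|^2\lesssim s^{-2}$; hence small geodesic balls of $(\Sigma,\mathrm{eucl}|_\Sigma)$ are embedded disks, and since $\mathrm{eucl}|_\Sigma$ and $g|_\Sigma$ are $C^0$-close by (\ref{eq:mu0}), so is $B_{2s}^\Sigma(x_0)$. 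With $\Pi$ a diffeomorphism onto $U:=\Pi(B_{2s}^\Sigma(x_0))$, the function $u(y):=\langle\Pi^{-1}(y),\eta\rangle$ satisfies $\mathrm{graph}(u)=B_{2s}^\Sigma(x_0)$.

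It remains to record the derivative bounds and the final clause. The inequality $|\nabla u|\le 1$ is the angle estimate, while a standard computation expresses the Euclidean Hessian $\nabla\nabla u$ as $A_{\mathrm{eucl}}$ times a factor depending only on $|\nabla u|$; combined with $|\nabla u|\le1$ and $|A_{\mathrm{eucl}}|\le(20)^{-1/2}s^{-1}+O(\mu_0)$ this gives $|\nabla\nabla u|\le(s\sqrt{2})^{-1}$ after shrinking $\mu_0$ once more — and it is precisely the constant $20$ in the hypothesis that leaves the necessary room. The connected component of $B_s^g(x_0)\cap\Sigma$ containing $x_0$ lies in a region over which $\Sigma$ is graphical with gradient $<1$, so the comparability of intrinsic and ambient distances confines it to intrinsic radius $<2s$, i.e.\ to $B_{2s}^\Sigma(x_0)$. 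The genuine difficulty in the whole argument is exactly the $C^1$-only control on $g$: all topological and comparison statements must be routed through the Euclidean extrinsic geometry of $\Sigma$, where $A_{\mathrm{eucl}}$ is controlled, while the rotating-plane estimate is run with the given bound on $A_g$, and the $O(\mu_0)$ errors must be tracked so that the clean constants $1$ and $(s\sqrt{2})^{-1}$ survive the perturbation.
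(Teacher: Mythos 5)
Your argument is essentially the proof the paper intends but never writes out: the paper states this lemma without proof, calling it a ``straight-forward Riemannian version'' of \cite[Lemma 2.4]{CM}, and your perturbative re-run of that Euclidean argument --- rotating tangent planes via $\int |A_g|$ along intrinsic geodesics, routing the topological/graphical and Hessian steps through the Euclidean second fundamental form $A_{\mathrm{eucl}}=A_g+O(\mu_0)$, and using the slack in the constant ($20$ in place of the sharp $16$, and $3s$ in place of $2s$) to absorb the $O(\mu_0)$ errors --- is exactly that adaptation. The one point you should make explicit is that the additive $O(\mu_0)$ errors must be compared with $1/s$, so you need $s$ bounded by a universal constant before ``shrinking $\mu_0$ once more'' makes sense; this does follow from the hypotheses (since $\Sigma\subset B_1(0)$, $x_0\in B_{1/4}(0)$, and a unit-speed $g\vert_\Sigma$-geodesic from $x_0$ has Euclidean curvature at most $\tfrac{1}{\sqrt{20}\,s}+O(\mu_0)$, it must reach $\p\Sigma$ within a length of order one, forcing $3s$ to be of order one), but it deserves a sentence, after which your proof is complete.
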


Now we will give a useful expression for the 
mean curvature of a graph.
Let $U\subset\mathbb{R}^n$ be an
open set with standard coordinates $x'=(x^1,\ldots,x^n)$ 
and let $g$ be a Riemannian metric on
$U\times\mathbb{R}\subset\mathbb{R}^{n+1}$. For a function
$u:U\to\mathbb{R}$, consider its graph
\[
\mathrm{graph}(u)=\{(x',u(x'))\in\mathbb{R}^{n+1}\colon x'\in U\}.
\]
For $i=1,\ldots,n$, we have the tangential vector fields
$E_i=\frac{\p}{\p x^i}+\frac{\p u}{\p x^i}\frac{\p}{\p x^{n+1}}$ and the
upward-pointing unit vector field $\nu$ normal to $\mathrm{graph}(u)$.
Writing $h_{ij}=g(E_i,E_j)$ for the restriction metric, the mean 
curvature of $\mathrm{graph}(u)$ can be written
\begin{equation}\label{eq:mean}
\begin{array}{lcl}
  H_g&= & h^{ij}g(\nu,\nabla_{E_i}E_j)\\
  \\
  &= & \left(g^{ij}-\frac{\nabla^i u\nabla^j u}{1+|\nabla u|^2}\right)\Big{[}\frac{\p^2u}{\p x_i\p x_j}+\Gamma^{n+1}_{ij}+\frac{\p u}{\p x_i}\Gamma^{n+1}_{n+1\; j}+\frac{\p u}{\p x_j}\Gamma^{n+1}_{n+1\; i}+\frac{\p u}{\p x_i} \frac{\p u}{\p x_j}\Gamma^{n+1}_{n+1\; n+1} \\
  \\
& & \quad -\frac{\p u}{\p x_r}\left(\Gamma^r_{ij}+\frac{\p u}{\p x_i}\Gamma^r_{n+1\; j}+\frac{\p u}{\p x_j}\Gamma^r_{i\; n+1}+\frac{\p u}{\p x_i}\frac{\p u}{\p x_j}\Gamma^r_{n+1\; n+1}\right)\Big{]},
\end{array}
\end{equation}
see \cite[Section 7.1]{CM} for a detailed exposition in the
$3$-dimensional case.

Next, we will state a general version of the Schauder estimates 
for elliptic operators on Euclidian space. It is 
applied to the geometric setting in Section \ref{sec3}.
\begin{theorem}\cite[Corollary 6.3]{GT}\label{thm:schauder}
Let $U\subset\mathbb{R}^n$ be an open set and let $\alpha\in(0,1)$. Suppose 
$u\in C^{2,\alpha}(U)$ satisfies a uniformly elliptic equation
\[
Lu=a^{ij}(x)u_{ij}+b^i(x)u_i+c(x)u=0
\]
with $a^{ij},b^i,c\in C^\alpha(U)$ and ellipticity constant $\lambda>0$. 
If $U'\subset\subset U$ 
with $\mathrm{dist}^{U}(U',\p /u)= d$, then there
is a constant $C>0$, depending on $d,\lambda,||a^{ij}||_{C^\alpha(U)},
||b^i||_{C^\alpha(U)},||c||_{C^\alpha(U)},n$, and $\alpha$, such that
\begin{equation}\label{eq:GTest}
||u||_{C^{2,\alpha}(U')}\leq C||u||_{C^0(U)}.
\end{equation}
\end{theorem}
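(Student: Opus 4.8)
The plan is to reduce to the constant-coefficient model and then perturb. First I would establish the Schauder estimate for the Laplacian: if $\Delta w = f$ on $B_1$ with $f \in C^\alpha(B_1)$, then $\|w\|_{C^{2,\alpha}(B_{1/2})} \le C\big(\|w\|_{C^0(B_1)} + \|f\|_{C^\alpha(B_1)}\big)$. I would split $w = N f + (w - Nf)$, where $Nf(x) = \int_{B_1}\Gamma(x-y)f(y)\,dy$ is the Newtonian potential: the second derivatives of $Nf$ satisfy $[D^2 Nf]_{C^\alpha(B_{3/4})} \le C[f]_{C^\alpha(B_1)}$ by the classical singular-integral estimate, obtained by comparing the values at two points $x,\bar x$, splitting the integral over the ball of radius $2|x-\bar x|$ about the midpoint and its complement, and using the cancellation properties of $D^2\Gamma$; the remainder $w - Nf$ is harmonic, so interior derivative estimates for harmonic functions bound all of its $C^{2,\alpha}$ norm by $\|w-Nf\|_{C^0}$. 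An affine change of variables then gives the same estimate for any constant-coefficient operator $a^{ij}_0 u_{ij} = f$, with the constant controlled by the ellipticity constant $\lambda$.

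Next comes the freezing-coefficients step. Fix $x_0 \in U'$ and a radius $\rho$ to be chosen small. On $B_\rho(x_0)$ rewrite $Lu = 0$ as $a^{ij}(x_0)u_{ij} = F$ with $F := -\big(a^{ij}(x) - a^{ij}(x_0)\big)u_{ij} - b^i(x)u_i - c(x)u$. Rescaling $B_\rho(x_0)$ to the unit ball and applying the model estimate bounds $[D^2 u]_{C^\alpha(B_{\rho/2}(x_0))}$ by $\|u\|_{C^0}$ together with $[F]_{C^\alpha}$; but $\|a^{ij}(\cdot) - a^{ij}(x_0)\|_{C^0(B_\rho)} \le [a^{ij}]_{C^\alpha}\rho^\alpha$, which is small for $\rho$ small, so the term $\big(a^{ij}(x)-a^{ij}(x_0)\big)u_{ij}$ is reabsorbed into the left-hand side. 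The lower-order contributions $b^i u_i$, $cu$ (which is where one needs $b^i, c \in C^\alpha$, not merely $C^0$) and the $C^\alpha$-oscillation of the coefficient difference are handled via the interpolation inequality $\|u\|_{C^{1,\alpha}} \le \varepsilon\|u\|_{C^{2,\alpha}} + C(\varepsilon)\|u\|_{C^0}$, again absorbing the $\varepsilon$-term. This yields a local estimate $\|u\|_{C^{2,\alpha}(B_{\rho/2}(x_0))} \le C\|u\|_{C^0(B_\rho(x_0))}$ with $\rho$ depending only on $\lambda$ and the $C^\alpha$-norms of the coefficients.

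Finally I would globalize. Since $\mathrm{dist}^U(U',\p U) = d > 0$, once $\rho < d$ one can cover $U'$ by finitely many balls $B_{\rho/2}(x_k)$ with $B_\rho(x_k) \subset\subset U$, apply the local estimate on each, and patch; to make the constants behave under this patching it is cleanest to run the entire argument in the weighted interior H\"older norms of \cite{GT}, in which the absorption steps and the chaining are built in. The result is $\|u\|_{C^{2,\alpha}(U')} \le C\|u\|_{C^0(U)}$ with $C = C\big(d,\lambda,\|a^{ij}\|_{C^\alpha(U)},\|b^i\|_{C^\alpha(U)},\|c\|_{C^\alpha(U)},n,\alpha\big)$. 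The hard part will be the model singular-integral estimate $[D^2 Nf]_{C^\alpha} \le C[f]_{C^\alpha}$: all the genuine analysis --- the cancellation that keeps the kernel integrable --- lives there, and once it is available the perturbation and covering steps are routine bookkeeping. (Alternatively one could replace this potential-theoretic core by Campanato's characterization of $C^\alpha$ in terms of the decay of the $L^2$-oscillation of $D^2 u$, proving the decay by comparing $u$ on small balls with the harmonic function sharing its boundary data; the overall structure is unchanged.)
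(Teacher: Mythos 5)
This statement is not proved in the paper at all: it is quoted verbatim as \cite[Corollary 6.3]{GT}, so there is no internal argument to compare against. Your sketch is essentially the standard proof from that reference (Newtonian-potential/constant-coefficient model estimate, freezing coefficients with absorption via smallness of the coefficient oscillation and the interpolation inequality, then a covering argument run in the weighted interior H\"older norms), and it is correct in outline, so it reconstructs precisely the argument the paper delegates to Gilbarg--Trudinger.
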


\begin{cor}\label{cor:schauder}
Suppose the unit ball $B=B_1(0)\subset \mathbb{R}^{n+1}$ 
is equipped with a Riemannian metric $g$ satisfying 
\begin{equation*}
\sup_{x\in B}\left| g_{ij}(x)-\delta_{ij}\right|\leq\mu_0,\quad
\sup_{x\in B}\left|\frac{\p  g_{ij}}{\p x^k}(x)\right|\leq\mu_0
\end{equation*}
in Euclidian coordinates for all $1\leq i,j,k\leq n+1$ where $\mu_0$
is the constant from Lemma \ref{lem:Riemgraphical}. 
Let $C>0$ be given and set $r=\min(\frac18,\frac{1}{\sqrt{80C}})$.
Assume that
$\Sigma\subset B$ is a properly embedded minimal hypersurface with 
respect to $g$ such that 
$\sup_B|A^g|^2\leq C$ and there is a point $x_0\in B_{r}(0)\cap \Sigma$.
Then there is a smooth function $u:U\to\mathbb{R}$ on $U\subset T_{x_0}\Sigma$
and a unit normal vector to $T_{x_0}\Sigma$ such that
\begin{enumerate}
\item $\mathrm{graph}(u)=B^\Sigma_{2r}(x_0)$;
\item $|\nabla u|\leq1$ and $|\nabla\nabla
  u|\leq\frac{1}{s\sqrt{2}}$ hold pointwise;
\item for each $k\geq1$ and $\alpha\in(0,1)$ there is a constant $C^\prime>0$, 
depending only on 
$n,k,\alpha,$ and $||g||_{C^{k,\alpha}(B)}$, such that
\begin{equation}\label{eq:schauder}
||u||_{C^{k,\alpha}(U)}\leq C^\prime.
\end{equation}
\end{enumerate}
Moreover, the connected component of $B_r(x_0)\cap\Sigma$ containing $x_0$
is contained in $B^\Sigma_{2r}(x_0)$.
\end{cor}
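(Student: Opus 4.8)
The plan is to read off items (1), (2), and the closing assertion of the Corollary directly from Lemma \ref{lem:Riemgraphical}, and then to obtain the $C^{k,\alpha}$-bounds of item (3) by a standard elliptic bootstrap on the minimal graph equation (\ref{eq:mean}). Concretely, I would apply Lemma \ref{lem:Riemgraphical} with its parameter $s$ taken equal to $r$. Its hypotheses are easily verified: $r\le\frac18<\frac14$ gives $x_0\in\Sigma\cap B_{1/4}(0)$; from $r\le\frac1{\sqrt{80C}}$ one gets $80Cr^2\le1$, hence $\sup_\Sigma|A^g|^2\le C\le\frac1{80r^2}\le\frac1{20r^2}$; and since any curve in $\Sigma$ joining $x_0$ to $\partial\Sigma=\Sigma\cap\partial B$ is a curve in $B$, while $|x_0|<r\le\frac18$ and $g$ is $\mu_0$-close to the Euclidean metric, we have (for $\mu_0$ small enough, which is harmless) $\mathrm{dist}^\Sigma(x_0,\partial\Sigma)\ge\mathrm{dist}^g_B(x_0,\partial B)\ge\frac12(1-r)\ge\frac7{16}>\frac38\ge 3r$. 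The Lemma then supplies the hyperplane $T_{x_0}\Sigma$, a unit normal $\eta$, and a smooth $u:U\to\mathbb R$ with $\mathrm{graph}(u)=B^\Sigma_{2r}(x_0)$, with $|\nabla u|\le1$ and $|\nabla\nabla u|\le\frac1{r\sqrt2}$ pointwise, and with the connected component of $B^g_r(x_0)\cap\Sigma$ through $x_0$ contained in $B^\Sigma_{2r}(x_0)$ — that is, items (1), (2), the final assertion, and uniform $C^{1,1}$-control on $u$.

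For item (3), I would first perform a Euclidean rotation taking $T_{x_0}\Sigma$ to $\{x^{n+1}=0\}$ and $\eta$ to $\partial_{x^{n+1}}$; this only conjugates $g$ by an orthogonal matrix, so it preserves closeness to $\delta$ and the norms $\|g\|_{C^{k,\alpha}(B)}$ up to dimensional factors. Then $B^\Sigma_{2r}(x_0)=\mathrm{graph}(u)$ sits over an open set $U\subset\mathbb R^n$ containing a Euclidean ball about the origin of radius comparable to $r$, and minimality $H^g\equiv0$ says that the right-hand side of (\ref{eq:mean}) vanishes identically. I would rewrite this as
\begin{equation*}
a^{ij}(x',u,\nabla u)\,u_{ij}=\hat b(x'),\qquad a^{ij}=g^{ij}-\frac{\nabla^i u\,\nabla^j u}{1+|\nabla u|^2},
\end{equation*}
where $\hat b(x')$ collects the Christoffel-symbol terms of (\ref{eq:mean}) evaluated along $(x',u(x'),\nabla u(x'))$. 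Because $|\nabla u|\le1$ and $g$ is close to Euclidean, $(a^{ij})$ is uniformly elliptic with constants depending only on $n$; and since $u$ is Lipschitz (its graph $B^\Sigma_{2r}(x_0)$ has Euclidean diameter a bounded multiple of $r$, so $|u|$ is bounded, together with $|\nabla u|\le1$) while $\nabla u$ is Lipschitz by the Hessian bound of Lemma \ref{lem:Riemgraphical}, both $x'\mapsto a^{ij}(x',u,\nabla u)$ and $\hat b$ are Lipschitz, hence lie in $C^\alpha(U)$ for every $\alpha\in(0,1)$, with norms controlled by $n$, $r$, $\mu_0$ and $\|g\|_{C^1(B)}$.

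Then I would bootstrap. Treating the coefficients as known $C^\alpha$ functions of $x'$, the interior Schauder estimate (the inhomogeneous version of Theorem \ref{thm:schauder}; see \cite[Theorem 6.2]{GT}) yields $\|u\|_{C^{2,\alpha}(U')}\le C'$ on subdomains $U'\subset\subset U$; then $\nabla u\in C^{1,\alpha}$, so $a^{ij}(x',u,\nabla u)$ and $\hat b$ are $C^{1,\alpha}$ (with norms now also involving $\|g\|_{C^{1,\alpha}(B)}$), and a further application of Schauder gives a $C^{3,\alpha}$-bound. Iterating $k-1$ times produces $\|u\|_{C^{k,\alpha}}\le C'$ with $C'$ depending only on $n$, $k$, $\alpha$ and $\|g\|_{C^{k,\alpha}(B)}$, the remaining parameters being pinned down by $C$ and absolute constants. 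Each stage shrinks the domain slightly, so the bound is obtained on a fixed slightly smaller subdomain of $U$ — which suffices, since in Section \ref{sec3} the estimate is in any case used on a smaller ball $B^{P_\infty}_{2r}(0)$.

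The step that requires the most care is closing the bootstrap with constants of the advertised form: because the $a^{ij}$ depend on $\nabla u$, the ellipticity and Hölder constants at the first stage are only under control thanks to the a priori bounds $|\nabla u|\le1$ and $|\nabla\nabla u|\le\frac1{r\sqrt2}$ coming from Lemma \ref{lem:Riemgraphical}; once those are in hand, the remaining iteration is routine. In this sense the analytic content of the Corollary is precisely the gradient and Hessian estimate of the Lemma, and item (3) is its regularity-theoretic consequence.
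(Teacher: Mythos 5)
Your proposal is correct and follows essentially the same route as the paper's proof: apply Lemma \ref{lem:Riemgraphical} (with $s=r$) to get the graphical representation with uniform $C^{1,1}$ control, then view the minimal graph equation (\ref{eq:mean}) as an elliptic equation with $C^\alpha$ coefficients and bootstrap via the Schauder estimates of Theorem \ref{thm:schauder} to reach $C^{k,\alpha}$ on slightly smaller subdomains. You supply somewhat more detail than the paper does (explicit verification of the lemma's hypotheses, and treating the Christoffel terms as an inhomogeneous right-hand side rather than forcing the equation into the homogeneous form of Theorem \ref{thm:schauder}), but these are refinements of the same argument, not a different one.
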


\begin{proof}
The choice of radius $r$ allows us to apply Lemma \ref{lem:Riemgraphical}
to obtain an open subset $U\subset
T_{x_0}\Sigma\subset\mathbb{R}^{n+1}$, a unit vector $\eta$ normal to
$T_{x_0}\Sigma$, and a smooth function $u:U\to\mathbb{R}$ such that
 $\mathrm{graph}(u)=B_{2s}^\Sigma(x_0)$, $|\nabla u|\leq1$, and $|\nabla\nabla
u|\leq\frac{1}{s\sqrt{2}}$ on $U$. 
Since $\Sigma$ is minimal, $u$ solves equation $H=0$.
Now since $||u||_{C^{1,\alpha}(U)}$ is bounded for any fixed $\alpha\in(0,1)$, 
one can inspect the expression \ref{eq:mean} to 
see that $u$ solves a linear elliptic equation with coefficients 
bounded in $C^\alpha$ in terms of $\mu_0$ and $r$. 
This allows us to apply Theorem \ref{thm:schauder} to obtain the estimate 
$||u||_{C^{2,\alpha}(U')}\leq C||u||_{C^0(U)}$ for some $C>0$ depending
only on $\mu_0$ and $r$. Standard elliptic estimates \cite[Section 6]{GT}
give a similar estimate in the $C^{k,\alpha}$-norm for any $k$.
\end{proof}
\subsection{Details on Theorem \ref{thm4}}\label{sec:thm4} 
Let us recall some basic notions from theory of
integer multiplicity currents.  The main reference for this material is
\cite[Chapter 4]{F}.

For an open subset $U\subset\mathbb{R}^{n+k}$, let $\Omega^n(U)$
denote the space of all $n$-forms on $\mathbb{R}^{n+k}$ with compact
support in $U$.  An {\emph{$n$-current}} on $U$ is a
continuous linear functional $T:\Omega^n(U)\to \mathbb{R}$ and 
collection of such $T$ for a vector space $\mathcal{D}_{n}(U)$.  The
{\emph{boundary}} of an $n$-current $T$ is the $(n-1)$-current 
$\p T$ defined by
\begin{equation*}
(\p T)(\omega)=T(d\omega),\quad\quad \omega\in\Omega^{n-1}(U).
\end{equation*}
The \emph{mass} of 
$T\in \mathcal{D}_{n}(U)$ is given by
$\mathbf{M}(T)=\sup\{T(\omega):\omega\in\Omega^{n}(U),|\omega|\leq1\}$.
For example, if $T$ is given by integration along a smooth oriented 
submanifold $M$, then $\mathbf{M}(T)=\vol(M)$.
 
Let $\mathcal{H}^n$ denote the $n$-dimensional Hausdorff measure 
on $\mathbb{R}^{n+k}$.  
A current $T\in\mathcal{D}_n(U)$ is called 
\emph{integer multiplicity rectifiable} (or simply \emph{rectifiable}) if it takes the form
\begin{equation}\label{eq:rectifiable}
T(\omega)=\int_M\omega(\xi(x))\theta(x) d\mathcal{H}^n(x),\quad\quad\omega\in\Omega^n(U), \ \ \ \mbox{where}
\end{equation}
\begin{enumerate}
\item $M\subset U$ is $\mathcal{H}^n$-measurable and countably $n$-rectifiable, see \cite[Section 3.2.14]{F};
\item $\theta:M\to\mathbf{Z}$ is locally $\mathcal{H}^n$-integrable;
\item for $\mathcal{H}^n$-almost every $x\in M$,
$\xi:M\to\Lambda^nT\mathbb{R}^{n+k}$ takes the form
$\xi(x)=e_1\wedge\ldots\wedge e_n$ where $\{e_i\}_{i=1}^n$ form
an orthonormal basis for the approximate tangent space $T_xM$,
see \cite[Section 3.2.16]{F}.
\end{enumerate}
\begin{rem}
The above definition of integer multiplicity rectifiable currents can
also be extended to Riemannian manifolds $(M,g)$ -- one defines the
mass of a current using the Hausdorff measure given by the metric
$g$.
\end{rem}
\noindent
The {\emph{regular}} set $\mathrm{reg}(T)$ of a rectifiable
$n$-current $T$ is given by the set of points $x\in \mathrm{spt}(T)$
for which there exists an oriented $n$-dimensional oriented
$C^1$-submanifold $M\subset U$, $r>0$, and $m\in\mathbf{Z}$ satisfying
\[
T|_{B_r(x)}(\omega)=m\cdot\int_{M\cap B_r(x)}\omega,\quad\quad\forall\omega\in\Omega^n(U).
\] 
The {\emph{singular}} set $\mathrm{sing}(T)$ is given by
$\spt(T)\setminus\mathrm{reg}(T)$.  The abelian group of
$n$-dimensional \emph{integral flat chains} on $U$ is given by
\[
\mathcal{F}_n(U)=\{R+\partial S\colon R\in\mathcal{D}_n(U)\text{ and }
	S\in\mathcal{D}_{n+1}(U)\text{ are rectifiable}\}.
\]
Now we consider subsets $B\subset A\subset U$. We have the group of 
{\emph{integral flat cycles}}
\[
\mathcal{C}_n(A,B)=\{T\in\mathcal{F}_n(U)\colon
	\mathrm{spt}(T)\subset A,\mathrm{spt}(\p T)\subset B,\text{ or }n=0\}
\]
and the subgroup of {\emph{integral flat boundaries}}
\[
\mathcal{B}_n(A,B)=\{T+\p S\colon T\in\mathcal{F}_n(U),\spt(T)\subset B,
	S\in\mathcal{F}_{n+1}(U),\spt(S)\subset A\}.
\]
The quotient groups
$
\mathbf{H}_n(A,B)=\mathcal{C}_n(A,B)/\mathcal{B}_n(A,B)
$
are the $n$-dimensional {\emph{integral current homology groups}}.

There is a natural transformation between the integral singular homology
functor and the integral current homology functor which induces an isomorphism
$H_n(A,B;\mathbf{Z})\cong\mathbf{H}_n(A,B)$ in the category
of local Lipschitz neighborhood retracts, see \cite[Section 4.4.1]{F}.
This isomorphism can be combined with a basic compactness result
for rectifiable currents to find volume minimizing representatives 
of homology classes.
\begin{lemma}\label{lem:mincurrent}
Let $(M,\bar g)$ be a compact $(n+1)$-dimensional Riemannian manifold with boundary
and consider an integral homology class 
$\alpha\in H_n(M,\p M;\mathbf{Z})$. Let $\tilde{\alpha}\in\mathbb{H}_n(M,\p M)$
be the image of $\alpha$ under the isomorphism 
$H_n(M,\p M;\mathbf{Z})\to \mathbf{H}_n(M,\p M)$. Then there exists a homologically 
volume minimizing integer multiplicity rectifiable current $T\in\tilde\alpha$.
\end{lemma}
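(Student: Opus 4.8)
The plan is to obtain $T$ as the limit of a minimizing sequence, using the compactness theorem for integral currents. First I would transfer the problem to a fixed Euclidean space: embed $M$ into some $\mathbb{R}^{N}$ (possible since $M$ is compact), and work with currents supported in $M$, using the Hausdorff measure induced by $\bar g$ to define mass. Let $\tilde\alpha\in\mathbf{H}_n(M,\p M)$ be the image of $\alpha$ under the natural isomorphism $H_n(M,\p M;\mathbf{Z})\cong\mathbf{H}_n(M,\p M)$ recalled in the excerpt; in particular $\tilde\alpha$ is nonempty, so set
\begin{equation*}
m_0=\inf\{\,\mathbf{M}(T)\colon T\in\tilde\alpha\,\}\ge 0,
\end{equation*}
and pick a minimizing sequence $\{T_k\}$ of rectifiable currents in the class with $\mathbf{M}(T_k)\to m_0$.

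The main step is to produce a convergent subsequence with a rectifiable limit still in the class. Since each $T_k$ represents $\tilde\alpha$, the boundaries $\p T_k$ all lie in a fixed class in $\mathbf{H}_{n-1}(\p M)$, and one can arrange (replacing $T_k$ by $T_k+\p S_k$ for suitable $S_k$ if necessary, or simply fixing one boundary cycle since $\tilde\alpha$ is a \emph{relative} class) that $\p T_k$ is supported in $\p M$ with mass controlled — here one uses compactness of $\p M$ and that $\p$ of a mass-minimizing sequence stays bounded. With $\mathbf{M}(T_k)$ bounded and $\mathbf{M}(\p T_k)$ bounded, and all supports contained in the fixed compact set $M$, the Federer--Fleming compactness theorem for integral currents (see \cite[Chapter 4]{F}) yields a subsequence converging in the flat norm to an integer multiplicity rectifiable current $T$ with $\spt(T)\subset M$ and $\spt(\p T)\subset\p M$. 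Lower semicontinuity of mass under flat convergence then gives $\mathbf{M}(T)\le\liminf_k\mathbf{M}(T_k)=m_0$.

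It remains to check $T\in\tilde\alpha$, i.e.\ $T-T_k\in\mathcal{B}_n(M,\p M)$ for large $k$. This follows because flat convergence means $T-T_k=R_k+\p S_k$ with $\mathbf{M}(R_k)+\mathbf{M}(S_k)\to 0$; for $k$ large the flat distance is small enough that, combined with the boundedness of masses, one may take $R_k$ supported in a neighborhood of $\p M$ deformation-retracting to $\p M$ (or directly invoke the deformation theorem) to conclude $T-T_k$ is an integral flat boundary relative to $(M,\p M)$. Hence $T$ represents $\tilde\alpha$ and, being of minimal mass in its class, $\mathbf{M}(T)=m_0$, so $T$ is homologically volume minimizing, proving Lemma \ref{lem:mincurrent}.

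The part I expect to require the most care is the bookkeeping around the \emph{relative} homology: ensuring that the boundaries $\p T_k$ of the minimizing sequence can be taken to have uniformly bounded mass and supports in $\p M$, and then that the flat limit $T$ genuinely lands in the prescribed relative class rather than a different one. The mass minimization and lower semicontinuity are routine once compactness is in place; the isomorphism $H_n(M,\p M;\mathbf{Z})\cong\mathbf{H}_n(M,\p M)$ (valid since a compact manifold with boundary is a local Lipschitz neighborhood retract) is quoted from the excerpt and handles the translation between singular and current homology.
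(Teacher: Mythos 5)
The outline you give is the standard direct method, but it has a genuine gap exactly at the point you yourself flag as delicate: the uniform bound on $\mathbf{M}(\p T_k)$. For a minimizing sequence in a \emph{relative} class there is no reason whatsoever that the boundary masses stay bounded — minimizing $\mathbf{M}(T_k)$ says nothing about $\mathbf{M}(\p T_k)$, and your two proposed remedies do not work as stated. ``Simply fixing one boundary cycle'' changes the problem: the infimum over currents in $\tilde\alpha$ with a prescribed boundary can be strictly larger than $m_0$, since the whole point of the relative class is that the boundary is free to move in $\p M$. And ``replacing $T_k$ by $T_k+\p S_k$ for suitable $S_k$'' is the right kind of move, but it is precisely the step that needs an actual construction (e.g.\ slicing $T_k$ by the distance function in a collar of $\p M$ at a good level $t_k$, discarding the part of $T_k$ within the collar and closing up with the slice pushed into $\p M$, so as to obtain a comparison current in the same class with controlled mass \emph{and} controlled boundary mass). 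Without that, the Federer--Fleming compactness theorem you invoke from \cite[Chapter 4]{F} simply does not apply, and the whole argument stalls. The final step — checking that the flat limit lies in $\tilde\alpha$, i.e.\ that $\mathcal{C}_n(M,\p M)$ and $\mathcal{B}_n(M,\p M)$ behave well under flat limits with supports pushed back into $M$ and $\p M$ — is also only gestured at; it again needs the Lipschitz neighborhood retract structure and the deformation theorem in a quantitative way.

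The paper avoids all of this bookkeeping: it isometrically embeds $(M,\bar g)$ into Euclidean space by the Nash theorem (so Riemannian volume literally equals Euclidean mass, which your non-isometric embedding with $\bar g$-induced mass would additionally have to reconcile with Federer's Euclidean statements) and then quotes Federer's existence theorem \cite[Section 5.1.6]{F}, which already asserts that every class in $\mathbf{H}_n(A,B)$ of a pair of compact Lipschitz neighborhood retracts contains a mass-minimizing representative; the minimizer is then pulled back by the isometry. In other words, the content you are trying to re-derive (boundary control for the minimizing sequence, closedness of the relative class under flat limits) is exactly the content of the cited result. Your proof would be fine if you either carried out the collar-slicing replacement argument in detail or, as the paper does, cited the relative existence theorem rather than only the compactness theorem.
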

\begin{proof}
By the Nash embedding theorem there is an isometric embedding 
$\iota:M\to\mathbb{R}^{n+k}$
for some sufficiently large $k$. Let $\hat{M}$ be the image of this embedding and set 
$\hat \alpha=\iota_*\tilde\alpha\in\mathbf{H}_n(\hat M,\p\hat M)$. 
Applying the compactness result in \cite[Section 5.1.6]{F},
we obtain a homologically volume minimizing current 
$\hat T\in \mathcal{C}_n(\hat M,\p \hat M)$ representing $\hat \alpha$. Since $\iota$ is an
isometry, $(\iota^{-1})_*\hat T$ is the desired current.
\end{proof}
Since Lemma \ref{lem:mincurrent} guarantees the existence of 
homologically volume minimizing
representative for the homology class $\alpha$ from the hypothesis of Theorem 
\ref{thm4}, the final ingredient is regularity theory for volume minimizing 
rectifiable currents with free boundary.
The following is a regularity theorem 
due to M. Gr\"unter \cite[Theorem 4.7]{G} adapted to the context of an ambient Riemannian
metric. See \cite{L,FL,SSY} for Riemannian adaptations of similar results.
\begin{theorem}\label{thm:reg}
Let $S\subset\mathbb{R}^{n+1}$ be an $n$-dimensional smooth submanifold,
$U\subset\mathbb{R}^{n+1}$ an open set with 
$\partial S\cap U=\emptyset$, and $g$ a Riemannian metric on $U$ with 
bounded injectivity radius and sectional curvature.
Suppose $T\in\mathcal{F}_n(U)$ with $\spt(\p T)\subset S$ satisfies
$\mathbf{M}_g(T)\leq\mathbf{M}_g(T+R)$ 
for all open $W\subset\subset U$ and all
$R\in\mathcal{F}_n(U)$ with $\spt(R)\subset W$ and $\spt(\p R)\subset
S$. Then we have
\begin{itemize}
\item $\mathrm{sing}(T)=\emptyset$ if $n\leq 6$
\item $\mathrm{sing}(T)$ is discrete for $n=7$
\item $\mathrm{dim}_{\mathcal{H}}(\mathrm{sing}(T))\leq n-7$ if $n>7$
\end{itemize}
where $\mathrm{dim}_{\mathcal{H}}(A)$ denotes the Hausdorff dimension of a subset $A\subset U$.
\end{theorem}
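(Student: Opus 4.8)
The plan is to observe that once $T$ is known to be locally an integer-multiplicity rectifiable current---which follows from the minimality hypothesis together with the standard compactness and structure theory for integral flat chains, so that $\mathrm{reg}(T)$ and $\mathrm{sing}(T)$ are well defined---the statement splits into two regularity problems treated independently: the behaviour of $\mathrm{sing}(T)$ at points of $U\setminus S$ (\emph{interior regularity}) and at points of $S$ (\emph{free-boundary regularity}). In each case I would localize and reduce to the corresponding \emph{Euclidean} theorem by a standard rescaling argument.

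First I would handle the interior singular set $\mathrm{sing}(T)\cap(U\setminus S)$. If $x\in\mathrm{spt}(T)\setminus S$ then, since $\mathrm{spt}(\partial T)\subset S$, there is a ball $B_\rho(x)\subset\subset U\setminus S$ with $\mathrm{spt}(\partial T)\cap B_\rho(x)=\emptyset$, so the restriction of $T$ to $B_\rho(x)$ is an area-minimizing integral current \emph{without boundary} in the Riemannian ball $(B_\rho(x),g)$. The classical interior regularity theory of De Giorgi--Federer--Almgren--Simons carries over to a Riemannian ambient metric: the density ratio $r\mapsto e^{Cr}r^{-n}\mathbf{M}(T\,\llcorner\,B_r(x))$ is monotone with $C$ controlled by the sectional curvature, minimizing currents with mass bounds are compact, and any blow-up sequence at $x$ (rescaling $g$ to the flat metric on $T_xU$) sub-converges to an area-minimizing cone in Euclidean space. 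Simons' classification of low-dimensional minimizing hypercones, together with Federer's dimension-reduction argument, then gives $\mathrm{sing}(T)\cap(U\setminus S)=\emptyset$ for $n\le 6$, discrete for $n=7$, and of Hausdorff dimension at most $n-7$ for $n>7$; see \cite{F,Morgan}.

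Next I would treat boundary points $x\in S\cap\mathrm{spt}(T)$ by invoking Gr\"unter's free-boundary regularity theorem \cite[Theorem 4.7]{G}. Since that theorem is stated for the flat ambient metric, I localize near $x$ using Fermi coordinates for $S$, in which $S$ becomes a piece of the hyperplane $\{x_{n+1}=0\}$ and $g$ becomes a metric that is $C^\infty$-close to the Euclidean metric on a ball of radius as small as desired. All the quantitative ingredients in Gr\"unter's proof---monotonicity of the boundary density ratio, the $\varepsilon$-regularity theorem producing a graphical (hence, by Schauder estimates, smooth) representation near low-density boundary points, and the dimension reduction for the exceptional set---are stable under such $C^\infty$-small perturbations of $g$ and of $S$, with constants depending only on the injectivity-radius and curvature bounds in the hypotheses; and the blow-ups at $x$ converge to area-minimizing half-space cones with boundary in a hyperplane in flat space, to which the Euclidean classification applies unchanged. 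This is exactly the kind of Riemannian adaptation carried out in \cite{L,FL,SSY}. Hence the boundary contribution to $\mathrm{sing}(T)$ obeys the same dimensional bounds, and combining the two cases completes the proof.

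The heart of the matter---and the main obstacle---is this Riemannian adaptation of Gr\"unter's boundary theory: one must verify that monotonicity, $\varepsilon$-regularity, compactness of blow-up sequences, and Federer--Almgren dimension reduction all survive the passage from the flat metric to a $C^\infty$-close Riemannian metric with $S$ flattened, and that the resulting blow-up cones are genuine Euclidean minimizers so that the flat-space cone classification can be quoted verbatim. The interior case is entirely routine; the free-boundary case is where the cited works \cite{G,L,FL,SSY} do the real work, and the proof rests on their machinery. One minor bookkeeping point: a point of $\mathrm{spt}(T)$ that is near but not on $S$ is a genuine interior point, since its neighbourhood misses $\mathrm{spt}(\partial T)$, so no separate ``intermediate'' analysis near $S$ is required.
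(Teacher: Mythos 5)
The paper gives no proof of Theorem \ref{thm:reg}: it is simply quoted as Gr\"uter's free-boundary regularity theorem \cite[Theorem 4.7]{G} ``adapted to the context of an ambient Riemannian metric,'' with \cite{L,FL,SSY} cited for Riemannian adaptations of similar results. Your outline---interior points handled by the classical Euclidean interior theory after localization, boundary points by flattening $S$ in Fermi coordinates and invoking the stability of Gr\"uter's monotonicity, $\varepsilon$-regularity, blow-up and dimension-reduction arguments under small perturbations of the metric---is exactly the intended reduction and, like the paper, defers the substantive verification to the cited literature, so it matches the paper's treatment.
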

We will briefly explain how Theorem \ref{thm4} follows from Theorem \ref{thm:reg}.
Let $T$ be the volume minimizing representative of
$\bar\alpha$ from Theorem \ref{thm4}.
For a point $x\in\mathrm{spt}(T)$, set $\phi=\exp_x^{\bar g}$ and consider
\begin{equation*}
U=\phi^{-1}(B^{\bar g}_{r'}(x))\subset T_x M,\quad S=\phi^{-1}(\p M\cap B^{\bar g}_{r}(x)),
\end{equation*}
\begin{equation*}
T'=(\phi^{-1})_*T\in\mathcal{D}_n(U),\quad g=(\phi^{-1})_*\bar g,
\end{equation*}
where $0<r'<r\leq\mathrm{inj}(\bar g)$. By Theorem \ref{thm:reg}, the
singular set of $T'$ is empty and so 
there is a neighborhood $V$ of $0\in U$ such that $T'|_V$ is given by
an integer multiple of integration along a $C^1$-submanifold $M\subset V$.
Locally, $M$ can be written as the graph of a $C^1$-function which
weakly solves the minimal surface equation. Standard elliptic PDE methods imply
that $M$ is smooth, see, for instance the proof of Lemma \ref{lem:double} below.

\subsection{Doubling minimal hypersurfaces with free boundary}
\label{sec:doubling}
In this section we consider the reflection of a free boundary stable
minimal hypersurface over its boundary. To fix the setting, let
$(M,\bar g)$ be an $(n+1)$-dimensional compact oriented Riemannian
manifold with boundary $\p M$ and restriction metric $g=\bar g|_{\p
M}$.  Assume that there is a neighborhood of the boundary on which
$\bar g=g_{\p M}+dt^2$.  The {\emph{double}} of $(M,\bar g)$ is the
smooth closed manifold $M_\mathcal{D}$ given by
$M_\mathcal{D}=M\cup_{\p M}(-M).$ 
Notice that the double $M_\mathcal{D}$ comes
equipped with an involution $\iota:M_\mathcal{D}\to M_\mathcal{D}$
which interchanges the two copies of $M$ and fixes the doubling locus
$\p M\subset M_\mathcal{D}$. Since $\bar g$ splits as a product near
the boundary, one can also form the smooth doubling of $\bar g$,
denoted by $\bar g_\mathcal{D}$, by setting $\bar g_\mathcal{D}=\bar
g$ on $M$ and $\bar g_\mathcal{D}=\iota_*\bar g$ on $-M$.
\begin{lemma}\label{lem:double}
Let $(M,\bar g)$ be a compact oriented Riemannian manifold with boundary 
with $\bar g=g+dt^2$ near $\p M$.
If $\Sigma\subset M$ be a properly embedded minimal hypersurface with free boundary,
then double of $\Sigma$, given by $\Sigma_{\mathcal{D}}=
\Sigma\cup_{\p\Sigma}\iota(\Sigma)$ is a smooth 
minimal hypersurface of $(M_\mathcal{D},\bar g_\mathcal{D})$.
Moreover, if $\Sigma$ is stable, then so is $\Sigma_\mathcal{D}$.
\end{lemma}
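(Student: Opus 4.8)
The plan is to establish smoothness and minimality of $\Sigma_\mathcal{D}$ first, and then to deduce its stability from that of $\Sigma$ by splitting test functions according to the reflection involution $\iota$. For the first part I would work in a collar $\partial M\times(-\epsilon,0]$ on which $\bar g=g+dt^2$, so that $\partial M$ is totally geodesic and $A^{\partial M}\equiv 0$. Fixing $p\in\partial\Sigma$, the free-boundary condition (Lemma \ref{fact1}) forces $\nu^{\partial M}=\partial_t$ and $T(\partial\Sigma)$ to lie in $T\Sigma$ along $\partial\Sigma$, hence $\nu^\Sigma$ to be tangent to $\partial M$ there; so I may choose coordinates $(y,z)$ on $\partial M$ with $\partial\Sigma=\{z=0\}$ near $p$ and $\partial_z|_p$ normal to $\Sigma$, and in the coordinates $(y,z,t)$ the hypersurface $\Sigma$ is locally a graph $z=f(y,t)$ of a smooth function with $f(\cdot,0)\equiv 0$, while the condition $\partial_t\in T\Sigma$ along $\partial\Sigma$ translates into $\partial_t f(\cdot,0)\equiv 0$. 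Consequently, near the doubling locus $\Sigma_\mathcal{D}$ is the graph of the even reflection $F(y,t)=f(y,-|t|)$, and the vanishing of $\partial_t f$ on $\{t=0\}$ yields $F\in C^2$ (indeed $C^{2,1}$) by a direct computation of its derivatives across $\{t=0\}$. Thus $\Sigma_\mathcal{D}$ is a $C^2$ hypersurface of $(M_\mathcal{D},\bar g_\mathcal{D})$; its mean curvature is then a continuous function on $\Sigma_\mathcal{D}$ vanishing on the dense open set $\Sigma_\mathcal{D}\setminus\partial\Sigma$, where $\Sigma_\mathcal{D}$ coincides with $\Sigma$ or with its isometric image $\iota(\Sigma)$, both minimal; hence $\Sigma_\mathcal{D}$ is a $C^2$ minimal hypersurface, and standard Schauder bootstrapping for the quasilinear minimal surface equation (as in \cite[Chapter 6]{GT}) upgrades it to $C^\infty$.

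For stability, suppose in addition that $\Sigma$ is stable. Since $A^{\partial M}\equiv 0$ and $\nu^\Sigma$ is tangent to $\partial M$ along $\partial\Sigma$, every normal variation of $\Sigma$ is admissible, so Lemma \ref{lem:variation} identifies stability of $\Sigma$ with the inequality $\int_\Sigma(|\nabla\psi|^2-\psi^2 V)\,d\sigma\ge 0$ for all $\psi$, where $V=\Ric_{\bar g}(\nu^\Sigma,\nu^\Sigma)+|A^\Sigma|^2$. For the closed minimal hypersurface $\Sigma_\mathcal{D}$ the second-variation form is $Q(\phi)=\int_{\Sigma_\mathcal{D}}(|\nabla\phi|^2-\phi^2 V_\mathcal{D})\,d\sigma_\mathcal{D}$ with $V_\mathcal{D}=\Ric_{\bar g_\mathcal{D}}(\nu,\nu)+|A|^2$, and $V_\mathcal{D}$ is $\iota$-invariant because $\Ric(\nu,\nu)$ and $|A|^2$ do not depend on the choice of unit normal and $\iota$ is an isometry preserving $\Sigma_\mathcal{D}$. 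Decomposing an arbitrary $\phi$ into its $\iota$-even and $\iota$-odd parts $\phi=\phi^++\phi^-$, the cross term of $Q$ vanishes because $\phi^+\phi^-$ and $\langle\nabla\phi^+,\nabla\phi^-\rangle$ are $\iota$-odd functions and integrate to zero over the $\iota$-symmetric $\Sigma_\mathcal{D}$; hence $Q(\phi)=Q(\phi^+)+Q(\phi^-)$. When $\phi$ is $\iota$-even or $\iota$-odd the integrand of $Q$ is $\iota$-invariant, so $Q(\phi)=2\int_\Sigma(|\nabla\psi|^2-\psi^2 V)\,d\sigma$ with $\psi=\phi|_\Sigma$, which is $\ge 0$ by stability of $\Sigma$. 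Therefore $Q\ge 0$ and $\Sigma_\mathcal{D}$ is stable.

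I expect the main obstacle to be the boundary regularity: one must verify that the orthogonality condition really forces enough vanishing of $\partial_t f$ at $\partial\Sigma$ for the even reflection to be genuinely $C^2$ (so that the ``mean curvature by continuity'' step and the ensuing elliptic bootstrap are legitimate), rather than merely Lipschitz. An alternative route that sidesteps the reflection estimate is to show directly that $\Sigma_\mathcal{D}$ is a stationary integral varifold in $(M_\mathcal{D},\bar g_\mathcal{D})$ by symmetrizing variation vector fields and pushing them down to admissible free-boundary variations of $\Sigma$ (using $\iota$-invariance of $\bar g_\mathcal{D}$), and then invoking interior regularity for stationary varifolds; the stability argument above is unaffected.
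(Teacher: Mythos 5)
Your proof is correct, and the stability half is essentially identical to the paper's (split $\phi$ into $\iota$-even and $\iota$-odd parts, observe the cross terms are odd and integrate to zero, and reduce each piece to the free-boundary stability inequality on $\Sigma$, whose boundary term vanishes because $A^{\p M}\equiv 0$ in the product collar). Where you genuinely diverge is the regularity across the doubling locus. The paper only extracts the $C^1$ matching from $\p_t f=0$ on $\p\Sigma$, observes that the even reflection is a \emph{weak} solution of the minimal graph equation on the full ball, and then has to pass through De Giorgi--Nash (applied to difference quotients of the divergence-form equation) before Schauder bootstrapping. You instead exploit the full strength of the Neumann condition $\p_t f(\cdot,0)\equiv 0$ together with smoothness of $f$ up to $\p\Sigma$ to check directly that the even reflection is $C^2$ (the mixed partials $\p_y\p_t F$ vanish on $\{t=0\}$ precisely because $\p_t f(\cdot,0)\equiv 0$, and $\p_t^2F$ is even), so the doubled graph is a $C^2$ hypersurface whose continuous mean curvature vanishes on the dense complement of the interface and hence everywhere; after that only classical Schauder theory for the quasilinear equation (a $C^2$ solution with $C^\alpha$ coefficients is $C^{2,\alpha}$, e.g. \cite[Theorem 6.17]{GT}, then iterate) is needed. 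This buys a more elementary argument with no measurable-coefficient machinery, at the cost of the explicit derivative computation for the reflection; the paper's route is more robust (it would survive with less boundary regularity) but heavier. Two small cautions: cite the classical-to-H\"older regularity statement rather than the a priori estimate of Theorem \ref{thm:schauder}, since the latter presumes $C^{2,\alpha}$; and your alternative varifold remark should not appeal to ``interior regularity for stationary varifolds'' in general, which fails without a density/multiplicity-one hypothesis --- here it is rescued only because $\Sigma_\mathcal{D}$ is already a $C^1$ graph near the interface, so Allard's theorem applies.
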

\begin{proof}
First, we will show that $\Sigma_\mathcal{D}$ is a smooth hypersurface.
Clearly, $\Sigma_\mathcal{D}$ is smooth away from the doubling locus 
$\p\Sigma\subset M_\mathcal{D}$.
Let $x_0\in\p\Sigma$ and let $r>0$ be less than the injectivity radius
of $\bar g_{\mathcal{D}}$.  
Set $\phi=\exp_{x_0}^{\bar g_\mathcal{D}}$ and consider 
\[
\hat\Sigma=\phi^{-1}
	(\Sigma\cap B_{r}(x_0)),\quad
\hat\Sigma_{\mathcal{D}}=\phi^{-1}
	(\Sigma_{\mathcal{D}}\cap B_{r}(x_0)),
\quad \hat g=\phi^*\bar g_\mathcal{D}
\]
and $\nu$, the unit normal vector field to $\hat\Sigma$ with respect to $\hat g$.
Evidently, $\hat\Sigma$ is a minimal hypersurface in $T_{x_0}M_\mathcal{D}$
with free boundary contained in $T_{x_0}\p M\subset T_{x_0}M_\mathcal{D}$
with respect to $\hat g$.
We choose an orthonormal basis for $T_{x_0}M_\mathcal{D}$
so that, writing $x\in T_{x_0}M$ as $(x^1,\ldots,x^{n+1})$ in this basis,
\begin{enumerate}
\item $T_{x_0}\p \hat\Sigma=\{(x^1,\ldots,x^{n-1},0,0)\}$;
\item $T_{x_0}\hat\Sigma=\{(x^1,\ldots,x^n,0)\}$;
\item $T_{x_0}\p M=\{(x^1,\ldots,x^{n-1},0,x^{n+1})\}$.
\end{enumerate}
This can be accomplished since $\Sigma$ meets $\p M$ orthogonally.
In these coordinates, the involution $\iota$
now takes the form $(x^1,\ldots,x^n,x^{n+1})\mapsto (x^1,\ldots,-x^n,x^{n+1})$.
Notice that, because the second fundamental form of $\p M$ vanishes, 
$\phi^{-1}(\p M\cap B_{r}(x_0))$
is contained in the hyperplane $\{(x^1,\ldots,x^{n+1})\colon x^n=0\}$.

For a radius $r^\prime<r$, we consider the $n$-dimensional ball
\begin{equation*}
B_{r^\prime}^n(0)=\{x\in T_{x_0}M\colon x^{n+1}=0,||x||< r^\prime\},
\end{equation*}
the $n$-dimensional half-ball 
$B_{r^\prime,+}^n(0)=\{x\in B_{r^\prime}^n(0)\colon x^n\geq0\},$
and the cylinder
\begin{equation*}
C_{r^\prime}(0)=\{x\in T_{x_0}M\colon (x^1,\ldots,x^n,0)\in B_{r^\prime}^n(0)\}.
\end{equation*}
For small enough $r^\prime$, we may write $\hat \Sigma\cap C_{r^\prime}(0)$ as the graph
of a function
\[
u:B_{r^\prime,+}^n(0)\to\mathbb{R},\quad \mathrm{graph}(u)=\hat\Sigma\cap C_{r^\prime}(0)
\]
where $\mathrm{graph}(u)=\{(x^1,\ldots,x^n,u(x^1,\ldots,x^n))\colon (x^1,\ldots,x^n,0)\in B_{r^\prime}^n(0)\}$.
Now we may form the doubling of $u$ to a function 
$u_\mathcal{D}:B_{r^\prime}^n(0)\to\mathbb{R}$, setting
\[
u_\mathcal{D}(x^1,\ldots,x^n)=\begin{cases}u(x^1,\ldots,x^n)&\text{ if }x^n\geq0\\
u(x^1,\ldots,x^{n-1},-x^n)&\text{ if }x^n<0.
\end{cases}
\]
To show
$\Sigma_\mathcal{D}$ is smooth at $x_0$, it suffices
to show that $u_\mathcal{D}$ is smooth along $\{x\in B_{r^\prime}^n(0)\colon x^n=0\}$.

From the free boundary condition, we have 
$\frac{\p u}{\p{x^n}}\equiv0$ on $\{x^n=0\}$ and so $u_\mathcal{D}$
has a continuous derivative on all of $B_{r^\prime}^n(0)$.
Since $\hat\Sigma$ is smooth and minimal, 
$u_\mathcal{D}$ is smooth and solves the 
minimal graph equation (\ref{eq:mean}) with respect to the metric $\hat g_\mathcal{D}$
in the strong sense on $\{x\in B_{r^\prime}^n(0)\colon x^n\neq0\}$. Moreover,
it follows from $\frac{\p u}{\p{x^n}}\equiv0$ on $\{x^n=0\}$
and the $\iota$-invariance of $\bar g_\mathcal{D}$ that
$u_\mathcal{D}$ solves the minimal graph equation weakly on the entire ball
$B^n_{r'}(0)$.

From this point, the smoothness of $u_\mathcal{D}$ is a standard application 
of tools from nonlinear elliptic PDE theory, so we will be brief 
(see \cite[Lemma 7.2]{CM}). 
Standard estimates for minimizers implies $u_\mathcal{D}\in H^2(B^n_{r^\prime}(0))$ 
(see \cite[Section 8.3.1]{Ev}).
Writing the equation (\ref{eq:mean}) in divergence form, we have 
\begin{equation}\label{eq:divmean}
\frac{\p}{\p x^i}\left(a^{ij}\frac{\p u_\mathcal{D}}{\p x^j}+b^i
u_\mathcal{D}\right)=0
\end{equation}
where the coefficients $a^{ij}$ and $b^i$ depend on $u_\mathcal{D}$ and are
only differentiable. Since $u_\mathcal{D}$ weakly solves
equation (\ref{eq:divmean}),
\begin{equation*}
\int_{B^n_{r^\prime}(0)}\left(a^{ij}\frac{\p u_\mathcal{D}}{\p x^j}+b^i u_\mathcal{D}\right)
\frac{\p\psi}{\p x^i} dx=0,
\end{equation*}
for any test function $\psi\in C_0^\infty(B^n_{r^\prime}(0))$
Taking $\psi$ to be of the form $-\frac{\p w}{\p x^k}$ for some function $w$
and integrating by parts, one finds $\frac{\p u_\mathcal{D}}{\p
x^k}$ is a weak solution of a uniformly elliptic linear equation with
$L^\infty$ coefficients for each $k=1,\ldots,n$.

Now we may apply the DeGiorgi-Nash theorem (see \cite[Theorem
8.24]{GT}) to conclude that, for each $r^{\prime\prime}<r^\prime$ there is an
$\alpha\in(0,1)$ such that $\frac{\p u_\mathcal{D}}{\p x^k}\in
C^{0,\alpha}(B^n_{r^{\prime\prime}}(0))$ for each $k=1,\ldots,n$. Now $u^D\in
C^{1,\alpha}(B^n_{r^{\prime\prime}}(0))$ and the functions $\frac{\p
u_\mathcal{D}}{\p x^k}$ solve a uniformly elliptic linear equation
with H\"older coefficients.  The Schauder estimates from Theorem
\ref{thm:schauder} allow us to conclude that $\frac{\p u_\mathcal{D}}{\p
x^k}\in C^{2,\alpha}(B_{r^\prime}(0))$.  This argument may be iterated,
see \cite[Section 8]{GT}, to conclude $u_\mathcal{D}\in
C^{k,\alpha}(B^n_{r^{\prime\prime}}(0))$ for any $k$. This finishes the proof that
$u_\mathcal{D}$ is a smooth solution to the mean curvature equation
across the doubling locus $\{x^n=0\}$ and hence $\Sigma_\mathcal{D}$
is a smooth minimal hypersurface.

The last step is to show that $\Sigma_\mathcal{D}$ is stable. Let 
$\phi\in C^\infty(\Sigma_\mathcal{D})$ define a normal variation
and write $\phi=\phi_0+\phi_1$ where $\phi_0$ is invariant under the 
involution and $\phi_1$ is anti-invariant under the involution. Now we 
will consider the second variation of the volume
of $\Sigma_\mathcal{D}$ with respect to $\phi$.
\begin{align}\label{eq:2var}
  \delta_\phi^2(\Sigma_\mathcal{D})&=
  \int_{\Sigma_\mathcal{D}}|\nabla\phi|^2-\phi^2(\Ric(\nu,\nu)+|A|^2)d\mu\notag
  \\ {}&=\int_{\Sigma_\mathcal{D}}|\nabla\phi_0|^2+2g(\nabla\phi_0,\nabla\phi_1)+|\nabla\phi_1|^2-
(\phi_0^2+2\phi_0\phi_1+\phi_1^2)(\Ric(\nu,\nu)+|A|^2)d\mu\notag\\ {}&=\delta_{\phi_0}^2(\Sigma_\mathcal{D})+\delta_{\phi_1}^2(\Sigma_\mathcal{D})+
  \int_{\Sigma_\mathcal{D}}2g(\nabla\phi_0,\nabla\phi_1)-2\phi_0\phi_1(\Ric(\nu,\nu)+|A|^2)d\mu\notag\\ {}&=2\delta_{\phi_0|_\Sigma}^2(\Sigma)+2\delta_{\phi_1|_\Sigma}^2(\Sigma)
\geq0 \notag
\end{align}
where the last equality follows from the fact that
$g(\nabla\phi_0,\nabla\phi_1)$ and $\phi_0\phi_1$ are
anti-invariant under the involution. This completes the proof of
Lemma \ref{lem:double}. 
\end{proof}
\subsection{Second fundamental form bounds}\label{section:SFF}
In this section, we will prove Step 2 in Section \ref{section:claim3}.
Let $(M_i,\bar g_i)$ and $W_i$ be as in Main Lemma.
The uniform second fundamental form bounds for the stable minimal hypersurfaces
$W_i\subset M_i$ can be reduced to a classical estimate due to Schoen-Simon
\cite{SS} for stable minimal hypersurfaces in Riemannian manifolds. 
In the following, $(M,\bar g)$ is a complete $(n+1)$-dimensional Riemannian manifold,
$x_0\in M$, $\rho_0\in(0,\mathrm{inj}_{\bar g}(x_0))$, and $\mu_1$ is a constant
satisfying 
\begin{equation}\label{eq:boundedgeometry}
\begin{array}{c}
\sup_{B_{\rho}(0)}\left|\frac{\partial \bar g_{ij}}{\partial x^k}\right|\leq\mu_1,\quad 
\sup_{B_{\rho}(0)}\left|\frac{\partial^2 \bar g_{ij}}{\partial x^k\partial x^l}\right|\leq\mu_1^2,
\end{array}
\end{equation}
on the metric ball $B_{\rho_0}(x_0)$ 
in geodesic normal coordinates
$(x^1,\ldots,x^{n+1})$ centered at $x_0$.
\begin{theorem}[Corollary 1 \cite{SS}]\label{theorem:SS}
Suppose $\Sigma$ is an oriented embedded $C^2$-hypersurface in an
$(n+1)$-dimensional Riemannian manifold $(M,\bar g)$ with
$x_0\in\overline{\Sigma}$, $\mu_1$ satisfies {\rm
(\ref{eq:boundedgeometry})}, and $\mu$ satisfies the bound
$\rho_0^{-n}\mathcal{H}^n(\Sigma\cap B_{\rho_0}(x_0))\leq\mu$.  Assume
that $\mathcal{H}^n(\Sigma\cap B_{\rho_0}(x_0))<\infty$ and
$\mathcal{H}^{n-2}(\mathrm{sing}(\Sigma)\cap B_{\rho_0}(x_0))=0$.  If
$n\leq6$ and $\Sigma$ is stable in $B_{\rho_0}(x_0)$, then
\[
\sup_{B_{\rho_0}(x_0)}|A^\Sigma|\leq\frac{C}{\rho_0},
\]
where $C$ depends only on $n$, $\mu$, and $\mu_1\rho_0$.
\end{theorem}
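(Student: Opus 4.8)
This statement is, up to notation, the interior curvature estimate of Schoen and Simon \cite[Corollary 1]{SS}, so in the paper its justification is short: one quotes \cite{SS}, noting only that the Riemannian-ambient formulation used here (with geometry controlled by $\mu_1$) is already the form treated there, with parallel adaptations in \cite{L,FL}. Nonetheless, here is the structure of the argument and where the hypotheses are used. After rescaling so that $\rho_0=1$ and passing to geodesic normal coordinates about $x_0$, the bound (\ref{eq:boundedgeometry}) gives $|\bar g_{ij}-\delta_{ij}|+|\partial \bar g_{ij}|+|\partial^2 \bar g_{ij}|\leq c(\mu_1\rho_0)$ on $B_{\rho_0}(x_0)$, so every constant produced below depends only on $n$, $\mu$, and $\mu_1\rho_0$, as claimed.

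The first ingredient is the Simons identity: writing $u=|A^\Sigma|$, minimality of $\Sigma$ gives, on its regular set,
\[
u\,\Delta_\Sigma u \;\geq\; |\nabla A|^2-|\nabla u|^2-u^4-c_1 u^2-c_2 u,
\]
where the lower-order terms $c_1 u^2$, $c_2 u$ arise from $\Ric_{\bar g}$ and its covariant derivative and are exactly what (\ref{eq:boundedgeometry}) bounds, together with the refined Kato inequality $|\nabla A|^2\geq(1+\tfrac{2}{n})|\nabla u|^2$. The second ingredient is the stability inequality $\int_\Sigma |A|^2\varphi^2\leq\int_\Sigma |\nabla\varphi|^2+\int_\Sigma \Ric_{\bar g}(\nu,\nu)\varphi^2$. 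The plan is to test the Simons inequality against cutoffs of the form $\varphi^2 u^{1+2q}$, integrate by parts, and feed the result into the stability inequality --- the Schoen--Simon(--Yau) iteration --- obtaining a local integral curvature bound $\int_{B_{1/2}(x_0)\cap\Sigma}|A|^{2+2q}\leq C$ for some $q>0$; the delicate point is that this bound is strong enough to continue only when $n\leq6$, which is where that hypothesis enters. Throughout, the hypothesis $\mathcal{H}^{n-2}(\mathrm{sing}(\Sigma)\cap B_{\rho_0}(x_0))=0$ is what permits these integrations by parts over all of $\Sigma$: one inserts a cutoff vanishing on a neighbourhood of $\mathrm{sing}(\Sigma)$ and passes to the limit, the correction term dropping out by the codimension bound.

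The final step is to upgrade from an integral bound to the pointwise bound. Using the density hypothesis $\rho_0^{-n}\mathcal{H}^n(\Sigma\cap B_{\rho_0}(x_0))\leq\mu$ and the monotonicity formula, one first sees that the scale-invariant $L^n$-norm of $|A^\Sigma|$ over small balls is small, and then a De Giorgi--Nash--Moser iteration applied to the subsolution inequality for $|A^\Sigma|^2$ above yields $\sup|A^\Sigma|\leq C/\rho_0$, first on $B_{\rho_0/2}(x_0)$ and then on $B_{\rho_0}(x_0)$ after covering and rescaling. The step I expect to be the real obstacle is the middle one: making the Schoen--Simon(--Yau) iteration go through in the borderline dimension $n=6$ while honestly absorbing the error terms created by the non-flat metric $\bar g$ into the constant $\mu_1\rho_0$ (this forces the working radius to be small in terms of $\mu_1$). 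In the body of the paper the estimate is applied to obtain the uniform bound of Step \ref{step:SFF}: the collared manifolds $M_i$ have injectivity radius, metric derivatives, and --- by Step \ref{step:volume} --- local volume of $W_i$ all bounded independently of $i$ in terms of the fixed geometry of $M$; doubling via Lemma \ref{lem:double} converts the free-boundary points of $W_i$ into interior points of a closed stable minimal hypersurface; and the estimate then applies with constants uniform in $i$.
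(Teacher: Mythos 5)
Your core move is the same as the paper's: Theorem \ref{theorem:SS} is not proved in the paper at all --- it is quoted as Corollary 1 of \cite{SS}, translated into the Riemannian-ambient phrasing with geometry controlled by (\ref{eq:boundedgeometry}) --- and your description of how it feeds Step \ref{step:SFF} (uniform bounds on injectivity radius and metric derivatives for the collared manifolds, the volume bound of Step \ref{step:volume}, and the doubling Lemma \ref{lem:double} to turn free-boundary points into interior points) matches the paper's application exactly. One caution about your internal sketch, however: the argument you outline --- testing Simons' inequality against $\varphi^2u^{1+2q}$, inserting into the stability inequality, and then running a Moser iteration --- is the Schoen--Simon--Yau scheme of \cite{SSY}, and it is known to fail precisely at the borderline hypersurface dimension $n=6$: that iteration produces $L^p$ bounds on $|A|$ only for $p$ strictly below roughly $4+\sqrt{8/n}$, while upgrading to a sup bound needs $p>n$, and these ranges are compatible for $n\leq5$ but empty at $n=6$. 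This is not an error term one can ``honestly absorb'' by shrinking the working radius; it is the reason the 1981 paper \cite{SS} exists. The actual proof of Corollary 1 there is of a different kind --- a compactness/blow-up argument resting on their regularity (sheeting) theory for stable hypersurfaces whose singular set is $\mathcal{H}^{n-2}$-null, which is also where that hypothesis genuinely enters, rather than merely licensing integrations by parts across the singular set. Since both you and the paper use the estimate as a black box, none of this affects the application, but the sketch should not be read as a viable route to the quoted statement in the case $n=6$.
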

\begin{proof}[Proof of Step 2]
By Lemma \ref{lem:double}, the doubling $(W_i)_\mathcal{D}$ is a
smooth stable minimal hypersurface of $(M_i)_\mathcal{D}$. In
particular, the singular set of $(W_i)_\mathcal{D}$ is
empty. Moreover, the manifolds $(M_i)_\mathcal{D}$ have uniformly
bounded geometry so that the injectivity radius is uniformly bounded
from below by some $\rho_0>0$, and there is a constant $\mu_1$ so that
the bounds (\ref{eq:boundedgeometry}) hold in normal coordinates about
any $x\in (M_i)_\mathcal{D}$, any $\rho\in(0,\rho_0)$, and all
$i=1,2.\ldots$. According to Step \ref{step:volume}, there is a
constant $\mu$ such that
\[
\rho_0^{-n}\vol(W_i\cap B_\rho(x))\leq \mu
\]
for all $i=1,2,\ldots$.
Hence, we may uniformly apply Theorem \ref{theorem:SS} on any ball 
$B_{\rho_0}(x_0)\subset (M_i)_\mathcal{D}$ intersecting $W_i$
to obtain the bound in Step 2.
\end{proof}
\subsection{Generic regularity in dimension 8}\label{sec:dim8}
It is well known that codimension one volume minimizing currents, in
general, have singularities if the ambient space is of dimension 8 or
larger. However, in \cite{Smale93} N. Smale developed a method for
removing these singularities in $8$-dimensional Riemannian manifolds
by making arbitrarily small conformal changes.  In this section, we
will describe the modifications necessary to adapt his method to the
case of Theorem \ref{thm5} with $n=7$.

First, we will describe the perturbation result we will use. 
Let $M$ be a compact $(n+1)$-dimensional manifold. For $k=3,4,\ldots$, let 
$\mathcal{M}^k_0$ denote the class of $C^k$ metrics on $M$ which split isometrically as a 
product on some neighborhood of $\p M$. 
Fix a relative homology class $\alpha\in H_n(M,\p M;\mathbb{Z})$.
We will show the following.
\begin{theorem}\label{thm:dim8}
Let $g_0\in\mathcal{M}^k_0$ and $n=7$. For $\epsilon>0$, there exists a metric 
$g\in\mathcal{M}^k_0$ and a $g_0$-volume minimizing current $T$ representing $\alpha$
such that $||g-g_0||_{C^k}<\epsilon$ and $\mathrm{spt}(T)$ is smooth.
\end{theorem}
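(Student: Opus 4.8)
The plan is to follow N.~Smale's perturbation scheme from \cite{Smale93}, adapting it to the free boundary setting while making sure that every perturbation keeps the metric in $\mathcal{M}^k_0$. First I would fix a $g_0$-volume minimizing integer multiplicity rectifiable current $T_0$ representing $\alpha$, which exists by Lemma \ref{lem:mincurrent}. Since $n=7$, Theorem \ref{thm:reg} gives that $\mathrm{sing}(T_0)$ is discrete, and as $M$ is compact this forces $\mathrm{sing}(T_0)=\{p_1,\dots,p_m\}$ to be finite. I would then split this set into the points lying at positive distance from $\partial M$ (``interior'' singularities) and the points lying on $\partial M$ (``boundary'' singularities), since the local model and the admissible perturbations differ in the two cases.

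For an interior singular point $p_i$, Smale's argument applies essentially verbatim. A tangent cone $C_i$ to $T_0$ at $p_i$ is an area-minimizing hypercone in $\mathbb{R}^8$ with an isolated singularity, so the Hardt--Simon foliation (see \cite{Smale93} and the references therein) supplies smooth area-minimizing hypersurfaces asymptotic to $C_i$ on each of its two sides. Choosing a ball $B_{\rho_i}(p_i)$ disjoint from the product collar of $\partial M$ and a cutoff $\psi_i$ supported in it, one shows that for a suitable small $t>0$ the metric $e^{2t\psi_i}g_0$ admits a volume minimizing current in $\alpha$ which, inside $B_{\rho_i/2}(p_i)$, is a small graphical perturbation of a Hardt--Simon leaf, hence smooth there. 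Since $\psi_i$ is supported in the interior, the perturbed metric is still a product near $\partial M$, i.e.\ it stays in $\mathcal{M}^k_0$. I would carry out these perturbations at all interior $p_i$ simultaneously in pairwise disjoint balls.

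For a boundary singular point $p_i\in\partial M$ I would pass to the double. Using Lemma \ref{lem:double} together with an elementary averaging argument---any competitor for $(T_0)_{\mathcal D}$ can be symmetrized without increasing its mass, by convexity of the mass norm, and symmetric competitors restrict to free boundary competitors for $T_0$---the double $(T_0)_{\mathcal D}$ is volume minimizing in its homology class in the closed $8$-manifold $(M_{\mathcal D},(g_0)_{\mathcal D})$, with an isolated singularity at $p_i$ on the doubling locus whose tangent cone is invariant under the reflection $\iota$. Restricting the Hardt--Simon foliation of that cone to the half-space recovers smooth free boundary minimal hypersurfaces asymptotic to the tangent cone of $T_0$ at $p_i$ and meeting $\partial M$ orthogonally. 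One then runs Smale's perturbation $\iota$-equivariantly, taking the conformal factor to come from a bump on the boundary metric $g_0|_{\partial M}$ near $p_i$ and extending it as a product into the collar; this keeps the restricted metric in $\mathcal{M}^k_0$ and, as before, pushes the minimizer onto a smooth leaf near $p_i$.

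Finally I would choose all cutoffs and parameters $t$ small enough that the cumulative perturbation $g$ satisfies $\|g-g_0\|_{C^k}<\epsilon$, and invoke smooth convergence of volume minimizers away from the finitely many singular points---via the interior and free boundary curvature estimates, cf.\ \cite{SS}, \cite{G}---to rule out the creation of new singularities, so that the volume minimizing current $T$ for $g$ representing $\alpha$ has smooth support. The main obstacle is the boundary case: near $\partial M$ a genuine conformal change of the ambient metric is not allowed, as it would destroy the product structure, so one must verify that perturbations coming only from conformal changes of the boundary metric retain enough freedom to push the free boundary minimizer onto a Hardt--Simon leaf. I expect this to reduce, through the reflection symmetry of the double, to Smale's closed-manifold argument carried out $\iota$-equivariantly, with the symmetric Hardt--Simon leaf playing the role of the barrier; the equivariance and the fact that the conformal factor must be independent of the collar coordinate are the points needing genuine care.
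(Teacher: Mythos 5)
Your overall plan (Smale-type conformal perturbations plus doubling for the free boundary) points in the right direction, but as written it has two genuine gaps, and in both places the paper's actual argument is structured differently. First, you never arrange for $T_0$ to be the \emph{unique} $g_0$-minimizer of $\alpha$. This is the content of Lemma \ref{lem:unique}, obtained by a preliminary small conformal deflation near one interior regular point of each component, and it is indispensable: without uniqueness, minimizers for the perturbed metrics need not converge to $T_0$ as the perturbation parameter tends to $0$, so there is no reason the new minimizer is anywhere near $p_i$, and your concluding step (``smooth convergence away from the finitely many singular points rules out new singularities'') has no fixed limit object to converge to. Second, even granting closeness to $T_0$, your claim that the perturbed minimizer is ``a small graphical perturbation of a Hardt--Simon leaf'' near each singular point is precisely the hard step, and the symmetric bump $e^{2t\psi_i}$ supported near the \emph{singular} point does not deliver it: the Hardt--Simon perturbation theorem \cite[Theorem 5.6]{HS} requires the nearby minimizer to lie on \emph{one side} of the singular hypersurface, and a one-signed conformal factor centered at $p_i$ gives no one-sidedness. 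Smale's mechanism, which Lemma \ref{lem:dim8} follows, is global rather than local-at-the-singularity: the factor $u_{\bar\epsilon}=1-\bar\epsilon^{s}\phi(t)\eta(x)$ is \emph{odd} in the signed distance $t$ at a chosen interior \emph{regular} point, so it slides the minimizer off itself to one side; Lemma \ref{lem:dim8reg} plus the Simon strict maximum principle \cite{Si87} then give one-sidedness outside the support of the perturbation, and only then does \cite[Theorem 5.6]{HS} yield smoothness for large $i$, producing the contradiction.

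Your treatment of boundary singularities is also both problematic and unnecessary. A conformal factor chosen independent of the collar coordinate turns $g+dt^2$ into $u^{2/n}g+u^{2/n}dt^2$, a warped product that no longer splits isometrically near $\p M$, so such a metric leaves $\mathcal{M}^k_0$; and replacing this by a change of the boundary metric alone, extended as a product, is no longer a conformal change of the ambient metric, so Smale's volume-comparison computations do not apply to it. The paper sidesteps all of this: in both Lemma \ref{lem:unique} and Lemma \ref{lem:dim8} the perturbations are supported on balls about \emph{interior} regular points of the minimizer, so the product structure near $\p M$ is untouched and membership in $\mathcal{M}^k_0$ is automatic. The boundary (including any singular points of $T$ lying on $\p M$) is handled purely on the analysis side: one doubles the manifold, metrics, and currents as in Section \ref{sec:doubling} (Lemma \ref{lem:double} supplying the regularity and minimality of the doubles), and applies the maximum principle and the Hardt--Simon theorem on the closed double $M_{\mathcal D}$ to the $\iota$-invariant currents directly, with no equivariant versions of these theorems and no separate interior/boundary case analysis of $\mathrm{sing}(T)$ needed. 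If you reorganize your argument as: (i) uniqueness via Lemma \ref{lem:unique}; (ii) one-sided sliding at an interior regular point; (iii) contradiction via compactness, the maximum principle, and \cite{HS} on the double, you recover the paper's proof.
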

The proof of Theorem \ref{thm:dim8} follows by showing the
constructions in \cite{Smale93} can be performed on the doubled
manifold $M_{\mathcal{D}}$ (see Appendix \ref{sec:doubling}) in an
involution-invariant manner.  We proceed in two lemmas. The first
lemma holds in any dimension.
\begin{lemma}\label{lem:unique}
Let $g_0\in\mathcal{M}^k_0$ and suppose $T$ is a homologicly $g_0$-volume minimizing 
current representing $\alpha$. For $\epsilon>0$, there
is a metric $g\in\mathcal{M}^k_0$ such that $||g-g_0||_{C^k}<\epsilon$
and $T$ is the only $g$-volume minimizing current representative of $\alpha$.
\end{lemma}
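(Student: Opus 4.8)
The plan is to perturb $g_0$ by an arbitrarily small conformal factor supported away from $\partial M$ so that the volume functional, restricted to the (finitely many) homologically $g_0$-minimizing currents, is strictly minimized at $T$. First I would invoke the basic compactness theory of integral currents (as recalled in Section \ref{sec:thm4}, via \cite[Section 5.1.6]{F}): the set $\mathcal{T}_0$ of $g_0$-volume minimizing integer multiplicity rectifiable currents representing $\alpha$ is nonempty, and by the monotonicity/compactness argument it is \emph{compact} in the flat topology, with uniformly bounded mass. A standard observation — used in \cite[Lemma 1.3]{Smale93} and \cite{Smale93} — is that $\mathcal{T}_0$ is in fact finite, or at least that we may reduce to a finite subcollection whose supports all pass through a common ball in the interior of $M$ where $T$ itself is regular. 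The key point is that any $S \in \mathcal{T}_0$ with $S \neq T$ has support not entirely contained in $\mathrm{spt}(T)$, so we can separate $T$ from such $S$ by a suitable interior bump.

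The main construction proceeds as follows. Choose a point $x_0 \in \mathrm{reg}(T)$ with $x_0$ in the interior of $M$; such a point exists since $\mathrm{spt}(T)\not\subset\partial M$ (its boundary represents $\partial\alpha$ but the current has interior mass). Pick a small geodesic ball $B = B_\rho(x_0)$ with $\overline{B}$ disjoint from $\partial M$ and from $\mathrm{sing}(T)$, chosen small enough that $\overline{B}$ meets no other member $S$ of the (reduced, finite) collection $\mathcal{T}_0\setminus\{T\}$ that we wish to exclude; if some $S$ does meet every such ball near $\mathrm{reg}(T)$, then $\mathrm{spt}(S)\cap B \neq \mathrm{spt}(T)\cap B$ and we instead choose $B$ where the two regular sheets are disjoint. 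Let $\psi \in C^\infty_c(B)$ be a nonnegative bump function with $\psi > 0$ on a neighborhood of $\mathrm{spt}(T)\cap B$, and for small $s>0$ set
\begin{equation*}
g_s = \left(1 + s\,\psi\right)^{2/n} g_0.
\end{equation*}
Since $\psi$ is supported in the interior, $g_s \in \mathcal{M}^k_0$, and $\|g_s - g_0\|_{C^k} < \epsilon$ for $s$ small. The $n$-dimensional volume of a current $S$ with respect to $g_s$ is $\mathbf{M}_{g_s}(S) = \int (1+s\psi)\,d\|S\|_{g_0}$, so
\begin{equation*}
\mathbf{M}_{g_s}(S) = \mathbf{M}_{g_0}(S) + s\!\int \psi \, d\|S\|_{g_0}.
\end{equation*}
For every $S \in \mathcal{T}_0$ we have $\mathbf{M}_{g_0}(S) = \mathbf{M}_{g_0}(T)$, while $\int \psi\, d\|T\|_{g_0} > 0$ strictly (since $\psi>0$ on a piece of $\mathrm{reg}(T)$), and $\int\psi\,d\|S\|_{g_0} > \int\psi\,d\|T\|_{g_0}$ would need to fail — the point is to arrange $\int\psi\,d\|T\|_{g_0}$ to be \emph{strictly smaller} than $\int\psi\,d\|S\|_{g_0}$ for the competitors $S$; this is where the choice of $B$ matters, and one takes $\psi$ vanishing on $\mathrm{spt}(S)\cap B$ while positive on $\mathrm{spt}(T)\cap B$. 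Then $\mathbf{M}_{g_s}(T) < \mathbf{M}_{g_s}(S)$ for all $S \in \mathcal{T}_0\setminus\{T\}$ and all small $s>0$.

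It remains to rule out \emph{new} minimizers appearing for $g_s$. Suppose $s_j \to 0$ and $T_j$ is a $g_{s_j}$-minimizing representative of $\alpha$ with $T_j \neq T$. By the uniform mass bound and compactness, a subsequence converges in the flat topology to some current $T_\infty$; a standard lower-semicontinuity and minimality argument (the limit of minimizers for converging metrics is a minimizer for the limit metric — again \cite{F}, and as used in \cite{Smale93}) shows $T_\infty \in \mathcal{T}_0$. If $T_\infty = T$, then $T_j \to T$, and since $T$ is regular in $B$ the convergence is smooth there by the curvature estimates of Theorem \ref{theorem:SS} and Allard-type regularity, forcing $\int \psi\, d\|T_j\|_{g_{s_j}} \to \int\psi\,d\|T\|_{g_0}$; combined with the strict inequality above this contradicts $\mathbf{M}_{g_{s_j}}(T_j) \le \mathbf{M}_{g_{s_j}}(T)$ for large $j$. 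If $T_\infty \neq T$, then $T_\infty$ is one of the competitors we excluded, and the same strict-inequality comparison gives $\mathbf{M}_{g_{s_j}}(T) < \mathbf{M}_{g_{s_j}}(T_j)$ for large $j$, again a contradiction. Hence for some small $s>0$ the metric $g = g_s$ has $T$ as its unique volume minimizing representative of $\alpha$, and $\|g - g_0\|_{C^k}<\epsilon$.

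The main obstacle I anticipate is the finiteness (or effective reduction to finitely many) members of $\mathcal{T}_0$ and, relatedly, verifying that a single interior bump $\psi$ can simultaneously \emph{strictly} penalize every competitor while the new-minimizer exclusion argument closes up; handling the possibility of infinitely many $g_0$-minimizers requires the compactness of $\mathcal{T}_0$ together with the smooth convergence coming from Theorem \ref{theorem:SS}, essentially reproducing the argument of \cite[Lemma 1.3]{Smale93} in the present free-boundary-doubled setting.
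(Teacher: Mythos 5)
There is a genuine gap, and it starts with the direction of your perturbation. You take $g_s=(1+s\psi)^{2/n}g_0$ with $\psi\ge 0$ and $\psi>0$ on a neighborhood of $\mathrm{spt}(T)\cap B$, so $\mathbf{M}_{g_s}(S)=\mathbf{M}_{g_0}(S)+s\int\psi\,d\|S\|_{g_0}$; since all $S\in\mathcal{T}_0$ have the same $g_0$-mass and your $\psi$ is positive on $T$ but vanishes on the competitor, this gives $\mathbf{M}_{g_s}(T)>\mathbf{M}_{g_s}(S)$, the exact opposite of the inequality you assert. In other words, your bump penalizes $T$, not the competitors. To salvage the idea you would have to place the bump on \emph{every} competitor and keep it off $T$, and that is precisely where the approach breaks down: the set of $g_0$-minimizers in a homology class need not be finite or separated from $\mathrm{spt}(T)$ (it can be a continuum accumulating on $T$, as for the family of parallel minimizing subtori in a flat torus), so no single ball $B$ and no single $\psi$ can strictly separate $T$ from all of them; moreover a single ball in one component of $\mathrm{spt}(T)$ cannot distinguish $T$ from a competitor agreeing with $T$ there but differing on another component. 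You flag the finiteness/separation issue yourself as the "main obstacle", but it is not a technicality that compactness plus curvature estimates repairs -- it is the reason this version of the argument cannot close, and your new-minimizer exclusion step inherits the same wrong-sign comparison.

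The paper (following Smale) avoids any enumeration or separation of competitors by perturbing in the opposite direction and with a profile adapted to $T$: it picks a regular interior point $p_j$ in \emph{each} connected component $A_j$ of the support, introduces Fermi coordinates $(t,x)$ with $t$ the signed distance to $A_j$, and uses the conformal factor $\phi_{\bar\epsilon}=1-\bar\epsilon^{\,k+1}\phi(t/\bar\epsilon)\eta(x)\le 1$, where $\phi(0)=1$ and $\phi(r)<1$ for $r\ne 0$. This \emph{decreases} the mass of $T$ by a definite amount, while any current not coinciding with $T$ in the perturbation region captures strictly less of the decrease because the profile attains its maximum only on $t=0$; hence uniqueness for $g_{\bar\epsilon}$ holds for all small $\bar\epsilon$ without knowing anything about the competitor set. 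If you rework your argument, the essential changes are: flip the sign of the conformal perturbation so that $T$ is the current being favored, give the factor a strict maximum of decrease exactly on $\mathrm{spt}(T)$ (a function of the distance $t$ rather than an arbitrary bump), and place one such perturbation on every component of $\mathrm{spt}(T)$.
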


\begin{proof}
Let $A$, $d\mu=\theta d\mathcal{H}^n$, and $\xi$ be the underlying 
rectifiable set, measure, and choice of 
orientation for the approximate tangent space of $A$
associated to the current $T$ (see Section \ref{sec:thm4}). 
We may write $A=\cup_{j=1}^N A_j$
where each $A_j$ are connected. Choose $p_j\in\mathrm{reg}(A_j)\setminus\p M$ and 
$\rho>0$ so that 
\[
\left(B_\rho(p_j)\cap A_j\right)\subset\left(\mathrm{reg}(A)\setminus\p M\right),
\quad j=1,\ldots,N.
\]
Perhaps restricting to smaller $\rho$, 
let $x=(x^1,\ldots,x^n)$ be geodesic normal coordinates for 
$B_\rho(p_j)\cap A_j$ and let $t$ be the signed distance on $B_\rho(p_j)$ from $A_j$
determined by $\xi$.
This gives Fermi coordinates $(t,x)$ on $B_\rho(p_j)$. 
Now fix a bump function $\eta:A\to[0,1]$ satisfying
\[
\eta(x)=\begin{cases}
1&\text{ for }x\in B_{\rho/2}(p_j)\cap A_j\\
0&\text{ for }x\in B_\rho(p_j)\setminus B_{3\rho/4}(p_j)
\end{cases}
\]
for each $j=1,\ldots N$. 
Also fix a smooth function $\phi:\mathbb{R}\to\mathbb{R}$ with
$\mathrm{spt}(\phi)\subset [-3/4,3/4]$,
\[
\phi(t)\geq0\text{ on }[-1,1],
\phi(0)=1,\text{ and }
\phi(r)<1\text{ if }r\neq0. 
\]
Consider the function $\phi_{\bar \epsilon}:M\to\mathbb{R}$ given by
\[
\phi_{\bar\epsilon}(y)=\begin{cases}
1-{\bar\epsilon}^{k+1}\phi(t/{\bar\epsilon})\eta(x)&\text{ if }y=(x,t)\in 
	B_\rho(p_j)\text{ for some }j\\
1&\text{ otherwise}
\end{cases}
\]
for $\bar\epsilon>0$ satisfying
$\mathrm{spt}(\phi_{\bar\epsilon})\subset\cup_{j=1}^NB_{3\rho/4}(p_j)$.
We have the perturbed metrics 
$g_{\bar\epsilon}=\phi_\epsilon^{\frac{2}{n}}g_0\in\mathcal{M}^k_0$.
It is straight-forward to show that there exists $\epsilon_1\in(0,\epsilon)$ such that, for any 
$\bar\epsilon\in(0,\epsilon_1]$, $T$ is the only $g_{\bar \epsilon}$-volume minimizing
representative of $\alpha$ (see \cite{Smale93}).
Perhaps restricting to smaller values of $\bar\epsilon,$ 
we may also arrange for $||g-g_{\bar \epsilon}||_{C^k}<\epsilon$. This 
completes the proof of Lemma \ref{lem:unique}.
\end{proof}
\begin{lemma}\label{lem:dim8}
Let $n=7$, $k\geq3$, $g_0\in\mathcal{M}^k$, and $\epsilon>0$. Suppose $T$ is the 
only $g_0$-volume minimizing representative of $\alpha$, then there exists 
$g\in\mathcal{M}^k$ such that $||g-g_0||_{C^k}<\epsilon$ and $\alpha$
may be represented (up to multiplicity) by a smooth $g$-volume minimizing hypersurface.
\end{lemma}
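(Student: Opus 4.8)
The plan is to transfer the problem to the closed doubled manifold $(M_\mathcal{D},(g_0)_\mathcal{D})$ of Appendix \ref{sec:doubling}, where the generic regularity theorem of \cite{Smale93} for $8$-dimensional manifolds applies, and to run Smale's perturbation construction equivariantly with respect to the doubling involution $\iota$, so that the perturbed metric and its minimizer descend to $M$. Let $\alpha_\mathcal{D}\in H_n(M_\mathcal{D};\mathbf{Z})$ be the class obtained by doubling $\alpha$, and let $T_\mathcal{D}$ be the double of $T$; note that $\partial T_\mathcal{D}=0$.

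First I would check that doubling preserves minimality and the uniqueness hypothesis. For any integral current $S$ representing $\alpha_\mathcal{D}$, the restrictions of $S$ to $M$ and to $-M$ are competitors for the free-boundary minimization problems defining $T$ and $\iota_*T$, so $\mathbf{M}_{(g_0)_\mathcal{D}}(S)\geq 2\mathbf{M}_{g_0}(T)=\mathbf{M}_{(g_0)_\mathcal{D}}(T_\mathcal{D})$, with equality forcing the two halves to be $T$ and $\iota_*T$ by uniqueness of $T$. Hence $T_\mathcal{D}$ is the unique $(g_0)_\mathcal{D}$-volume minimizing representative of $\alpha_\mathcal{D}$, in particular $\iota$-invariant; and since $\partial T_\mathcal{D}=0$, the interior regularity part of Theorem \ref{thm:reg}, applied in exponential charts on $M_\mathcal{D}$, shows that $\mathrm{sing}(T_\mathcal{D})$ is a finite, $\iota$-invariant set.

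Since the fixed locus of $\iota$ is $\partial M$, every singular point of $T_\mathcal{D}$ in the interior belongs to a free $\iota$-orbit $\{q,\iota q\}$. To such an orbit one applies the localized conformal perturbation of \cite{Smale93} --- built from the Hardt--Simon foliation of the area-minimizing tangent cone at $q$ by smooth minimal hypersurfaces --- in a small ball about $q$ disjoint from $\partial M$ and from its $\iota$-image, together with its $\iota$-mirror; the resulting perturbation is $\iota$-invariant, $C^k$-small, supported away from $\partial M$, and, re-applying Lemma \ref{lem:unique} as needed, keeps the minimizer unique while strictly decreasing its singular set. Since Smale's scheme terminates, iterating yields an $\iota$-invariant metric in $\mathcal{M}^k_0$, $C^k$-close to $g_0$, whose minimizer is smooth on $M_\mathcal{D}\setminus\partial M$; this metric and minimizer then descend to $M$.

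The remaining obstacle, which I expect to be the main one, is singularities on the doubling locus $\partial M$. These genuinely occur: the tangent cone of $T_\mathcal{D}$ at such a point $q$ is a minimizing hypercone invariant under the reflection $d\iota_q$ across $T_q\partial M$, and may be singular (for instance a Simons-type cone symmetric in one coordinate). A conformal perturbation localized at $q$ perturbs the metric throughout a neighborhood of $\partial M$ and destroys the product structure, so it cannot be used while staying in $\mathcal{M}^k_0$; in fact the only perturbations of $g_0$ supported near such a $q$ that preserve membership in $\mathcal{M}^k_0$ are $t$-independent perturbations of the slice metric $g_{\partial M}$. I would attempt to establish a Smale-type transversality statement within this restricted class of product-preserving, $\iota$-invariant perturbations --- using that the cross-sectional objects on the $7$-dimensional boundary have dimension at most $6$ and hence no minimizing singularities --- so that a generic such perturbation resolves every boundary singularity of the minimizer, reducing to the interior case already treated. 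Making this argument work inside the product-preserving class is the delicate point, and is where adapting \cite{Smale93} requires genuine new work.
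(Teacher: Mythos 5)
There is a genuine gap, and it sits exactly where you acknowledge it: the boundary-locus singularities. You leave these to an unproven ``Smale-type transversality statement within the restricted class of product-preserving, $\iota$-invariant perturbations,'' which you admit would require genuine new work; as written this is not a proof but a statement of the remaining problem. Moreover, the obstacle you identify is an artifact of misreading where Smale's perturbation lives. In \cite{Smale93} (and in the proof the paper gives) the conformal factor is \emph{not} localized at the singular points at all: it is supported in small balls about chosen \emph{regular interior} points $p_j\in\mathrm{reg}(A_j)\setminus\p M$, one in each connected component of the minimizer, so the perturbed metric $g_{\bar\epsilon}=u_{\bar\epsilon}^{2/n}g_0$ automatically stays in $\mathcal{M}^k_0$ and no product-preserving perturbation near $\p M$ is ever needed. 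The mechanism is then global rather than local: arguing by contradiction with $\bar\epsilon_i\to 0$ and singular $g_{\bar\epsilon_i}$-minimizers $T_i$, uniqueness of $T$ forces $T_i\rightharpoonup T$; doubling, Smale's Proposition 1.6 and Simon's strict maximum principle push the relevant sheet $Q_{i,\mathcal{D}}$ strictly to one side of $Q_\mathcal{D}$ away from the support of the perturbation; and the Hardt--Simon perturbation theorem \cite[Theorem 5.6]{HS} then says that a minimizer Hausdorff-close to, and on one side of, a minimizer with isolated singularities is smooth --- and this applies at \emph{every} isolated singular point of $Q_\mathcal{D}$, including $\iota$-invariant ones on the doubling locus. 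So the case you single out as the delicate new difficulty is handled by the same slide-off argument as the interior case, with no extra transversality input.

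Your treatment of the interior singularities also deviates from the actual mechanism in a way that leaves gaps: you describe a perturbation ``in a small ball about $q$'' (the singular point), built from the Hardt--Simon foliation of the tangent cone, and then an iteration that ``strictly decreases the singular set'' and terminates. Smale's scheme is not such an iteration; after a perturbation the minimizer is a different current and there is no monotonicity of singular sets to appeal to, so the termination claim is unsubstantiated. The correct statement, obtained from the contradiction argument above, is that for all sufficiently small $\bar\epsilon$ the unique $g_{\bar\epsilon}$-minimizer (uniqueness being arranged first via Lemma \ref{lem:unique}) is smooth everywhere at once. Your preliminary step --- doubling $T$, checking it is the unique minimizer of the doubled class and that its singular set is finite and $\iota$-invariant --- is fine and consistent with the paper, but the core of the argument as you propose it is either not carried out (boundary case) or not the argument that works as stated (interior iteration).
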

\begin{proof}
Following \cite{Smale93}, we construct a conformal factor which will
slide the minimizing current off itself in one direction and appeal to
a perturbation result for isolated singularities which allows us to
conclude that this new current has no singularity.  Write
$(M_\mathcal{D},g_{0,\mathcal{D}})$ for the doubling of $(M,g_0)$ (see
Section \ref{sec:doubling}) with involution $\iota:M_\mathcal{D}\to
M_\mathcal{D}$.  The current $T$ may also be doubled to obtain an
involution-invariant current $T_\mathcal{D}$ on $M_\mathcal{D}$.
Similarly to Section \ref{sec:doubling}, $T_\mathcal{D}$ is locally
$g_{0,\mathcal{D}}$-volume minimizing.  Let $A=\cup_{j=1}^NA_j$,
$d\mu=\theta d\mathcal{H}^7$, and $\xi$ be the underlying set,
measure, and orientation associated to $T$, as in the proof of Lemma
\ref{lem:unique}.

Let $\rho_0>0$ and fix a smooth function 
$\phi:\mathbb{R}\to\mathbb{R}$ satisfying
\begin{enumerate}
\item $\phi(-t)=-\phi(t),$
\item $\phi(t)\geq0\text{ for }t\geq0,$
\item $\phi(t)=t\text{ for }t\in[0,\frac{\rho_0}{4}],$
\item $\phi(t)=\frac{\rho_0}{2}\text{ for }t\in[\frac{\rho_0}{2},\frac{3\rho_0}{4}],$
\item $\phi(t)=0\text{ for }t\geq\rho_0.$
\end{enumerate}

Let $\{B_\rho(p_j)\}_{j=1}^N$ be a collection of disjoint metric balls in $\mathring{M}$
centered at regular points $p_j\in A_j$.
Choose $\rho_0>0$ small enough to ensure that, in Fermi coordinates $(t,x)$
for $A_j$ with $\xi$ pointing into the side corresponding to $t>0$, the function
$(t,x)\mapsto\phi(t)\eta(x)$ is supported in $\cup_{j=1}^NB_\rho(p_j)$. 
For a fixed $s\in(0,1)$ and a parameter $\bar \epsilon\in(0,1)$, consider the 
functions $u_{\bar \epsilon}:\Sigma\to\mathbb{R}$ given by
\[
u_{\bar \epsilon}(y)=\begin{cases}
1-\bar{\epsilon}^s\phi(t)\eta(x)&\text{ if }y=(t,x)\in\cup_{j=1}^NB_\rho(p_j)\\
1&\text{ otherwise.}
\end{cases}
\]
The conformal metrics $g_{\bar \epsilon}=u_{\bar \epsilon}^{\frac{2}{n}}g_0$
will be used to find the desired smooth representative. Since $g_{\bar\epsilon}$
splits as a product near $\p M$, we may consider 
the corresponding $\iota$-invariant metric $g_{\bar\epsilon,\mathcal{D}}$ on 
$M_\mathcal{D}$.

For sake of contradiction, suppose that there is a sequence 
$\bar \epsilon_i\to0$ and homologically $g_{\bar \epsilon_i}$-volume minimizing
currents $T_i$ representing $\alpha$ with $\mathrm{sing}(T_i)\neq\emptyset$
for all $i=1,2,\ldots.$
Since $\mathbf{M}(T_i)$ is uniformly bounded in $i$, 
$T_i$ weakly converges to some homologically $g_0$-volume
minimizing current $T_\infty$ which also represents $\alpha$. 
Since $T$ is assumed to be the unique such current, we must have $T_\infty=T$.
Write $P_i$, $d\mu_i$, and $\xi_i$ for the set, measure, and orientation
corresponding to $T_i$ for $i=1,2,\ldots$.
Let $Q_i$ be a connected component of $P_i$ with 
$\mathrm{sing}(Q_i)\neq\emptyset$ for each $i=1,2,\ldots$.
Now $Q_i$ converges in the Hausdorff sense to some sheet 
$Q$ of $T$. By the Allard regularity theorem \cite{All},
this convergence is smooth away from $\mathrm{sing}(Q)$.
Hence, after passing to a subsequence, $y_i$ converges to 
some $y\in \mathrm{sing}(Q)$.

In terms of the doubled manifold, the $\iota$-invariant currents
$T_{i,\mathcal{D}}$ are homologically
$g_{\bar\epsilon_i,\mathcal{D}}$-volume minimizing, $T_{i,\mathcal{D}}$
weakly converge to $T_{0,\mathcal{D}}$, and the doubled sets $Q_{i,\mathcal{D}}$
converge to $Q_{\mathcal{D}}$ smoothly away from $\mathrm{sing}(Q_\mathcal{D})$.
Now let $\mathcal{N}\subset M_\mathcal{D}$ be a small distance neighborhood of 
$Q_\mathcal{D}$ 
so that $\mathcal{N}\setminus Q_\mathcal{D}$ consists of two disjoint, open 
sets $\mathcal{N}_-$ and $\mathcal{N}_+$ on which the signed distance to
$Q_\mathcal{D}$ is negative and positive, respectively. 
In the doubled manifold, we may directly apply the following results from
\cite{Smale93}.
\begin{lemma}\label{lem:dim8reg}\cite[Proposition 1.6]{Smale93}
For large $i$, we have
\begin{enumerate}
\item $Q_{i,\mathcal{D}}\cap\mathcal{N}_-=\emptyset$
\item $Q_{i,\mathcal{D}}\cap\mathcal{N}_+\setminus
	\mathrm{spt}(\phi_{\epsilon_i}\eta)_\mathcal{D}\neq\emptyset$.
\end{enumerate}
\end{lemma}
In light of Lemma \ref{lem:dim8reg},
the Simon maximum principle \cite{Si87} shows  
\[
\left(Q_{i,\mathcal{D}}\setminus \mathrm{spt}(\phi_i\eta)_\mathcal{D}\right)\subset
\left(\mathcal{N}_+\setminus \mathrm{spt}(\phi_i\eta)_\mathcal{D}\right)
\]
for each $i=1,2,\ldots$.
Recalling that $Q_{i,\mathcal{D}}$ converges to $Q_\mathcal{D}$ in the Hausdorff 
distance, we may apply the perturbation result \cite[Theorem 5.6]{HS} to
conclude that $Q_{i,\mathcal{D}}$ is smooth for sufficiently large $i$. This
contradiction finishes the proof of Lemma \ref{lem:dim8}.
\end{proof}
Theorem \ref{thm:dim8} follows by first applying Lemma \ref{lem:unique} 
to approximate $g_0$ with a metric $g_1$ supporting a unique minimizing 
representative of $\alpha$ then applying Lemma \ref{lem:dim8} to approximate $g_1$
with a metric $g_2$ and obtain a $g_2$-volume minimizing representative of $\alpha$.
\begin{proof}[Proof of Theorem \ref{thm5} for $n=7$]
We will closely follow the argument presented in Section \ref{sec4}.
Let  $(Z,\bar g,\bar \gamma):
(Y_0,g_0,\gamma_0) \rightsquigarrow (Y_1,g_1,\gamma_1)$ be a psc-bordism
and let $(Z_i,\bar g_i,\bar \gamma_i)$ be the corresponding $i$-collaring 
for $i=1,2,\ldots$. As usual, we denote by $\bar \alpha_i\in H_7(Z_i,\p Z_i;\mathbb{Z})$
the Poincar\'e dual to $\bar \gamma_i$.

For each $i=1,2,\ldots$, we apply Theorem \ref{thm:dim8}
to obtain a metric $\hat g_i$ on $Z_i$ so that
\[
||\hat g_i-\bar g_i||_{C^i_{\bar g_i}}\leq\frac{1}{i}
\]
and $\bar \alpha_i$ can be represented by a smooth $\hat g_i$-volume minimizing 
hypersurface $W_i$. It follows from the proofs of
Lemmas \ref{lem:unique} and \ref{lem:dim8} that $\hat g_i$ can and will be
chosen so that $\{\hat g_i\neq \bar g_i\}\subset M_1\subset M_i$ for $i=1,2,\ldots$.
Indeed, the perturbations required to form $\hat g_i$
are supported on balls centered about chosen regular points of
$\bar g_i$-volume minimizing currents and
one can always find regular points of minimizers of $\bar \alpha_i$ in $M_1\subset M_i$.
Evidently, $\hat g_i$ has positive scalar curvature for all
sufficiently large $i$. 
Since $\hat g_i=\bar g_i$ on $Y\times[-i,0]\subset Z_i$, 
the proof of the Main Lemma shows that there is a subconvergence
\[
(Z_i,W_i,\hat g_i,\mathsf{S}_i)\rightarrow
(Y\times(-\infty,0],X\times(-\infty,0],g+dt^2,\mathsf{S})
\]
where $Y, X, g, \mathsf{S}_i$, and $\mathsf{S}_\infty$ are defined as in Section \ref{sec4}.
One can now directly apply the argument from \ref{sec:longcollars} to finish the proof 
of Theorem \ref{thm5} for $n=7$.
\end{proof}

\bibliographystyle{elsarticle-num}

\end{document}